\newtheorem{thm}{Theorem}[section]
\newtheorem{prop}[thm]{Proposition}
\newtheorem{lem}[thm]{Lemma}
\newtheorem{cor}[thm]{Corollary}
\theoremstyle{definition}
\newtheorem{defn}[thm]{Definition}
\newtheorem{rem}[thm]{Remark}
\newtheorem{exmp}[thm]{Example}
\newtheorem*{construction}{Construction}
\renewcommand{\bar}[1]{\overline{#1}}
\newcommand{\boundary}{\partial}
\newcommand{\set}[2]{\{\,{#1} \mid {#2} \,\}}
\newcommand{\bigset}[2]{ \bigl\{ \, {#1} \bigm| {#2} \, \bigr\} }
\renewcommand{\emptyset}{\varnothing}
\newcommand{\tits}{\boundary_{\text{T}}}
\newcommand{\field}[1]{\mathbb{#1}}
\newcommand{\E}{\field{E}}
\newcommand{\Q}{\field{Q}}
\newcommand{\F}{\mathcal{F}}
\renewcommand{\P}{\field{P}}
\newcommand{\of}{\circ}
\renewcommand{\hat}{\widehat}
\DeclareMathOperator{\CAT}{CAT}
\DeclareMathOperator{\Stab}{Stab}
\DeclareMathOperator{\diam}{diam}
\newcommand{\nbd}[2]{\mathcal{N}_{#2}({#1})}  
\newcommand{\bignbd}[2]{\mathcal{N}_{#2} \bigl( {#1} \bigr)}
\newcommand{\Swiatkowski}{{\'{S}}wi{\k{a}}tkowski}
\newcommand{\showcomments}{yes}
\newsavebox{\commentbox}
\begin{document}

\title[Connectedness properties and isolated flats]{%
Connectedness properties and
splittings of groups with isolated flats}

\author[G.C.~Hruska]{G.~Christopher Hruska}
\address{Department of Mathematical Sciences\\
         University of Wisconsin--Milwaukee\\
         PO Box 413\\
         Milwaukee, WI 53211\\
	 USA}
\email{chruska@uwm.edu}

\author[K.~Ruane]{Kim Ruane}
\address{Department of Mathematics\\
         Tufts University\\
         Medford, MA 02155\\
	 USA}
\email{kim.ruane@tufts.edu}

\begin{abstract}
In this paper we study $\CAT(0)$ groups and their splittings as graphs of groups.
For one-ended $\CAT(0)$ groups with isolated flats we prove a 
theorem characterizing exactly
when the visual boundary is locally connected. This characterization depends on whether the group has a certain type of splitting over a virtually abelian subgroup.
In the locally connected case, we describe the boundary as a tree of metric spaces in the sense of \Swiatkowski.

A significant tool used in the proofs of the above results is a general convex splitting theorem for arbitrary $\CAT(0)$ groups.
If a $\CAT(0)$ group splits as a graph of groups with convex edge groups, then the vertex groups are also $\CAT(0)$ groups.
\end{abstract}

\keywords{Nonpositive curvature, isolated flats, locally connected, tree of metric compacta}

\subjclass[2010]{%
20F67, 
20E08} 

\date{\today}

\maketitle

\section{Introduction}
\label{sec:Introduction}

A major theme of geometric group theory over the last few decades has been the study of a group using various boundaries at infinity attached to spaces on which the group acts.  There has been a fruitful connection between topological properties of these boundaries and algebraic properties of the group.  Our main theorem provides another example of such a connection for the groups that act geometrically on $\CAT(0)$ spaces with isolated flats.

This article is concerned with the following problem:
describe the topological spaces that can arise as the boundary of a $\CAT(0)$ space with isolated flats that admits a geometric group action.  Kapovich--Kleiner posed an analogous question for word hyperbolic groups in \cite{KapovichKleiner00}.
In order to describe boundaries, one first needs to know which topological properties boundaries can have.

If one can prove that the boundary is connected and locally connected, then it is a Peano continuum. This is a class of compact metric spaces with a rich structure theory that is substantially more powerful than general continuum theory.  Much of this paper focuses on determining which groups can have such a boundary.

In the hyperbolic setting, all connected boundaries are locally connected, by a deep, nontrivial theorem (see \cite{BestvinaMess91,Levitt98,Bowditch99Treelike,Swarup96}) with far reaching consequences such as \cite{Bowditch98CutPoints,KapovichKleiner00}.
Groups acting on $\CAT(0)$ spaces with isolated flats \cite{KapovichLeeb95,HruskaKleinerIsolated}  are, in a sense, the simplest generalization of hyperbolicity in the $\CAT(0)$ setting.  By Hruska--Kleiner, a $\CAT(0)$ group with isolated flats is relatively hyperbolic with respect to the collection $\mathbb{P}$ of maximal virtually abelian subgroups of higher rank \cite{HruskaKleinerIsolated}.  A virtually abelian group has \emph{higher rank} if its rank over $\mathbb{Q}$ is at least two.

By a theorem of Mihalik--Ruane \cite{MihalikRuane99,MihalikRuane01}, many one-ended $\CAT(0)$ groups with isolated flats have connected but non--locally connected boundary.  These examples occur when the group exhibits a particular kind of splitting as a graph of groups. 
A simple example of the non--locally connected type is the fundamental group of the space obtained from a closed genus two surface and a torus by gluing together an essential simple closed curve from each surface (see Example~\ref{exmp:SurfaceAmalgam}).

In the general $\CAT(0)$ setting, determining local connectedness seems to be quite a delicate issue.  One-ended hyperbolic groups and fundamental groups of closed Hadamard manifolds all have locally connected boundary, while all $\CAT(0)$ boundaries of one-ended nonabelian right-angled Artin groups are non--locally connected.
Very few natural classes of $\CAT(0)$ groups are known to admit boundaries of both types, and among those none had been completely characterized in terms of local connectivity.
Indeed this question is not even completely understood for right-angled Coxeter groups, a family that includes many groups with both types of boundary.

Let $G$ act geometrically on a one-ended $\CAT(0)$ space $X$
with isolated flats.
Our main theorem determines exactly when $G$ has locally connected boundary.

\begin{thm}[Locally connected]
\label{thm:MainThm}
Let $G$ be a one-ended $\CAT(0)$ group with isolated flats.
The boundary $\boundary G$ is non--locally connected if and only if 
$G$ contains a pair of virtually abelian subgroups $B < A$ with the following properties:
\begin{enumerate}
\item $G$ splits \textup{(}nontrivially\textup{)} over $B$,
\item $A$ has higher rank, and
\item The $\Q$--rank of $B$ is strictly less than the $\Q$--rank of $A$.
\end{enumerate}
\end{thm}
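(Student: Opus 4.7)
The plan is to prove the two implications by quite different methods. The reverse direction ($\Leftarrow$) amounts to performing the Mihalik--Ruane construction in the CAT(0)-with-isolated-flats setting: given the algebraic data, exhibit a point of $\partial X$ at which local connectedness fails. The forward direction ($\Rightarrow$) is the main new content and proceeds by extracting an algebraic splitting from the topological obstruction to local connectedness, in the spirit of Bowditch's JSJ for relatively hyperbolic groups.

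For the reverse direction, assume virtually abelian $B < A$ satisfy the three conditions. The subgroup $A$ stabilizes a maximal flat $F_A \subset X$ of dimension equal to the $\Q$--rank of $A$, contributing a sphere $\partial F_A$ of positive dimension to $\partial X$. The subgroup $B$ stabilizes a subflat $F_B \subset F_A$, and $\partial F_B$ is a proper subsphere of strictly smaller dimension. The splitting of $G$ over $B$ induces, via Bass--Serre theory together with the convex splitting theorem stated in the abstract, a tree-of-spaces decomposition of $X$ in which distinct translates of $F_A$ lie in distinct vertex spaces and meet only along translates of $F_B$. Choosing $\xi \in \partial F_A \setminus \partial F_B$, one argues that any small connected neighborhood of $\xi$ in $\partial X$ is forced to separate into components corresponding to different directions in the Bass--Serre tree, with the obstruction concentrated along $\partial F_B$.

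For the forward direction, suppose $\partial X$ is not locally connected. The first step is to locate the failure: since $G$ is relatively hyperbolic with respect to the stabilizers of maximal flats (Hruska--Kleiner), adaptations of the one-ended hyperbolic results of Bestvina--Mess, Swarup, and Bowditch show that the ``hyperbolic part'' of $\partial X$ (away from flat boundaries) is locally connected. Therefore any point $\xi$ witnessing non-local-connectedness must lie in $\partial F_A$ for some maximal flat $F_A$ with higher-rank stabilizer $A$. Since $\partial F_A$ is itself a sphere, the failure must come from the way $\partial F_A$ is attached to the rest of $\partial X$: punctured neighborhoods of $\xi$ split into several components. Analyzing this cut structure with the help of the peripheral geometry of isolated flats, I would identify a proper subsphere $\partial F_B \subset \partial F_A$ that separates a neighborhood of $\xi$ from the remainder of $\partial X$, and whose stabilizer $B$ is a virtually abelian subgroup of $A$ of strictly lower $\Q$--rank. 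A Bowditch-style JSJ argument applied to this codimension-one cut in the relatively hyperbolic setting then yields the desired splitting of $G$ over $B$.

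The hardest step will be the last one: passing from a topological cut in $\partial X$ to an honest graph-of-groups splitting of $G$ with edge group commensurable with $B$. One must build an equivariant tree (or codimension-one wall system) in $X$ whose separating hypersurfaces are modeled on translates of $F_B$, show that the action is cocompact on edges, and identify edge stabilizers. Here the convex splitting theorem plays a dual role: it geometrizes candidate algebraic splittings as tree-of-spaces decompositions with CAT(0) vertex spaces, and, going the other way, it guarantees that the splitting produced from the boundary cut actually sits compatibly inside the CAT(0) geometry of $X$, so that the resulting tree-of-metric-spaces description of $\partial X$ (in the Świątkowski sense promised in the abstract) is available whenever local connectedness does hold.
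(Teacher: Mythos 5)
Your reverse direction matches the paper in spirit: the hypotheses on $B<A$ are exactly what is needed to invoke the Mihalik--Ruane non--local-connectedness criterion (their index conditions, an element $s\in A-B$ with $sBs^{-1}\subseteq B$ and no power in $B$, and quasiconvexity of the $B$--orbit from the Flat Torus Theorem), and the paper simply cites that result rather than redoing the construction. One inaccuracy in your picture: $A$ need not be elliptic in the Bass--Serre tree of the splitting over $B$ (e.g.\ the stable letter of an HNN extension may lie in $A$), so it is not true in general that translates of the flat stabilized by $A$ sit inside single vertex spaces and meet only along translates of the subflat for $B$.

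The forward direction, however, has a genuine gap, and it is exactly the step you flag as hardest. You propose to argue directly: locate a failure point $\xi$ in some $\partial F_A$, extract a separating subsphere $\partial F_B$, and then convert this ``codimension-one cut'' into a splitting of $G$ over $B$ by a ``Bowditch-style JSJ argument.'' No such conversion is available: the existing cut-point/cut-pair machinery (Bowditch, Papasoglu--Swenson) does not produce splittings over higher-rank virtually abelian subgroups from higher-dimensional cut spheres in a $\CAT(0)$ boundary, and at best yields $\R$--trees whose edge stabilizers you would still have to control; you give no construction of the equivariant wall system, no cocompactness, and no identification of $B$. The paper sidesteps this entirely by proving the contrapositive: if $G$ has no splitting of the stated type, then Bowditch's accessibility theorem gives a \emph{maximal peripheral splitting} which is locally finite; the Convex Splitting Theorem (Theorem~\ref{thm:ConvexVertexGroup}) makes each component vertex group a $\CAT(0)$ group with isolated flats and no peripheral splittings; Theorem~\ref{thm:UnsplittableLC} then gives local connectedness of each vertex boundary --- at rank-one points by pulling local connectedness back from the Bowditch boundary along an upper semicontinuous (null-family) quotient, a point your sketch glosses over since local connectedness does not pull back along arbitrary quotient maps, and at flat points by the Haulmark-style transfer between $F$ and $\partial X-\partial F$ --- and finally $\partial X$ is identified with the limit of a \Swiatkowski\ tree system over the Bass--Serre tree (Proposition~\ref{prop:TreeDecomposition}), whose local connectedness follows from an inverse-limit argument (Theorem~\ref{thm:TreeLimLC}). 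As written, your plan for the forward implication cannot be completed without proving a new splitting theorem from boundary cut data, which is not supplied.
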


The reverse implication of Theorem~\ref{thm:MainThm} follows immediately from the Mihalik--Ruane splitting theorem.  This article concerns the forward implication of Theorem~\ref{thm:MainThm}, which was not known previously.

A compactum is \emph{semistable} if it is shape-theoretically equivalent to a locally connected continuum \cite{GeogheganSwenson_Semistable}.
A well-known conjecture of Geoghegan--Mihalik would imply that every boundary of every one-ended $\CAT(0)$ group is semistable \cite{Mihalik83,GeogheganSwenson_Semistable}.
This conjecture has been proven in the $\CAT(0)$ with isolated flats setting by combining work of Mihalik--Swenson and the authors \cite{MihalikSwenson,HruskaRuaneHierarchies}.
In other words, all torsion-free one-ended $\CAT(0)$ groups with isolated flats have semistable boundary.
Thus the conclusion of Theorem~\ref{thm:MainThm} gives finer information than one could derive from semistability alone.
When considered up to shape equivalence, all isolated flats boundaries are equivalent to Peano continua, but when considered up to homeomorphism it turns out that many are not Peano continua.
In a sense, semistability is too weak to see the precise homeomorphism type of a given boundary.

The conclusion of Theorem~\ref{thm:MainThm} has already been used as an essential ingredient in constructing examples of non-hyperbolic $\CAT(0)$ groups with boundary homeomorphic to the Menger curve \cite{HaulmarkCAT0,HaulmarkHruskaSathaye_Menger,DaniHaulmarkWalsh_Nonplanar}, which were not previously known to exist.
Although groups with Menger boundary in the hyperbolic setting are well-known \cite{Benakli92_thesis,Champetier95,Bourdon97,KapovichKleiner00,DahmaniGuirardelPrzytycki11}, substantially different techniques are needed in the non-hyperbolic setting.

In the locally connected case we also obtain a detailed description of the boundary of $G$ as the tree of metric compacta in the sense of \Swiatkowski\ \cite{SwiatkowskiTreesOfCompacta}.  This structure explicitly determines the boundary as a tree of spaces built in the same fractal manner as the trees of manifolds of Jakobsche and Ancel--Siebenmann \cite{Jakobsche80,AncelSiebenmann85,Jakobsche91}, which generalize the classical Pontryagin mod-$2$ surface \cite{Pontryagin30}.

The building blocks in this construction are boundaries of vertex groups in a natural graph of groups splitting of $G$.  These vertex groups are atomic in the following sense.
Suppose $G$ is a one-ended $\CAT(0)$ group with isolated flats.
Let $\mathcal{A}$ be the family of subgroups of $G$ that are contained in higher rank virtually abelian subgroups.
We say that $G$ is \emph{atomic} if $G$ does not split over any subgroup in $\mathcal{A}$.
The methods used in the proof of Theorem~\ref{thm:MainThm} also give the following theorem explicitly describing the topology of the boundary.

\begin{thm}[Tree of spaces]
\label{thm:TreeOfSpaces}
Let $G$ be a one-ended $\CAT(0)$ group $G$ with isolated flats.  If $\boundary G$ is locally connected, then $G$ is the fundamental group of a graph of groups such that all vertex groups are atomic $\CAT(0)$ groups with isolated flats, and all edge groups are higher rank virtually abelian.  Furthermore $\boundary G$ is homeomorphic to a tree of metric compacta where the compacta are the boundaries of the vertex groups of the splitting.
\end{thm}

As suggested by \Swiatkowski, the structure of a tree of metric compacta may prove useful in classifying which topological spaces arise as visual boundaries of $\CAT(0)$ groups.

In a general setting, attempts to understand a group by splitting into indecomposable pieces often require a study of hierarchies of splittings (as in the settings of one-relator groups, $3$--manifold groups, cubulated groups, etc.).
For example in \cite{HruskaRuaneHierarchies}, the authors prove semistability in the isolated flats setting using a nontrivial theorem of Louder--Touikan \cite{LouderTouikan17} on the termination of slender hierarchies of relatively hyperbolic groups.  The study of hierarchies is technically much more elaborate than the study of a single splitting.

In contrast with \cite{HruskaRuaneHierarchies}, in the locally connected case of Theorem~\ref{thm:TreeOfSpaces} it is notable that we reach atomic vertex groups after only a single splitting.
We show that the hierarchy terminates after one splitting in Theorem~\ref{thm:BowditchComponent}.
In Corollary~\ref{cor:ComponentCAT0} we show that the resulting vertex groups are $\CAT(0)$ with isolated flats.

\subsection{Methods of proof}
\label{subsec:Methods}

As mentioned above, a group $G$ acting properly, cocompactly on a $\CAT(0)$ space with isolated flats is hyperbolic relative to the family $\mathbb{P}$ of all maximal virtually abelian subgroups of higher rank; ie, rank at least two.
Bowditch introduced a boundary associated to the pair $(G,\mathbb{P})$, now known as the \emph{Bowditch boundary} $\partial (G,\mathbb{P})$.

In the isolated flats setting, the Bowditch boundary is always locally connected \cite{Bowditch01}.
Theorem~\ref{thm:MainThm} does not follow directly from Bowditch's theorem for the following reason.
Hung Cong Tran has shown that the Bowditch boundary is a quotient space of the $\CAT(0)$ boundary \cite{Tran13}.
It is well-known that the continuous image of a locally connected space is also locally connected.  However to prove Theorem~\ref{thm:MainThm} we would need the converse, which is simply not true in general.

The proof in \cite{Bowditch01} that Bowditch's boundary is locally connected involves three main steps.  The first step gives the existence of a maximal peripheral splitting of $G$ (see Section~\ref{sec:PeripheralSplittings}). The vertex groups of this peripheral splitting are relatively hyperbolic groups that admit only a trivial peripheral splitting.  The second step involves showing that groups for which this splitting is trivial have locally connected Bowditch boundary. In the third step Bowditch shows that the boundary is composed of many copies of boundaries of the vertex groups glued along points in the pattern of the Bass--Serre tree.

Our proof of the forward direction of Theorem~\ref{thm:MainThm} is via a contrapositive argument.  We show that if $G$ does not have an ``infinite index'' splitting as in the statement of the theorem, then the $\CAT(0)$ boundary of $G$ is locally connected.
This proof follows a similar outline as in Bowditch's proof.  But the details of each step are substantially different from Bowditch's methods.

The first step begins by examining Bowditch's maximal peripheral splitting.  We can use this splitting result since our groups are relatively hyperbolic.  
However we need the much stronger conclusions that the vertex groups of Bowditch's splitting are atomic
and again $\CAT(0)$ with isolated flats.
The algebraic assumption of no infinite index splitting gives atomic vertex groups (see Theorem~\ref{thm:BowditchComponent}).

In order to conclude that the vertex groups are $\CAT(0)$ groups, we introduce the following ``Convex Splitting Theorem.''
We note that this theorem does not involve the notion of isolated flats, and therefore could prove useful in other $\CAT(0)$ situations.
Suppose $G$ acts geometrically on any $\CAT(0)$ space $X$. A subgroup $H \le G$ is \emph{convex} if $H$ stabilizes a closed convex subspace $Y$ of $X$, and $H$ acts cocompactly on $Y$.

\begin{thm}[Convex Splitting Theorem]
\label{thm:ConvexVertexGroup}
Let $G$ act geometrically on any $\CAT(0)$ space $X$.
Suppose $G$ splits as the fundamental group of a graph of groups $\mathcal{G}$ such that each edge group of $\mathcal{G}$ is convex.
Then each vertex group is also convex.
In particular, each vertex group is a $\CAT(0)$ group itself.
\end{thm}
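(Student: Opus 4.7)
My plan is to use the Bass--Serre tree $T$ of the splitting $\mathcal{G}$ to assemble the given edge subspaces into convex subspaces for the vertex groups. Recall that $G$ acts on $T$ with finite quotient, vertex stabilizers being conjugates of the vertex groups of $\mathcal{G}$ and edge stabilizers being conjugates of the edge groups. By the convexity hypothesis, for each edge $\tilde{e}$ of $T$ there exists a closed convex subspace $Y_{\tilde{e}} \subseteq X$ on which the edge stabilizer $G_{\tilde{e}}$ acts cocompactly; by picking one such subspace per $G$-orbit of edges and then setting $Y_{g\tilde{e}} := g Y_{\tilde{e}}$, we obtain a $G$-equivariant family $\{Y_{\tilde{e}}\}$ indexed by the edges of $T$ (well-definedness uses that $G_{\tilde{e}}$ already preserves $Y_{\tilde{e}}$).

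For each vertex $\tilde{v}$ of $T$, let $Z_{\tilde{v}} := \bigcup_{\tilde{e} \ni \tilde{v}} Y_{\tilde{e}}$ and take as candidate vertex subspace the closed convex hull
\[
Y_{\tilde{v}} \;:=\; \Hull(Z_{\tilde{v}}).
\]
This is closed and convex by construction, and it is $G_{\tilde{v}}$-invariant because $G_{\tilde{v}}$ permutes the edges incident to $\tilde{v}$. Since $\mathcal{G}$ is a finite graph, $G_{\tilde{v}}$ has only finitely many orbits on edges at $\tilde{v}$ and the corresponding stabilizer acts cocompactly on each $Y_{\tilde{e}}$, so $G_{\tilde{v}}$ already acts cocompactly on $Z_{\tilde{v}}$. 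The induced $G_{\tilde{v}}$-action on $Y_{\tilde{v}}$ is proper because the ambient $G$-action on $X$ is, and $X$ is locally compact. Hence to conclude that $G_{\tilde{v}}$ is convex it is enough to prove that $Y_{\tilde{v}}$ lies in a bounded neighborhood of $Z_{\tilde{v}}$: for then $Y_{\tilde{v}} \subseteq G_{\tilde{v}} \cdot \bignbd{K}{R}$ for some compact $K$ and some $R>0$, and properness together with local compactness yields cocompactness.

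The principal obstacle will be this bounded-neighborhood step, because in a general $\CAT(0)$ space the closed convex hull of a union of convex subspaces can have infinite Hausdorff distance from that union; so the tree structure of $\mathcal{G}$ must enter essentially. My intended approach is to construct a $G$-equivariant coarse projection $\pi \colon X \to T$ built from nearest-point projections onto the $Y_{\tilde{e}}$, and then to show that the iterated geodesic segments that build up $\Hull(Z_{\tilde{v}})$ remain in $\pi^{-1}(\tilde{v})$. Cocompactness of $G$ on $X$ would then supply a uniform constant $D$ such that every $x \in \pi^{-1}(\tilde{v})$ satisfies $d(x, Y_{\tilde{e}}) \le D$ for some $\tilde{e}$ incident to $\tilde{v}$, yielding the claim. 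As a fallback I would induct on the number of edges of $\mathcal{G}$, reducing to the atomic amalgam $A \ast_C B$ and HNN $A\ast_C$ cases, where $T$ has a single $G$-orbit of edges and the geometry of a single family of disjoint convex translates of $Y_C$ is easier to analyze directly: one tries to show that any geodesic segment joining two translates $aY_C$ and $a'Y_C$ at $\tilde{v}$ has both endpoints in $A \cdot Y_C$ and, thanks to $C$-cocompactness on $Y_C$, stays within a uniformly bounded neighborhood of $A \cdot Y_C$.
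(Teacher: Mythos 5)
Your overall frame (pass to the Bass--Serre tree, produce a closed convex $G_{\tilde v}$--cocompact subspace, mediate via an equivariant projection $X \to T$) is the same as the paper's, but the two steps you defer are exactly the content of the theorem, and the specific route you sketch for them does not go through. First, the containment $\Hull(Z_{\tilde v}) \subseteq \pi^{-1}(\tilde v)$ cannot hold as stated: each edge space $Y_{\tilde e}$ straddles whatever ``wall'' separates the two sides of $\tilde e$, so the geodesics building up the hull of $Z_{\tilde v}$ will make excursions across the walls into neighboring regions, and controlling those excursions is precisely the hard point --- it is not a consequence of how the hull is constructed. Second, the assertion that cocompactness of $G$ on $X$ alone yields a uniform $D$ with every $x \in \pi^{-1}(\tilde v)$ within $D$ of some incident $Y_{\tilde e}$ is a leap: what it actually requires is that $G_{\tilde v}$ act cocompactly on $\pi^{-1}(\tilde v)$, which in turn requires $\pi$ to have suitably locally finite point preimages so that a pigeonhole argument applies; moreover your proposed $\pi$, assembled from nearest-point projections onto the $Y_{\tilde e}$, need not even be well defined, since nothing in the hypotheses prevents distinct translates of edge spaces from intersecting or coming arbitrarily close. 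The fallback reduction to $A \ast_C B$ only relocates these difficulties; it does not remove them.

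The paper resolves both issues by changing the candidate subspace so that no convex hull of a union of far-apart convex sets ever appears. Using Ontaneda's theorem it produces a genuinely continuous $G$--equivariant map $\pi \colon X \to T$; the wall $Q(e) = \pi^{-1}(m_e)$ over each edge midpoint is closed, the family of walls is locally finite, and the pigeonhole principle for cocompact actions gives that $G_e$ acts cocompactly on $Q(e)$ and $G_v$ acts cocompactly on $Q(v) = \pi^{-1}\bigl(S(v)\bigr)$, the preimage of the star of $v$. Convexity of the edge group is used only once: since $G_e$ acts cocompactly on both $Q(e)$ and the given convex set $Y_e$, the wall $Q(e)$ lies in a bounded (hence convex) neighborhood of $Y_e$, so the closed convex hull $C(e)$ of the single wall is still $G_e$--cocompact. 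The vertex subspace is then $C(v) = Q(v) \cup \bigcup_{e \ni v} C(e)$, and its convexity is proved by a separation/case analysis: any excursion of a geodesic outside $Q(v)$ has both endpoints on some wall $Q(e)$ and therefore stays in the convex set $C(e)$. If you wish to keep your candidate $\Hull(Z_{\tilde v})$, note that a posteriori it lies in a bounded convex neighborhood of $C(v)$ and is therefore indeed $G_{\tilde v}$--cocompact --- but establishing that requires the paper's construction first, so the hull offers no shortcut.
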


To see that this theorem is not obvious, recall first that a similar result for hyperbolic groups and quasiconvex subgroups is well-known and very straightforward to prove (see, for example \cite[Prop.~1.2]{Bowditch98CutPoints}).
The basic strategy in the hyperbolic case is to map the Cayley graph of $G$ to the Bass--Serre tree $T$ and to use this map to cut the Cayley graph into pieces that one proves are quasiconvex.

However in the $\CAT(0)$ setting, it is not obvious how to map $X$ to the tree.  To do this, we rely on a powerful theorem of Ontaneda that produces an equivariant simplicial nerve for a $\CAT(0)$ space $X$ with a cocompact group action \cite{Ontaneda05}.
We map $X$ to the tree $T$ using this nerve, and use this map to cut $X$ into pieces (typically not convex).
To complete the proof of Theorem~\ref{thm:ConvexVertexGroup}, much more care is needed than in the hyperbolic case since convexity is more delicate to establish than quasiconvexity. 

The second step of the proof of Theorem~\ref{thm:MainThm} is to prove the following special case involving atomic $\CAT(0)$ groups with isolated flats.  

\begin{thm}
\label{thm:UnsplittableLC}
Let $X$ be a $\CAT(0)$ space with isolated flats that admits a geometric group action by a group $G$.  Suppose $G$ is atomic.
Then the $\CAT(0)$ boundary $\boundary X$ is locally connected.
\end{thm}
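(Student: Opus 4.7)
The plan is to exploit the relatively hyperbolic structure of $G$, invoke Bowditch's local connectedness theorem for Bowditch boundaries, and transfer local connectedness back to $\boundary X$ through the natural quotient that collapses flat boundary spheres. By the theorem of Hruska--Kleiner, $G$ is hyperbolic relative to the collection $\mathcal{P}$ of conjugacy class representatives of maximal higher-rank virtually abelian subgroups (equivalently, the stabilizers of maximal flats). Let $\boundary_B G$ denote the Bowditch boundary and let $\pi \colon \boundary X \to \boundary_B G$ be the continuous $G$-equivariant surjection that collapses each flat boundary sphere $\boundary F$ to its associated parabolic point and is a homeomorphism elsewhere. Since $X$ is one-ended, $\boundary X$, and hence $\boundary_B G$, is connected. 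Stallings' theorem rules out splittings of $G$ over finite subgroups, and the hypothesis that $G$ admits no splitting over any subgroup in $\mathcal{A}$ rules out splittings of $G$ over any subgroup of a peripheral subgroup. Bowditch's theorems on peripheral splittings and on connectedness of limit sets of relatively hyperbolic groups then imply that $\boundary_B G$ contains no global cut points and is locally connected.

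The core of the proof is then to transfer local connectedness across $\pi$. Fix $\xi \in \boundary X$ and a visual open neighborhood $U$ of $\xi$; we produce a connected open neighborhood $V \subseteq U$ of $\xi$. If $\xi$ lies in no flat boundary sphere, then $\pi$ restricts to a homeomorphism of some neighborhood of $\xi$ onto a neighborhood of $\pi(\xi)$, and a connected neighborhood of $\pi(\xi)$ in $\boundary_B G$ pulls back to the desired $V$. If instead $\xi$ lies in a flat boundary sphere $\boundary F$, we combine a small connected neighborhood $D$ of $\xi$ inside the locally connected sphere $\boundary F$ with the intersection of $U$ with the preimage under $\pi$ of a small connected neighborhood of the parabolic point $\pi(\xi)$ in $\boundary_B G$. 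The resulting set will be shown to be connected by using that $D$ is connected and that every component of the preimage outside $\boundary F$ accumulates on $\boundary F$, so that the various pieces are tied together through the flat sphere.

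The main obstacle is the parabolic case: local connectedness does not lift through arbitrary closed quotient maps with connected, locally connected fibers, as the topologist's sine curve shows. The geometric input needed to overcome this is the isolation of flats. Within any bounded window of $F$, only finitely many other maximal flats come close, so a sequence of directions in $\boundary X$ converging to $\xi$ from outside $\boundary F$ can be controlled in a quantitative way: the corresponding geodesic rays must fellow-travel $F$ for longer and longer before diverging. This control, together with the Hruska--Kleiner description of the visual topology of $X$ near a maximal flat, is what should imply that every non-flat direction approaching $\xi$ can be joined to $\boundary F$ by a path in $V$. Once this verification is carried out, the proof of Theorem~\ref{thm:UnsplittableLC} is complete.
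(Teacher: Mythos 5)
There is a genuine gap, and it sits exactly where you flag the ``main obstacle.'' Your mechanism for the parabolic case---take a connected piece $D \subseteq \boundary F$ through $\xi$, intersect $U$ with the $\pi$--preimage of a small connected neighborhood of the parabolic point, and argue that every component of the preimage outside $\boundary F$ accumulates on $\boundary F$ so the pieces are ``tied together through the flat sphere''---is not strong enough, and no amount of fellow-traveling control by itself will rescue it. The surface-with-torus amalgam of Example~\ref{exmp:SurfaceAmalgam} is the test case: there every hyperbolic-plane direction near $\boundary F$ does accumulate on $\boundary F$ (at the two poles $a,b$), the Bowditch boundary is locally connected, and flats are isolated, yet $\boundary X$ fails to be locally connected at every point of $\boundary F - \{a,b\}$. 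What goes wrong is that the outside components accumulate on the sphere only far from $\xi$, so they cannot be joined to $D$ inside a small neighborhood. Hence any argument at sphere points must use the hypothesis that $G$ does not split over subgroups in $\mathcal{A}$ in an essential, quantitative way, and your sketch invokes it only once, to get no global cut point in $\boundary(G,\P)$, without ever feeding that back into the connectivity of small neighborhoods of $\xi$ in $\boundary X$. The paper's route is precisely this feedback: no peripheral splitting implies $\rho$ is not a global cut point (Theorem~\ref{thm:Bowditch}), so $\boundary(G,\P)-\{\rho\}$ is path connected, which yields a compact \emph{connected} fundamental domain for the action of the flat stabilizer $P$ on $\Upsilon = \boundary X - \boundary F$ (Proposition~\ref{prop:ConnectedFD}); one then transfers clean connected neighborhoods of $\xi$ in $\bar F$ to connected neighborhoods in $\boundary X$ by pushing patterns of translates of fundamental domains from $F$ to $\Upsilon$ (Lemmas \ref{lem:ConnectedTransfer}--\ref{lem:ArbitrarilySmall}), with the fellow-traveling and null-family estimates (Theorem~\ref{thm:facts}, Proposition~\ref{prop:UpsilonNull}) serving only to control sizes. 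Some such mechanism tying nearby points of $\Upsilon$ to $\boundary F$ \emph{near $\xi$} is the missing idea in your proposal.

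A smaller point on the rank-one case: your claim that $\pi$ restricts to a homeomorphism from a neighborhood of $\xi$ onto a neighborhood of $\pi(\xi)$ is false in general, since flat boundary spheres can accumulate at a rank-one point (again visible in Example~\ref{exmp:SurfaceAmalgam}); $\pi$ is injective at $\xi$ but not on any neighborhood. The correct statement is the decomposition-theoretic one the paper uses: the spheres form a null family, so the decomposition is upper semicontinuous with connected members, and local connectedness pulls back at singleton fibers (Propositions \ref{prop:NullUSC}, \ref{prop:Monotone}, and~\ref{prop:PullBackLC}). This part of your argument is repairable; the parabolic case is not, as written.
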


The main technical difficulty in the proof of Theorem~\ref{thm:UnsplittableLC} is proving local connectedness at a point that lies in the limit set of a flat Euclidean subspace.  The examination of such points requires the development of new techniques not used in the study of the Bowditch boundary (see Section~\ref{sec:FlatLocalCon}).

The third and final step in the proof of Theorem~\ref{thm:MainThm} is to use the hypothesis of ``no infinite index splittings'' to conclude that the $\CAT(0)$ boundary of $G$ is locally connected.
Most of the work in this step is proving that $G$ has the structure of a tree of metric compacta where the compacta are boundaries of atomic $\CAT(0)$ groups with isolated flats.

The precise topology on this compactification is given as an inverse limit.  Using this description of the boundary of $G$ as an inverse limit, we deduce local connectivity by applying a theorem of Capel on inverse limits of locally connected spaces \cite{Capel54}.

\subsection{Organization of the paper}

Section~\ref{sec:Examples} contains an informal discussion of several examples of $\CAT(0)$ groups with isolated flats, some with locally connected boundary and some with non--locally connected boundary.
Sections \ref{sec:CAT0Boundary}, \ref{sec:RelHyp}, and \ref{sec:IsolatedFlats} summarize necessary background on $\CAT(0)$ spaces, relative hyperbolicity, and groups with isolated flats.
In Section~\ref{sec:PeripheralSplittings}, we examine Bowditch's work on peripheral splittings of relatively hyperbolic groups and deduce several consequences in the setting of isolated flats.
Theorem~\ref{thm:ConvexVertexGroup} is proved in Section~\ref{sec:ConvexSplittings}.
In Section~\ref{sec:RankOneLocalCon} we show that the visual boundary is locally connected at each point that is not in the boundary of a flat, which is the first part of the proof of Theorem~\ref{thm:UnsplittableLC}.
The remaining part of this theorem is proved in Section~\ref{sec:FlatLocalCon}.
Section~\ref{sec:TreeLimits} summarizes definitions, terminology, and key facts about tree systems of metric compacta.
Finally in Section~\ref{sec:PuttingTogether} we show that the boundary has the structure of a tree system as described in Theorem~\ref{thm:TreeOfSpaces}, and we complete the proof of Theorem~\ref{thm:MainThm}.

\subsection{Acknowledgements}
During their work on this project, the authors benefited from many conversations about this work with Ric Ancel, Mladen Bestvina, Craig Guilbault, Matthew Haulmark, Mike Mihalik, Boris Okun, Eric Swenson, Hung Cong Tran, and Genevieve Walsh.  We are grateful for the advice and feedback received during these conversations.

This work was partially supported by a grant from the Simons Foundation (\#318815 to G. Christopher Hruska).

\section{Examples with and without locally connected boundary}
\label{sec:Examples}

In this section we illustrate both directions of Theorem~\ref{thm:MainThm} with examples.
The first example shows a group whose visual boundary is not locally connected, but whose Bowditch boundary is locally connected.
The remaining examples illustrate various constructions of non-hyperbolic groups with locally connected visual boundaries, some illustrating the ``indecomposable'' case of Theorem~\ref{thm:UnsplittableLC} and others constructed using ``locally finite'' amalgams of indecomposable groups.

\begin{exmp}[A non--locally connected boundary]
\label{exmp:SurfaceAmalgam}
Consider the following amalgam of surface groups whose visual boundary is not locally connected.
Let $\Sigma$ be a closed
hyperbolic surface, and $T^2$ be a $2$--dimensional torus
with a fixed Euclidean metric.
Fix a simple closed geodesic loop $\gamma$ in~$\Sigma$.
Choose a closed geodesic $\gamma' \subset T$ such that
$\gamma$ and $\gamma'$ have equal lengths.
Let $X$ be the result of gluing $\Sigma$ to $T^2$ along $\gamma=\gamma'$.
Then the universal cover $\tilde{X}$ of~$X$ is a $\CAT(0)$ space with isolated flats.
The group $G = \pi_1(X)$ is clearly an amalgam of the subgroups $A=\pi_1(T^2)$ and $C= \pi_1(\Sigma)$ amalgamated over the subgroup $B=\langle\gamma\rangle$.
Since this splitting $A *_B C$ satisfies the conditions of Theorem~\ref{thm:MainThm},
we see that the visual boundary $\boundary \tilde{X}$ is not locally connected.
\end{exmp}

In order to illustrate the difference between the non--locally connected $\CAT(0)$ boundary of $G$ and the locally connected Bowditch boundary $\boundary(G,\mathbb{P})$ in Example~\ref{exmp:SurfaceAmalgam}, we briefly sketch a significant non--locally connected subset of the boundary.  We then describe its locally connected image in the Bowditch boundary.

Each flat $F$ in $\tilde{X}$ is a lift of the torus $T^2$.  Inside $F$ are infinitely many lifts of the geodesic $\gamma$, along each of which there is a copy of $\mathbb H^2$ attached.  The lifts of $\gamma$ in the flat $F$ are all parallel and thus all share the same pair of endpoints $a,b$ in $\partial F$.
Let $Y$ be the convex subcomplex of $\tilde{X}$ consisting of the flat~$F$ along with the countably many hyperbolic planes glued to~$F$ along the lifts of $\gamma$. Then $\boundary Y$ is a suspension of a set $K$ that is countably infinite with two limit points. The suspension points are $a$ and $b$.  Note that this suspension itself is not locally connected.  Furthermore, each of the points in $\boundary F - \{a,b\}$ turns out to be a point of non--local connectivity in $\boundary \tilde{X}$.

To understand what the Bowditch boundary is in this example, note that there are two types of circles in the visual boundary $\boundary\tilde X$---those that occur as the boundary of a hyperbolic plane and those that occur as the boundary of a flat.  By \cite{Tran13}, the Bowditch boundary is obtained from the visual boundary by collapsing the circles arising as boundaries of flats.  In the quotient, each such circle becomes a global cut point of the Bowditch boundary.  Each of these cut points is incident to a countable family of circles whose union forms a Hawaiian earring.

\begin{exmp}[Some locally connected boundaries]
In this example we show three different groups with locally connected boundary, formed by gluing hyperbolic $3$--manifolds along cusps.

First consider the figure eight knot $K \subset S^3$, and let $N$ be a closed regular neighborhood of $K$.  Let $M^3$ be the compact knot complement, $S^3$ minus the interior of $N$.
It is well-known that $G = \pi_1(M^3)$ is one-ended with isolated flats and does not split over any subgroup of the cusp group. Thus by Theorem~\ref{thm:UnsplittableLC} the visual boundary of the $\CAT(0)$ space $\tilde{M^3}$ is locally connected.
In this simple example, it was known previously that the boundary was locally connected, since it is homeomorphic to a Sierpinski carpet by \cite{Ruane05Sierpinski}.
(In this case the Bowditch boundary is a $2$--sphere formed by collapsing each peripheral circle of the Sierpinski carpet to a point.)

If we double $M^3$ along its boundary torus $\boundary N$, we get a closed $3$--manifold consisting of two hyperbolic pieces glued along the torus $T^2 = \boundary N$.
The fundamental group $D$ of the double does not split over any cyclic subgroup of $\pi_1(T^2)$, and thus by Theorem~\ref{thm:MainThm} its boundary is also locally connected.
Once again, we knew this already because the visual boundary of any closed nonpositively curved $3$--manifold group is a $2$--sphere.
Applying the proof of Theorem~\ref{thm:MainThm} to this example recovers the classical decomposition of $S^2$ as a tree of Sierpinski carpets glued in pairs along peripheral circles.

If we form a ``triple'' of $M^3$ instead of a double, we get a less familiar example, whose visual boundary was not previously known to be locally connected.
As with the double, the tripled space is formed by gluing three copies of $M^3$ along the boundary torus $T^2$.  As above its fundamental group does not split over any cyclic subgroup of $\pi_1(T^2)$.
Thus we establish that its visual boundary is locally connected.
We also obtain a description of this boundary as a $2$--dimensional compactum formed as the limit of a tree of Sierpinski carpets, this time glued in triples along peripheral circles.
\end{exmp}

\begin{exmp}
We conclude this section with an example of a group with isolated flats having Serre's Property FA---i.e., no splittings at all.
Consider any Coxeter group $W$ on $5$ generators $s_i$ of order two with defining Coxeter relations $(s_i s_j)^{m_{ij}} = 1$ such that $3\le m_{ij} < \infty$ for all $i\ne j$.
The Davis complex $\Sigma$ of each such $W$ is a piecewise Euclidean $\CAT(0)$ $2$--complex whose $2$--cells are isometric to regular Euclidean $2m_{ij}$--gons with at least $6$ sides.
By an observation of Wise, such complexes have isolated flats
(see \cite{Hruska2ComplexIFP}).
Since each $m_{ij}$ is finite, $W$ has Property FA by \cite{Serre77}.
Therefore $W$ is one-ended with locally connected visual boundary by Theorem~\ref{thm:UnsplittableLC}.
Haulmark--Hruska--Sathaye show in \cite{HaulmarkHruskaSathaye_Menger} that each such $W$ has $\boundary W$ homeomorphic to the Menger curve, using the local connectivity established above as a key step.
\end{exmp}

\section{The visual boundary of a $\CAT(0)$ space}
\label{sec:CAT0Boundary}

We refer the reader to \cite{Ballmann95,BH99} for introductions
to the theory of $\CAT(0)$ spaces.
Throughout this section $X$ is assumed to be a proper $\CAT(0)$ space, a condition that holds whenever $X$ admits a proper, cocompact, isometric group action.

The $\CAT(0)$ geometry $X$ gives rise to the visual boundary $\boundary X$, which is a compact metrizable space.
We first, define the boundary $\boundary X$ as a set as follows:

\begin{defn}[Visual boundary as a set]
Two geodesic rays $c,c^\prime\colon [0,\infty)\to
X$ are said to be \emph{asymptotic} if there exists a constant $K$ such
that $d\bigl(c(t),c^\prime (t)\bigr)\leq K$ for all $t>0$---this is an equivalence
relation.  The boundary of $X$, denoted $\partial X$, is then the set of
equivalence classes of geodesic rays. The equivalence class of a ray $c$ is
denoted by $c(\infty)$.

Since $X$ is complete, then for each basepoint $q\in X$
and each $\xi \in \boundary X$
there is a unique geodesic $c$ such that $c(0)=q$ and $c(\infty)=\xi$.
Thus we may identify $\boundary X$ with the set $\boundary_q X$ of all rays
emanating from $q$.
We use the notation $\bar{X} = X \cup \boundary X$.
\end{defn}

\begin{defn}[The cone topology on $\bar{X}$]
There is a natural topology on $\bar{X}$ called the cone topology, which is defined in terms of the following neighborhood basis.  Let $c$ be a geodesic segment or ray, let $q = c(0)$, and choose any $r>0$ and $D >0$. Also, let $\overline{B}(q,r)$ denote the closed ball of radius $r$ centered at $q$ with $\pi_r\colon \overline{X}\to \overline{B}(q,r)$ denoting projection.  Define
\[
   U(c,r,D)=\bigset{ x\in\overline{X} }{ d(x,q)>r,\ d \bigl(\pi_r(x),c(r) \bigr) < D }
\]
This consists of all points in $\overline{X}$ such that when projected back to $\overline{B}(q,r)$, this projection is not more than $D$ away from the intersection of the sphere with $c$.  These sets along with the metric balls in $X$ form a basis for the \emph{cone topology} on $\overline X$.  The induced topology on $\partial X$ is also called the cone topology on $\partial X$, and the resulting topological space is the \emph{visual boundary} of $X$.
We occasionally use the notation $U(c,r,D)$ to refer to basic neighborhoods in the visual boundary.
Since $X$ is proper, both $\overline{X}$ and the visual boundary are compact.
\end{defn}

It is a well-known result that for any proper $\CAT(0)$ space, both $X$ and $\bar{X}$ are ANR's (Absolute Neighborhood Retracts).
We refer the reader to \cite{Ontaneda05} for a proof that $X$ is an ANR, using a theorem of \cite{Hu65}.  See also \cite[\S 2.9]{Guilbault14} for a proof that $\bar{X}$ is an ANR using work of \cite{Hanner51}.

The main consequence of being an ANR that we will use is that $\bar{X}$ is locally connected.  This consequence is much easier to prove directly, which we do below.

\begin{prop}
\label{prop:BarXLC}
$\bar{X}$ is locally connected.  Furthermore, each point $\xi \in \boundary X$ has a connected neighborhood $\bar{N}$
such that $N = \bar{N} \cap X$ is a connected set in $X$ and each point of $\Lambda = \bar{N} \cap \boundary X$ is a limit point of $N$.
\end{prop}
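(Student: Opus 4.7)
\emph{Proof plan.} At an interior point $x \in X$, the metric balls $\ball{x}{\rho}$ form a neighborhood basis in the cone topology and are convex, hence path-connected, since $X$ is $\CAT(0)$; so $\bar{X}$ is locally connected at $x$ and we may take $\bar{N} = \ball{x}{\rho}$ with $\Lambda = \emptyset$. The substantive case is a boundary point $\xi \in \boundary X$, and the plan is to construct, inside an arbitrary basic neighborhood $U(c,r,D)$ of $\xi$, a path-connected neighborhood $\bar{N}$ realized as the union of a convex ``cap'' inside $\cball{q}{r'}$ and a ``tube'' reaching to infinity, for a suitably large radius $r' > r + D$ and a small tolerance $D' < D$.

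Concretely, I would set $\bar{N} = K \cup C$ where
\[
K = \cball{c(r')}{D'/2} \cap \cball{q}{r'}
\quad\text{and}\quad
C = \bigset{ y \in \bar{X} }{ d(y,q) \ge r',\ d(\pi_{r'}(y), c(r')) \le D'/2 },
\]
with $\pi_{r'} \colon \bar{X} \to \cball{q}{r'}$ the radial projection onto the closed ball. The inclusion $U(c,r',D'/2) \subset C$ makes $\bar{N}$ a neighborhood of $\xi$. To verify $\bar{N} \subset U(c,r,D)$, I would bound $d(\pi_r(y), c(r))$ on each piece: on $C$ by the convexity estimate $d(\pi_r(y), c(r)) \le (r/r')\, d(\pi_{r'}(y), c(r'))$, which comes from convexity of the distance function along pairs of geodesic rays from $q$ in a $\CAT(0)$ space; on $K$ by the $1$-Lipschitz property of projection onto the closed convex set $\cball{q}{r}$, giving $d(\pi_r(y), c(r)) \le d(y, c(r'))$. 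The choice $r' > r + D$ also guarantees $d(y,q) > r$ for $y \in K$.

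The heart of the argument is path-connectedness. The cap $K$ is an intersection of convex sets in a $\CAT(0)$ space, hence path-connected. For each $y \in C$, the geodesic from $q$ to $y$ passes through $\pi_{r'}(y)$ at parameter $r'$, and the portion of that geodesic beyond parameter $r'$ lies entirely in $C$ and joins $y$ to the point $\pi_{r'}(y) \in K$. For $y = \eta \in C \cap \boundary X$ the joining path is the tail of the ray from $q$ to $\eta$, extended continuously to infinity by the cone-topology convergence of a ray to its endpoint. The same construction, with paths kept inside $X$, shows that $N = \bar{N} \cap X$ is path-connected; and each $\eta \in \Lambda = \bar{N} \cap \boundary X$ is the limit of its own ray evaluated at any $t \ge r'$, all of which lie in $N$. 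The main technical wrinkle is continuity of $\pi_{r'}$ on all of $\bar{X}$: this follows from the standard $1$-Lipschitz property of projection onto closed convex subsets of $X$, together with the fact that the geodesics $[q,x_n]$ converge uniformly on compact sets to the ray from $q$ to $\eta$ whenever $x_n \to \eta$ in the cone topology.
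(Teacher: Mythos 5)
Your construction is correct: the cap-and-tube set $\bar{N}=K\cup C$ is a connected neighborhood of $\xi$ inside $U(c,r,D)$ with the stated properties, and the facts you invoke all hold in a proper $\CAT(0)$ space --- convexity of $t\mapsto d\bigl(\gamma(t),c(t)\bigr)$ for geodesics issuing from $q$ gives the $(r/r')$--estimate on $C$; the $1$--Lipschitz projection onto $\cball{q}{r}$ together with $\pi_r\bigl(c(r')\bigr)=c(r)$ handles $K$; and a radial geodesic has constant $\pi_{r'}$ beyond parameter $r'$, so the tails you use to connect points of $C$ down to $\pi_{r'}(y)\in K$ really stay in $\bar{N}$ (the continuity of $\pi_{r'}$ you worry about at the end is not actually needed). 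The paper's proof rests on the same mechanism (radial geodesics from the basepoint sweep the neighborhood into a connected core near $c(r)$) but implements it differently: it shows that the basic open set $U(c,r,D)$ is \emph{itself} connected, by observing that each $p\in N=U(c,r,D)\cap X$ is joined within $N$, along a terminal subsegment of $[q,p]$, to a connected set meeting $B\bigl(c(r),D\bigr)$, and then simply takes $\bar{N}=U(c,r,D)$, every point of which lies in the closure of $N$. That route buys open connected basic neighborhoods, so local connectedness of $\bar{X}$ is immediate and no auxiliary radius $r'$ or tolerance $D'$ is needed; your route produces non-open neighborhoods and tacitly uses the standard fact that a base of connected (not necessarily open) neighborhoods at every point yields local connectedness --- a convention the paper itself adopts in Section~\ref{sec:FlatLocalCon} --- in exchange for closed defining conditions that make path-connectedness of $\bar{N}$ and $N$ and the limit-point property of $\Lambda$ completely transparent. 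Either version supplies what is used later, namely a local base of clean connected neighborhoods at each $\xi\in\boundary X$.
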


Let us pause for a moment to warn the reader that the notation $\bar{N}$ does not refer to a closed set of $\bar{X}$, but rather refers to the fact that each point of $\bar{N}$ is a limit point of $N$.

\begin{proof}
Metric balls in $X$ are connected.
Furthermore the basic open sets $\bar{N}=U(c,r,D)$ are also connected.
Indeed a point $p\in X$ lies in $N = U(c,r,D) \cap X$ if and only if the geodesic segment $c'= \bigl[c(0),p \bigr]$ intersects $B\bigl( c(r), D \bigr)$.
For any such point $p = c'(t) \in N$, we can choose $s$ so that $d\bigl(c'(s),c(r)\bigr) <D$.  It follows that the entire subpath of $c'$ from $c'(s)$ to $p=c'(t)$ is contained in $N$.
Therefore every point of $N$ lies in a connected subset of $N$ that intersects the connected ball 
$B\bigl( c(r), D \bigr)$.  In other words, $N$ is connected.
The entire set $\bar{N} = U(c,r,D)$ is contained in the closure of $N$ within the space $\bar{X}$, so $\bar{N}$ is also connected.
\end{proof}

Throughout this paper we will often need to compare the sizes of different subsets of the visual boundary. To make such a comparison, it is useful to consider explicit metrics on the visual boundary.  The following definition due to Osajda introduces a family of such metrics.

\begin{defn}[Boundary as a metric space]
\label{defn:BoundaryMetric}
Fix a basepoint $q \in X$. For each $D >0$ and $r<\infty$ we define a metric $d_D$ on $\boundary_q X$ as follows.
Given distinct rays $c$ and $c'$ based at $q$, the distance function $t \mapsto d\bigl( c(t),c'(t) \bigr)$ monotonically increases from $0$ to $\infty$.  Thus there exists a unique $r \in (0,\infty)$ such that $d\bigl( c(r),c'(r) \bigr) = D$.  We set $d_D(c,c') = 1/r$.  The function $d_D$ is a metric compatible with the cone topology by Osajda--\Swiatkowski\ \cite{OsajdaSwiatkowski15}. (See also \cite{Moran16}.)
\end{defn}

\section{Relatively hyperbolic groups and their boundaries}
\label{sec:RelHyp}

In this section we define the notions of relative hyperbolicity and the Bowditch boundary.  The definitions we use are due to Yaman \cite{Yaman04} and are given in terms of dynamical properties of an action on a compact space, which turns out to be the Bowditch boundary.

A \emph{convergence group action} is an action
of a finitely generated group $G$ on a compact, metrizable space $M$ satisfying the following conditions, depending on the cardinality
of $M$:
\begin{itemize}
\item If $M$ is the empty set, then $G$ is finite.
\item If $M$ has exactly one point, then $G$ is infinite.
\item If $M$ has exactly two points, then $G$ is virtually cyclic.
\item If $M$ has at least three points, then the action of $G$ on the space
of distinct (unordered) triples of points of $M$ is properly discontinuous.
\end{itemize}
In the first three cases the convergence group action is \emph{elementary}, and in the final
case the action is \emph{nonelementary}.

Suppose $G$ has a convergence group action on $M$.
An element $g \in G$ is \emph{loxodromic} if it has infinite order
and fixes exactly two points of $M$.
A subgroup $P \le G$ is a \emph{parabolic subgroup} if it is infinite and
contains no loxodromic element.
A parabolic subgroup $P$ has a unique fixed point in $M$, called a
\emph{parabolic point}.
The stabilizer of a parabolic point is always a maximal parabolic group.
A parabolic point $p$ with stabilizer $P := \Stab_G(p)$ is \emph{bounded parabolic}
if $P$ acts cocompactly on $M - \{p\}$.
A point $\xi \in M$ is a \emph{conical limit point} if
there exists a sequence $(g_i)$ in $G$ and distinct points
$\zeta_0,\zeta_1 \in M$
such that $g_i(\xi) \to \zeta_0$, while for all $\eta \in M - \{\xi\}$
we have $g_i(\eta) \to \zeta_1$.

\begin{defn}[Relatively hyperbolic]
\label{def:RelHyp}
A convergence group action of $G$ on $M$ is \emph{geometrically finite}
if every point of $M$ is either a conical limit point or a
bounded parabolic point.
If $\P$ is a collection of subgroups of $G$, then the pair $(G,\P)$ is \emph{relatively hyperbolic} if $G$ admits a geometrically finite convergence group action on a compact, metrizable space $M$ such that $\P$ is equal to the collection of all maximal parabolic subgroups.
\end{defn}

\begin{defn}[Bowditch boundary]
By \cite{Yaman04} and \cite{BowditchRelHyp} the space $M$ is uniquely determined by $(G,\P)$ in the following sense:
Any two spaces $M$ and $M'$ arising from the previous definition are $G$--equivariantly homeomorphic.
The compactum $M$ is the \emph{Bowditch boundary} of $(G,\P)$, and will be denoted by $\boundary(G,\P)$.
\end{defn}

We remark that by work of Yaman, the Bowditch boundary can be obtained as the Gromov boundary of a certain $\delta$--hyperbolic space on which $G$ acts.
Although we will not use this point of view in the present article, the construction of this $\delta$--hyperbolic space is the basis of Yaman's proof that Definition~\ref{def:RelHyp} is equivalent to other definitions appearing in the literature (such as those in \cite{BowditchRelHyp}).

Suppose $G$ acts properly, cocompactly, and isometrically on a $\CAT(0)$ space $X$.  Suppose also that $G$ has a family of subgroups $\P$ such that $(G,\P)$ is relatively hyperbolic.
In this case, we have introduced two different types of boundary that one may associate with $G$: the visual boundary $\boundary X$ of the $\CAT(0)$ space $X$ and the Bowditch boundary $\boundary (G,\P)$.
These boundaries are closely related by a theorem of Hung Cong Tran, as was mentioned in the introduction.  We give a precise statement here.

\begin{thm}[\cite{Tran13}]
\label{thm:BoundaryQuotient}
Let $(G,\P)$ and $X$ be as above.
The quotient space formed from $\boundary X$ by collapsing the limit set of each $P \in \P$ to a point
is $G$--equivariantly homeomorphic to the Bowditch boundary $\boundary(G,\P)$.
\end{thm}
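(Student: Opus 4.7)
My plan is to identify the quotient $Q$ of $\boundary X$ by $\{\Lambda_P\}_{P \in \P}$ (the collection of parabolic limit sets) with $\boundary(G,\P)$ by invoking Yaman's uniqueness: a compact metrizable $G$--space carrying a geometrically finite convergence action whose maximal parabolic subgroups are exactly $\P$ is canonically $\boundary(G,\P)$. So the work splits into (a) establishing that the quotient is compact metrizable and $G$--equivariant, and (b) verifying the three dynamical conditions: convergence action, bounded parabolicity at each collapsed point, and conical convergence at every other point.

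For (a), the equivariance of the decomposition is automatic since $g \cdot \Lambda_P = \Lambda_{gPg^{-1}}$ and $\P$ is closed under conjugation, so the $G$--action descends. Compactness of $Q$ is immediate; for Hausdorffness and metrizability I would show that $\bigl\{ \Lambda_P \bigr\}_{P \in \P}$ is a \emph{null family} in $\boundary X$, meaning that in any visual metric from Definition~\ref{defn:BoundaryMetric}, only finitely many of the sets $\Lambda_P$ have diameter exceeding a given $\epsilon > 0$. In the isolated flats setting each $\Lambda_P$ is a boundary sphere of a maximal flat, and a basepoint in $X$ only sees finitely many flats within any fixed radius, which combined with the projection description of the cone topology produces the null condition. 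Standard upper semicontinuous decomposition theory then yields compact metrizability of $Q$.

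For (b), I would prove the convergence property by lifting a sequence $(g_i)$ in $G$ to $X$ and analyzing the displacement of a basepoint: properness of the action forces $d\bigl( q, g_i(q) \bigr) \to \infty$, so a subsequence yields attracting/repelling points $\zeta_1,\zeta_0 \in \boundary X$ whose images in $Q$ witness the convergence dynamics on triples. Bounded parabolicity of $[\Lambda_P]$ reduces to the well-known cocompactness of $P$ on any horosphere centered at the corresponding flat, together with the fact that in $\boundary X - \Lambda_P$ the $P$--action already is cocompact by a horoball argument, which survives the quotient. For conicality of the remaining points, given $\xi \in \boundary X$ not lying in any $\Lambda_P$, the geodesic ray from $q$ to $\xi$ must escape every horoball (a ray trapped in a horoball based at the parabolic fixed point of a flat $F$ has endpoint in $\Lambda_F$), so by cocompactness of $G$ on $X$ one can extract orbit elements $g_i$ translating a sequence of points on the ray to a bounded region, giving the required pair of limits in $Q$.

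The main obstacle will be the conicality verification at points $\xi \in \boundary X \setminus \bigcup \Lambda_P$ that accumulate many parabolic fixed points: the geodesic ray to $\xi$ may enter and exit an unbounded sequence of disjoint horoballs, and one must argue that between successive horoball excursions the ray stays in a thick part where orbit points are controlled. This is where I expect to need a quantitative version of isolated flats, or equivalently a comparison with the cusped hyperbolic model of Groves--Manning. Once that geometric control is in place, the convergence of $g_i(\xi)$ and of $g_i(\eta)$ for all $\eta \ne \xi$ to distinct quotient points follows from standard arguments, and Yaman's theorem closes out the identification $Q \cong \boundary(G,\P)$.
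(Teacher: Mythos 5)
The paper does not actually prove this statement: Theorem~\ref{thm:BoundaryQuotient} is imported from Tran \cite{Tran13} as a black box, so there is no internal proof to compare against. Your overall plan (collapse the limit sets, show the quotient is compact metrizable via an upper semicontinuous decomposition, then verify Yaman's dynamical characterization) is the natural route and is in the spirit of the cited result; the null-family input you want is even proved independently in this paper for the isolated flats case (the proposition preceding Corollary~\ref{cor:QuotientUSC}). Note, however, that the statement ``as above'' assumes only that $(G,\P)$ is relatively hyperbolic and that $G$ acts geometrically on $X$ --- no isolated flats --- so your null-family argument via boundary spheres of maximal flats only covers the special case used in this paper, not the generality in which the theorem is stated.

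The genuine gap is the convergence-group step. You assert that properness gives $d\bigl(q,g_i(q)\bigr)\to\infty$ and hence, after passing to a subsequence, ``attracting/repelling points $\zeta_1,\zeta_0\in\boundary X$ whose images in $Q$ witness the convergence dynamics.'' This fails on $\boundary X$: in the presence of flats the $G$--action on the visual boundary is \emph{not} a convergence action, which is exactly why the limit sets must be collapsed in the first place. Concretely, choose $p$ of infinite order in a finite-index free abelian subgroup of a higher rank peripheral group $P$ stabilizing a flat $F$, acting on $F$ by a translation; then $g_i = p^i$ fixes the circle $\boundary F \subset \boundary X$ pointwise (a translation of $\E^2$ induces the identity on its visual boundary), so no attracting/repelling pair exists upstairs and there is nothing to push down. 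The convergence property, proper discontinuity on distinct triples, and bounded parabolicity of $[\Lambda_P]$ must be established directly on the quotient, with separate treatment of sequences that track into a single peripheral coset versus those that do not; this, together with the conicality analysis you correctly flag, is the technical heart of Tran's theorem rather than a routine consequence of properness. Relatedly, the cocompactness of $P$ on $\boundary X - \Lambda_P$ that you invoke as a ``horoball argument'' is itself a substantive result --- in the isolated flats setting it is due to Haulmark \cite{HaulmarkCAT0} and is quoted as such in Section~\ref{sec:FlatLocalCon} --- so it needs to be cited or proved, not treated as well known.
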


\section{Isolated flats}
\label{sec:IsolatedFlats}

A \emph{$k$--flat} in a $\CAT(0)$ space $X$ is an isometrically embedded
copy of Euclidean space $\E^k$ for some $k\ge 2$.
In particular, note that a geodesic line is
not considered to be a flat.

\begin{defn}\label{def:IsolatedFlats}
Let $X$ be a $\CAT(0)$ space, $G$ a group acting
geometrically on $X$, and $\F$ a $G$--invariant set of flats in $X$.
We say that $X$ has \emph{isolated flats with respect to $\F$}
if the following two conditons hold.
\begin{enumerate}
  \item \label{item:AllFlats}
  There is a constant~$D$ such that every flat $F \subset X$ lies in a
  $D$--neighborhood of some $F' \in \F$.
  \item \label{item:ControlledInt}
  For each positive $r< \infty$ there is a constant
  $\rho = \rho(r) < \infty$ so that for any two distinct flats
  $F,F' \in \F$ we have
  \[
     \diam \bigl( \nbd{F}{r} \cap \nbd{F'}{r} \bigr) < \rho.
  \]
\end{enumerate}
We say $X$ has \emph{isolated flats} if it has isolated flats with 
respect to some $G$--invariant set of flats.
\end{defn}

\begin{thm}[\cite{HruskaKleinerIsolated}]
\label{thm:PeriodicFlats}
Suppose $X$ has isolated flats with respect to~$\F$.
For each $F \in \F$ the stabilizer $\Stab_G(F)$ is virtually
abelian and acts cocompactly on~$F$.
The set of stabilizers of flats $F \in \F$ is precisely the set of
maximal virtually abelian subgroups of~$G$ of rank at least two.
These stabilizers lie in only finitely many conjugacy classes.
\end{thm}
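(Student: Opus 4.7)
The plan is to establish the three assertions in order: first cocompactness of $\Stab_G(F)$ on $F$, then the algebraic description and maximality, and finally the finiteness of conjugacy classes. A recurring tool will be the compactness of the space of flats of a fixed dimension passing through a bounded region, combined with the key hypothesis \textup{(}\ref{item:ControlledInt}\textup{)} which forbids distinct elements of $\F$ from agreeing on arbitrarily large sets.

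For cocompactness, fix $F\in\F$ with $H:=\Stab_G(F)$, and choose a compact set $K\subset X$ with $G\cdot K=X$. Every point of $F$ lies in some $gK$, so if $H$ failed to act cocompactly on $F$, I would extract a sequence $y_i\in F$ in pairwise distinct $H$-cosets $g_iH$ with $g_i^{-1}y_i\in K$. Then the flats $F_i:=g_i^{-1}F\in\F$ are pairwise distinct but each passes through $K$. Passing to a subsequence, the points $g_i^{-1}y_i$ converge in $K$ and an Arzel\`a--Ascoli argument on the isometric parametrizations $\E^k\to F_i$ yields a locally uniform limit flat $F_\infty$. For $i,j$ sufficiently large the $1$-neighborhoods $\nbd{F_i}{1}$ and $\nbd{F_j}{1}$ both contain an $R$-ball of $F_\infty$ for $R$ arbitrarily large, contradicting $\rho(1)<\infty$. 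Once $H$ acts properly cocompactly on $F\cong\E^k$, Bieberbach's theorem gives that $H$ is virtually $\Z^k$, hence virtually abelian of rank $k\ge 2$.

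For the algebraic characterization, let $A\le G$ be any maximal virtually abelian subgroup of rank $n\ge 2$. The flat torus theorem produces an $n$-flat $F'\subset X$ stabilized cocompactly by $A$, and condition \textup{(}\ref{item:AllFlats}\textup{)} places $F'$ inside a $D$-neighborhood of some $F\in\F$. For any $a\in A$, the translate $a\cdot F$ lies in the $D$-neighborhood of $a\cdot F'=F'$, so $F$ and $a\cdot F$ both lie in $\nbd{F'}{D}$. Since $F'$ is infinite-dimensional-at-infinity Euclidean, $\nbd{F}{2D}\cap\nbd{a\cdot F}{2D}\supset F'$ has infinite diameter, so condition \textup{(}\ref{item:ControlledInt}\textup{)} forces $a\cdot F=F$. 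Hence $A\le\Stab_G(F)$; combining with part (i) and maximality of $A$ gives $A=\Stab_G(F)$. Conversely, every $\Stab_G(F)$ is virtually abelian of rank $\ge 2$ by (i), and the same trapping argument shows any virtually abelian overgroup must also preserve $F$ and therefore coincide with $\Stab_G(F)$, yielding maximality.

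For finiteness of conjugacy classes, I would again choose a compact $K$ with $G\cdot K=X$, so that every $G$-orbit in $\F$ has a representative meeting $K$. If infinitely many distinct $F_i\in\F$ meet $K$, pick $y_i\in F_i\cap K$; by compactness some subsequence $y_i\to y$, and the same Arzel\`a--Ascoli argument as above produces a limit flat $F_\infty$ with $F_i\to F_\infty$ locally uniformly, so late pairs $F_i\ne F_j$ violate \textup{(}\ref{item:ControlledInt}\textup{)} for any fixed $r$ with $\rho(r)$ less than the coarse diameter of convergence. I expect the main obstacle to be making the Arzel\`a--Ascoli step fully rigorous in an arbitrary proper $\CAT(0)$ space: one must argue that the isometric embeddings $\E^k\to F_i$ (after basepoint normalization) are uniformly Lipschitz and equicontinuous, that the pointwise limit is still an isometric embedding of $\E^k$ (so that $F_\infty$ is genuinely a $k$-flat and not a degenerate limit), and that convergence is strong enough on large balls to trigger condition \textup{(}\ref{item:ControlledInt}\textup{)}. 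This limiting step is also the linchpin of parts (i) and (iii), so once it is properly set up the rest of the proof is straightforward bookkeeping with the flat torus theorem and Bieberbach's theorem.
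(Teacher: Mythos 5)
This theorem is imported from \cite{HruskaKleinerIsolated}; the present paper gives no proof of it (the closest it comes is the pigeonhole argument of Proposition~\ref{prop:pigeonhole}, which it attributes to that source), so there is no internal proof to compare against. Your reconstruction is essentially the standard argument: local finiteness of $\F$ via an Arzel\`a--Ascoli limit of flats played against condition~(\ref{item:ControlledInt}), pigeonhole for cocompactness of $\Stab_G(F)$, Bieberbach, and the Flat Torus Theorem plus the isolation ``trapping'' trick for the correspondence with maximal virtually abelian subgroups of rank at least two. The outline is correct.

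A few routine patches are needed to make it airtight. (a) Distinctness of the flats $F_i=g_i^{-1}F$ corresponds to distinct \emph{right} cosets $Hg_i$, not left cosets $g_iH$, and the implication ``$H$ not cocompact on $F$ $\Rightarrow$ the chosen $g_i$ fall into infinitely many such cosets'' is exactly the computation spelled out in Proposition~\ref{prop:pigeonhole}. (b) The Flat Torus Theorem applies to free abelian groups, so to get an $n$--flat invariant under all of $A$ you should pass to a normal finite-index $\Z^n\le A$ (semisimplicity of all elements follows from cocompactness of the $G$--action), note that $A$ preserves the splitting $\operatorname{Min}(\Z^n)=Y\times\E^n$ and acts on $Y$ through the finite quotient $A/\Z^n$, and use the Cartan fixed-point theorem to find $y_0\in Y$ with $F'=\{y_0\}\times\E^n$ preserved by $A$. (c) In the maximality direction, your trapping argument only shows that a virtually abelian overgroup $B\supseteq\Stab_G(F)$ stabilizes \emph{some} $F_1\in\F$; to conclude $B\le\Stab_G(F)$ you still need $F_1=F$, which follows from one more application of condition~(\ref{item:ControlledInt}) after observing that $\Stab_G(F)$ acts cocompactly on $F$ and has an orbit inside $F_1$, so $F$ lies in a bounded neighborhood of $F_1$. (d) In the conjugacy-class step the flats meeting $K$ may a priori have varying dimensions, so either pass to a constant-dimension subsequence or run the limiting argument on $2$--flats inside each $F_i$ through the chosen basepoints; the Arzel\`a--Ascoli step itself is unproblematic since isometric embeddings with basepoints in a compact set are equicontinuous and pointwise precompact by properness, and their locally uniform limits are again isometric embeddings. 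None of these changes the structure of your proof.
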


\begin{thm}[\cite{HruskaKleinerIsolated}]
\label{thm:HKMain}
Let $X$ have isolated flats with respect to~$\F$.  Then the following
properties hold.
\begin{enumerate}
  \item \label{item:RelHyp}
  $G$ is relatively hyperbolic with respect to the collection
  of all maximal virtually abelian subgroups of rank at least two.
  \item \label{item:TitsBoundary}
  The connected components of the Tits boundary $\tits X$
  are isolated points together with the boundary spheres
  $\tits F$ for all $F \in \F$.
  \end{enumerate}
\end{thm}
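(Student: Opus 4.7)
I would prove the two parts separately, each resting on the isolation hypothesis together with the structural information from Theorem~\ref{thm:PeriodicFlats}. For part~(\ref{item:RelHyp}), the natural strategy is to verify the Yaman/Bowditch dynamical criterion from Section~\ref{sec:RelHyp}. The candidate compactum is $M = \boundary X / {\sim}$, obtained by collapsing each $\boundary F$ (for $F \in \F$) to a single point $p_F$. Condition~(\ref{item:ControlledInt}) makes the collection $\{\boundary F\}_{F \in \F}$ an upper semicontinuous decomposition of $\boundary X$ into closed sets, so $M$ is compact and metrizable. A North--South dynamics argument, standard for a geometric $\CAT(0)$ action, shows that any $g_n \to \infty$ in $G$ has a subsequence converging to some $\xi_+ \in \boundary X$ uniformly on compacta of $\boundary X \setminus \{\xi_-\}$; this descends to a convergence group action on $M$. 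To see geometric finiteness, each $p_F$ is a parabolic point with stabilizer $\Stab_G(F)$, which is virtually abelian of rank $\ge 2$ by Theorem~\ref{thm:PeriodicFlats}; bounded parabolicity follows from cocompactness of $\Stab_G(F)$ on $F$ combined with condition~(\ref{item:ControlledInt}). Points of $M$ coming from rays not asymptotic to any $F \in \F$ are then shown to be conical limit points by translating a representative ray back into a fixed compact set using cocompactness of the $G$--action.

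For part~(\ref{item:TitsBoundary}), it suffices to prove that any two boundary points at Tits distance strictly less than $\pi$ lie in a common $\tits F$. Each $\tits F$ is a round sphere $S^{k-1}$, hence Tits-connected, so this identifies the components exactly as claimed. Suppose $d_T(\xi,\eta) < \pi$. A standard flat sector construction for proper cocompact $\CAT(0)$ spaces (in the spirit of the Flat Strip Theorem and Bridson--Haefliger Chapter II.9) produces a flat sector or half-plane in $X$ whose ideal boundary arc realizes the Tits geodesic from $\xi$ to $\eta$. Extending this two-dimensional flat piece to a genuine flat in $X$, condition~(\ref{item:AllFlats}) places it within a bounded neighborhood of some $F \in \F$. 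A short argument using cocompactness of $\Stab_G(F)$ on $F$ then forces both $\xi$ and $\eta$ to lie in $\tits F$. Consequently every Tits-connected component is either contained in some $\tits F$ or is a single point.

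The main obstacle I anticipate is the verification of conical limit points in part~(\ref{item:RelHyp}). It is easy to move a ray into a compact set by cocompactness, but the dynamical definition requires controlling the $g_n$--images of \emph{all} other boundary points simultaneously, and in particular preventing them from accumulating at a collapsed flat in the wrong way. This needs a quantitative application of condition~(\ref{item:ControlledInt}) that limits how long any geodesic can remain close to a single flat before escaping. A related subtlety in part~(\ref{item:TitsBoundary}) is extracting a genuine flat piece in $X$ from two Tits-close points at infinity: the passage from a Tits geodesic in $\boundary X$ to an actual flat sector in $X$ is delicate in a general proper cocompact setting and relies crucially on the cocompact action together with the asymptote lemma for $\CAT(0)$ spaces.
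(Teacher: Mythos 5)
This theorem is not proved in the paper at all: it is quoted from \cite{HruskaKleinerIsolated}, so there is no internal argument to compare yours against, and the original proof proceeds by quite different means (geometric characterizations of relative hyperbolicity such as the relatively thin triangle property and asymptotic-cone arguments), not by verifying Yaman's dynamical criterion on a collapsed boundary. Judged on its own terms, your sketch for part~(\ref{item:RelHyp}) has a genuine gap at its first step: the asserted north--south dynamics on $\boundary X$ is false whenever $\F \ne \emptyset$. By Theorem~\ref{thm:PeriodicFlats} there is a rank-two virtually abelian subgroup stabilizing some $F \in \F$; a sequence $g^n$ with $g$ an infinite-order translation of $F$ fixes the sphere $\boundary F$ pointwise, so no subsequence can converge to a single point uniformly on compacta of the complement of a point. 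The convergence property only has a chance of holding \emph{after} collapsing, and establishing it on $M$ (together with bounded parabolicity of the points $p_F$, i.e.\ cocompactness of $\Stab_G(F)$ on the complement of $p_F$, and the conicality of all remaining points, which you yourself flag) is essentially the entire content of the theorem; it does not descend from any standard fact upstairs. Note also that even the upper semicontinuity/nullity of the family $\{\boundary F\}$, which you attribute directly to condition~(\ref{item:ControlledInt}), is derived in this paper from Theorem~\ref{thm:facts}, whose proof invokes part~(\ref{item:TitsBoundary}) of the very theorem you are proving, so an independent argument would be needed there as well.

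For part~(\ref{item:TitsBoundary}), your reduction is fine: if any two points at Tits distance less than $\pi$ lie in a common $\tits F$, then each $\tits F$ is clopen and all other singletons are isolated, giving the stated component structure. But the central step is unsupported. There is no standard theorem producing a flat sector spanning two boundary points at Tits distance less than $\pi$ in a proper cocompact $\CAT(0)$ space: the Bridson--Haefliger rigidity statement yields a flat sector only when the Tits angle is actually realized as the angle at some point $p \in X$, and realizing that supremum is not automatic (translating a sequence of almost-realizing basepoints by cocompactness also moves $\xi$ and $\eta$, so the obvious limiting argument breaks down). Worse, even granting a sector or half-plane, ``extending this two-dimensional flat piece to a genuine flat'' is unjustified: condition~(\ref{item:AllFlats}) of Definition~\ref{def:IsolatedFlats} constrains flats, i.e.\ isometric copies of $\E^k$, and says nothing about sectors or half-planes, which need not extend to any $2$--flat. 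In the isolated flats literature the assertion that flat half-planes lie near members of $\F$ is itself a lemma requiring proof, and nothing analogous is automatic for sectors of angle less than $\pi$. So both halves of your proposal assume exactly the points where the isolated flats hypothesis has to do real work.
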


The previous theorem also has the following converse.

\begin{thm}[\cite{HruskaKleinerIsolated}]
\label{thm:HKConverse}
Let $G$ be a group acting geometrically on a $\CAT(0)$ space $X$.  Suppose $G$ is relatively hyperbolic with respect to a family of virtually abelian subgroups. Then $X$ has isolated flats.
\end{thm}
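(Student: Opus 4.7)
The plan is to exhibit a $G$--invariant family $\mathcal{F}$ of flats satisfying Definition~\ref{def:IsolatedFlats}, built from periodic flats stabilized by the peripheral subgroups in $\mathcal{P}$. For each $P \in \mathcal{P}$ of $\Q$--rank $k \ge 2$, the flat torus theorem yields a $P$--invariant closed convex subspace of $X$ splitting isometrically as $\E^k \times K_P$ with $P$ acting cocompactly; pick a horizontal slice $F_P \cong \E^k$. Let $\mathcal{F}$ be the $G$--orbit of $\{F_P\}$, which is manifestly $G$--invariant.

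For condition~(\ref{item:AllFlats}), let $F' \subset X$ be an arbitrary flat of dimension $k \ge 2$. The first subtask is to produce a virtually $\Z^k$ subgroup $H \le G$ acting cocompactly on some flat at bounded Hausdorff distance from $F'$. Using geometricity of the $G$--action, a pigeonhole/Arzel\`a--Ascoli argument applied to $G$--translates of large balls in $F'$ yields group elements that act on $F'$ approximately as $k$ independent Euclidean translations; the subgroup they generate has bounded orbits transverse to $F'$ and, by the flat torus theorem, acts cocompactly on a nearby flat $F''$. Since $(G,\mathcal{P})$ is relatively hyperbolic, any virtually abelian subgroup of rank at least two must lie in a maximal parabolic, so $H \subset g P g^{-1}$ for some $g \in G$ and $P \in \mathcal{P}$. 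Both $F''$ and $g F_P$ are $H$--cocompact flats, hence by the uniqueness portion of the flat torus theorem they lie at bounded Hausdorff distance, with a uniform bound because $\mathcal{P}$ has only finitely many $G$--conjugacy classes and $G$ acts cocompactly on $X$.

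For condition~(\ref{item:ControlledInt}), I will invoke the bounded penetration property of relatively hyperbolic groups (Farb; \Drutu--Sapir; Osin): for each $r$ there exists $\rho(r)$ such that for any two distinct peripheral cosets $gP$ and $g'P'$, the intersection $\nbd{gP}{r} \cap \nbd{g'P'}{r}$ in a word metric on $G$ has diameter at most $\rho(r)$. Distinct flats $F,F' \in \mathcal{F}$ correspond to distinct peripheral cosets, and the $G$--equivariant quasi-isometry between a Cayley graph of $G$ and $X$ carries each coset to a bounded neighborhood of the corresponding flat. Therefore any unbounded subset of $\nbd{F}{r} \cap \nbd{F'}{r}$ would pull back to an unbounded subset of the corresponding coset intersection, contradicting bounded penetration.

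The main obstacle is the construction in condition~(\ref{item:AllFlats}) of a cocompact virtually abelian subgroup associated to an arbitrary flat $F'$. Theorem~\ref{thm:PeriodicFlats} cannot be invoked, because it presupposes isolated flats, precisely what is being proved. Instead one must perform a direct ``flat closing'' argument: extract near-translations of $F'$ from $G$--recurrence on a large disk inside $F'$, control the transverse drift well enough to apply the flat torus theorem, and thereby produce the required $\Z^k$--action. Once this step is complete, the remainder is a fairly standard translation between the convergence-dynamics picture on the Bowditch boundary and the $\CAT(0)$ geometry of $X$ via the $G$--equivariant quasi-isometry.
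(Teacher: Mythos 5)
Your overall architecture (periodic flats $F_P$ coming from the flat torus theorem, $\F$ the $G$--orbit, bounded coset penetration for condition~(\ref{item:ControlledInt})) is sensible, but the crucial step is missing, and you have put your finger on it yourself: the ``flat closing'' argument needed for condition~(\ref{item:AllFlats}) is not something a pigeonhole/Arzel\`a--Ascoli recurrence argument delivers. Recurrence of $G$--translates of large balls of an arbitrary flat $F'$ produces elements $g$ that move a big disk of $F'$ a bounded amount, but in a general $\CAT(0)$ group there is no known way to promote such almost-translations into $k$ genuine, independent isometries generating a $\Z^k$ that stabilizes a flat near $F'$; this is essentially the periodic flat problem. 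Indeed, in the cited source the periodicity statement (Theorem~\ref{thm:PeriodicFlats} here) is \emph{deduced from} isolated flats, not available beforehand, which is exactly why you correctly refused to invoke it --- but your substitute is a hope, not an argument. Even granting a periodic flat near each individual $F'$, your sketch gives no uniform constant $D$ over all flats, which Definition~\ref{def:IsolatedFlats}(\ref{item:AllFlats}) requires.

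The actual proof (this paper only cites \cite{HruskaKleinerIsolated} and does not reproduce it) uses the relative hyperbolicity hypothesis on the flat $F'$ itself rather than on a subgroup extracted from it: $X$ is $G$--equivariantly quasi-isometric to $G$, which by \Drutu--Sapir is asymptotically tree-graded with respect to the peripheral cosets, equivalently with respect to the periodic flats $gF_P$; their theorem that any quasiflat of dimension at least two in an asymptotically tree-graded space lies in a uniformly bounded neighborhood of a single piece then gives condition~(\ref{item:AllFlats}) at once, with a uniform $D$ and with no flat-closing step, while bounded penetration of pieces gives condition~(\ref{item:ControlledInt}) much as you argue. One smaller point: you should choose the slice $F_P$ to be $P$--invariant (take a finite-index normal $A\cong\Z^k$, note $P$ acts on the cross-sectional factor of $\operatorname{Min}(A)\cong Y\times\E^k$ through the finite group $P/A$, and fix a circumcenter in $Y$); if the slice is only $A$--invariant, the $G$--orbit $\F$ can contain distinct parallel flats inside one Min set, and then $\nbd{F}{r}\cap\nbd{F'}{r}$ is unbounded for such a pair, so condition~(\ref{item:ControlledInt}) would fail for your family as defined.
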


A group $G$ that admits an action on a $\CAT(0)$ space with isolated flats has a ``well-defined'' visual boundary, often denoted by $\boundary G$, by the following theorem.

\begin{thm}[\cite{HruskaKleinerIsolated}]
\label{thm:WellDefinedBoundary}
Let $G$ act properly, cocompactly, and isometrically on two
$\CAT(0)$ spaces $X$ and $Y$.  If $X$ has isolated flats, then so does $Y$, and there is a $G$--equivariant homeomorphism $\boundary X \to \boundary Y$.
\end{thm}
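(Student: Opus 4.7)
The plan is to proceed in two stages: first deduce from the relative hyperbolicity framework that $Y$ has isolated flats; then construct a $G$--equivariant homeomorphism $\boundary X \to \boundary Y$ by exploiting the shared Bowditch boundary of $(G,\P)$.

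For the first stage, Theorem~\ref{thm:HKMain}\textup{(\ref{item:RelHyp})} applied to $X$ shows that $G$ is relatively hyperbolic with respect to the family $\P$ of maximal virtually abelian subgroups of rank at least two. Since relative hyperbolicity is intrinsic to the pair $(G,\P)$ and does not depend on the space acted upon, Theorem~\ref{thm:HKConverse} applied to the geometric action on $Y$ shows that $Y$ has isolated flats with respect to some $G$--invariant family $\F_Y$. Theorem~\ref{thm:PeriodicFlats} then yields a canonical $G$--equivariant bijection $\F_X \leftrightarrow \F_Y$ sending $F \mapsto F'$ exactly when $\Stab_G(F) = \Stab_G(F')$.

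For the second stage, Theorem~\ref{thm:BoundaryQuotient} supplies $G$--equivariant quotient maps $q_X\colon \boundary X \to \boundary(G,\P)$ and $q_Y\colon \boundary Y \to \boundary(G,\P)$ obtained by collapsing each flat boundary $\boundary F$ \textup{(}resp.\ $\boundary F'$\textup{)} to the parabolic fixed point of its stabilizer. Over each conical limit point, both quotient maps are bijective, yielding a canonical lift. Over each parabolic point fixed by $A \in \P$, the preimages are spheres $\boundary F_A$ and $\boundary F'_A$ of the same dimension, each equipped with a cocompact $A$--action. Choosing an $A$--equivariant homeomorphism $\phi_A\colon \boundary F_A \to \boundary F'_A$ on representatives of the finitely many $G$--orbits of flats and extending $G$--equivariantly produces a bijection $\tilde{h}\colon \boundary X \to \boundary Y$.

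The hard part will be verifying continuity of $\tilde{h}$ at a point $\xi \in \boundary F_A$ approached from outside the flat $F_A$, because the Bowditch quotient collapses the entire approach to a single parabolic point and so forgets the specific location on $\boundary F'_A$ at which the image sequence should accumulate. To resolve this I would fix a $G$--equivariant quasi-isometry $f\colon X \to Y$ obtained from orbit maps and invoke the structure theory of quasi-geodesics in a relatively hyperbolic \textup{(}equivalently, isolated flats\textup{)} $\CAT(0)$ space: a sequence $\xi_n \to \xi$ from outside $F_A$ corresponds to geodesic rays whose $f$--images are quasi-geodesics in $Y$ that fellow-travel the flat $F'_A$ and accumulate at a well-defined point of $\boundary F'_A$. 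Selecting each $\phi_A$ so as to reproduce this limiting behavior forces $\tilde{h}$ to be continuous, and applying the same argument to a quasi-inverse of $f$ shows $\tilde{h}$ is a homeomorphism. The delicate issue is arranging the sphere-level choices globally and $G$--equivariantly while maintaining compatibility with the asymptotics of $f$.
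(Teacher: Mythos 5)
First, a bookkeeping remark: the paper does not prove Theorem~\ref{thm:WellDefinedBoundary} at all; it is quoted from \cite{HruskaKleinerIsolated}, so the only question is whether your outline would actually establish it. Your first stage is fine: $X$ having isolated flats gives relative hyperbolicity of $(G,\P)$ by Theorem~\ref{thm:HKMain}, and Theorem~\ref{thm:HKConverse} applied to $Y$ gives isolated flats there; Theorem~\ref{thm:PeriodicFlats} then matches up flats with equal stabilizers.

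The genuine gap is in the second stage, exactly at the point you flag as ``the hard part,'' and the mechanism you propose to close it does not work as stated. You claim that a sequence $\xi_n \to \xi \in \boundary F_A$ gives rays whose $f$--images are quasi-geodesics in $Y$ that fellow-travel $F'_A$ and therefore ``accumulate at a well-defined point of $\boundary F'_A$.'' This is false for quasi-geodesics in a Euclidean flat: a quasi-geodesic ray in $\E^2$ (for example a logarithmic spiral) need not converge to any point of the circle at infinity, and more generally a quasi-isometry $\E^n \to \E^n$ need not extend to the visual sphere. So the $f$--image of a geodesic ray of $X$ lying in, or asymptotic to, $F_A$ gives you no well-defined limit point in $\boundary F'_A$, and hence no forced, well-defined choice of $\phi_A$; the ``canonical lift'' over the parabolic directions is precisely what is in question. (This is also why quasi-isometries of general $\CAT(0)$ spaces fail to induce boundary maps, \emph{cf.} Croke--Kleiner, so one cannot simply ``invoke the structure theory of quasi-geodesics.'') The missing ingredient is to use equivariance in an essential, quantitative way: the restriction of the orbit-map quasi-isometry to $F_A$ is coarsely $A$--equivariant, where $A$ acts cocompactly by isometries on both $F_A$ and $F'_A$; by Bieberbach/lattice considerations such a map lies at bounded distance from an affine map, and affine maps \emph{do} extend to homeomorphisms of the spheres at infinity. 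That is what pins down $\phi_A$ and makes the sphere-level behavior compatible with the asymptotics of $f$, and it is the heart of the Hruska--Kleiner argument. Even granting that, continuity of $\tilde h$ at $\xi \in \boundary F_A$ along sequences approaching from outside the flat still needs the quantitative quasiconvexity of geodesics entering a flat (the kind of estimate recorded in Theorem~\ref{thm:facts} and exploited in Section~\ref{sec:FlatLocalCon}), not just qualitative fellow-traveling. As written, the proposal assumes the conclusion at the only genuinely difficult point.
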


Finally, we point out a key geometric fact about $\CAT(0)$ spaces with isolated flats that will be used several times throughout this paper.

\begin{thm}\label{thm:facts}
Suppose $X$ is a $\CAT(0)$ space with isolated flats with respect to $\mathcal{F}$.
There exists a constant $\kappa>0$, such that the following holds:
Given a point $x$, a flat $F \in \mathcal{F}$,
with $c\colon [a,b] \to X$ the shortest path from $x$ to $F$, we have $c \cup F$ is
$\kappa$--quasiconvex in $X$.
More precisely, if $c'$ is any geodesic joining a point of $c$ to a point of $F$, then $c'$ intersects $B \bigl( c(b),\kappa \bigr)$.
\end{thm}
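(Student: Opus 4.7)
The plan is as follows. Fix a point $p = c(t)$ on $c$ and a point $z \in F$, set $y = c(b)$, and let $c'$ be the geodesic from $p$ to $z$; the goal is to produce a universal $\kappa$ such that $c'$ meets $\ball{y}{\kappa}$. First I would observe that $y$ is also the nearest point of $F$ to $p$: otherwise, concatenating $c|_{[a,t]}$ with a shorter path from $p$ to $F$ would produce a path from $x$ to $F$ shorter than $c$, contradicting its minimality. By the first variation formula, the Alexandrov angle at $y$ between $c|_{[t,b]}$ and every geodesic in $F$ issuing from $y$ is at least $\pi/2$.

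Using this right-angle condition, I would then verify that the broken path $\gamma := c|_{[t,b]} \cup [y,z]_F$, where $[y,z]_F$ denotes the geodesic in $F$ from $y$ to $z$, is a $(\sqrt 2,0)$--quasi-geodesic from $p$ to $z$. Indeed, the $\CAT(0)$ angle comparison forces the Euclidean comparison angle at $y$ to be at least $\pi/2$, whence the law of cosines yields $d(p,z)^2 \ge d(p,y)^2 + d(y,z)^2 \ge \tfrac12\bigl( d(p,y) + d(y,z) \bigr)^2$. The same estimate applied to arbitrary subsegments of $\gamma$ shows that $\gamma$ is a quasi-geodesic with uniform constants.

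By Theorem~\ref{thm:HKMain}, $G$ is relatively hyperbolic with respect to the collection of stabilizers of flats in $\F$. This supplies a bounded peripheral penetration principle: there is a constant $\kappa_0$, depending only on $X$ and on the quasi-geodesic constants, such that any two quasi-geodesics sharing endpoints enter and leave each flat of $\F$ at points within $\kappa_0$ of one another. Applying this to $\gamma$ and $c'$: since $\gamma$ enters $F$ at $y$ and then travels entirely within $F$ to $z$, the geodesic $c'$ must meet $F$ at some point $y'$ with $d(y,y') \le \kappa_0$. Because $y' \in c'$, taking $\kappa = \kappa_0$ completes the argument.

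The main obstacle is making the bounded-penetration principle rigorous in the $\CAT(0)$ metric on $X$, as the statement is typically formulated in word metrics on Cayley graphs or in Bowditch's cusped model. One route is to invoke the $G$--equivariant quasi-isometry between $X$ and a Cayley graph of $G$: each $F \in \F$ is then Hausdorff-close to a coset of its (maximal virtually abelian) stabilizer, and the word-metric statement applies to the images of $\gamma$ and $c'$, pulling back to $X$ with adjusted constants. A more direct geometric alternative proceeds by contradiction in an asymptotic cone: failure of the conclusion for arbitrarily large $\kappa$ would, after rescaling, yield two topological arcs from $[p]$ to $[z]$ in a tree-graded asymptotic cone of $X$ whose pieces are the ultralimits of flats in $\F$, and these arcs would enter the piece corresponding to $F$ at distinct points---contradicting the tree-graded structure.
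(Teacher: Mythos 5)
Your argument is essentially correct, but it takes a genuinely different route from the paper. Your preliminary steps are clean and right: since $c(b)$ is the projection of every point of $c$ onto the convex flat $F$, the Alexandrov (hence comparison) angle at $c(b)$ is at least $\pi/2$, and the law-of-cosines estimate does make the concatenation $\gamma = c|_{[t,b]} \cup [c(b),z]_F$ a $(\sqrt{2},0)$--quasi-geodesic, uniformly over subsegments. The paper instead argues by contradiction with a limiting argument: if the conclusion failed, one could use cocompactness to translate a sequence of counterexamples onto a single flat $F$, extract limits $\xi_x \in \boundary X \setminus \boundary F$ and $\xi_y \in \boundary F$, and then invoke \cite[Corollary~7]{HruskaKleinerErratum} to force $d_T(\xi_x,\xi_y)\le\pi$, contradicting the description of the components of the Tits boundary in Theorem~\ref{thm:HKMain}. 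So the paper's key input is the Tits boundary structure plus the erratum, while yours is a bounded-penetration/relative fellow-traveling statement for quasi-geodesics; what your route buys is an explicit, quantitative $\kappa$ with no passage to infinity, at the cost of importing heavier relative-hyperbolicity machinery. Two caveats on making your step rigorous: Farb-style BCP is stated for paths in the coned-off graph, so the quasi-isometry transfer you sketch is not immediate for word-metric quasi-geodesics; the cleanest citation is the relative fellow-traveller property for quasi-geodesics proved in \cite{HruskaKleinerIsolated} itself (or the tree-graded/asymptotically tree-graded results of \cite{DrutuSapirTreeGraded}), which applies directly in the $\CAT(0)$ model. Also, such statements give that $c'$ enters a uniform neighborhood of $F$ at a point uniformly close to where $\gamma$ does (near $c(b)$), not that $c'$ literally meets $F$ there --- which is all Theorem~\ref{thm:facts} asks --- and they require the penetration of $F$ to be deep; the shallow case, where $d\bigl(c(b),z\bigr)$ is bounded, is trivial since $c'$ ends at $z$, but you should say so. Your asymptotic-cone alternative is plausible but is only a sketch and would need the standard arguments about arcs in tree-graded spaces spelled out.
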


\begin{proof}
If the claim were false, there would be sequences
of flats $F_i \in \mathcal{F}$ and points $x_i \in X$
and $q_i, y_i \in F_i$ such that $[x_i,q_i]$ is a shortest path
from $x_i$ to $F_i$
and $d\bigl(q_i,[x_i,y_i]\bigr)$ tends to infinity.

Pass to a subsequence and translate by the action of $G$ so
that $F_i=F$ is constant.
After passing to a further subsequence, the points $q_i$, $x_i$, and $y_i$ converge respectively to $q \in F$, $\xi_x \in \boundary X$, and
$\xi_y \in \boundary F$.
furthermore, $\xi_x \notin \boundary F$ since the ray from $q$ to $\xi_x$
meets $F$ orthogonally.
Since $d\bigl(q_i,[x_i,y_i]\bigr)$ tends to infinity, it follows
from \cite[Corollary~7]{HruskaKleinerErratum}
that $d_T(\xi_x,\xi_y) \le \pi$,
contradicting Theorem~\ref{thm:HKMain}(\ref{item:TitsBoundary}).
\end{proof}

\section{Peripheral splittings}
\label{sec:PeripheralSplittings}

In this section, we give the definition of a peripheral splitting of a relatively hyperbolic group and examine some of their basic properties.
We introduce the notion of a locally finite peripheral splitting, which plays a key role in the proof of Theorem~\ref{thm:MainThm}.
The goal of this section is to prove Theorem~\ref{thm:BowditchComponent}, which roughly states that in the locally finite case many features of a relatively hyperbolic group are inherited by the vertex groups of its maximal peripheral splitting.

\begin{defn}[Peripheral splittings]
\label{def:peripheral}
Suppose $(G,\P)$ is a relatively hyperbolic group.  A \emph{peripheral splitting} of $(G,\P)$ is a splitting of $G$ as a finite bipartite graph of groups $\mathcal G$, whose vertices have two types that we call \emph{peripheral} vertices and \emph{component} vertices.
We require that the collection of subgroups of $G$ conjugate to the peripheral vertex groups is identical to the collection $\P$ of all peripheral subgroups.
We also require that $\mathcal{G}$ does not contain a component vertex of degree one that is contained in the adjacent peripheral group.
\end{defn}

Such a splitting is called \emph{trivial} if one of the vertex groups is equal to $G$ and \emph{nontrivial} otherwise.  One peripheral splitting $\mathcal H$ is a \emph{refinement} of another $\mathcal G$ if $\mathcal G$ can be obtained from $\mathcal H$ by a finite sequence of foldings of edges
that preserve the colors of vertices.
A refinement is \emph{trivial} if it involves no folds, i.e., the refinement is an isomorphism of graphs.
A peripheral splitting of $(G,\P)$ is \emph{maximal} if it admits only trivial refinements.

Let $\mathcal{A}$ be the collection of all subgroups of the peripheral subgroups of $G$.
If $(G,\P)$ admits only a trivial peripheral splitting, then $G$ does not split over $\mathcal{A}$ relative to $\mathbb{P}$ in the following sense: let $G$ act without inversions on a simplicial tree $T$ such that each peripheral subgroup has a fixed point in $T$.  Then the action of $G$ on $T$ has a global fixed point.

The notion of an atomic group, mentioned in the introduction, is a much stronger condition that does not include the phrase ``relative to $\mathbb{P}$.''

\begin{defn}[Atomic]
\label{defn:Atomic}
We say that $(G,\P)$ is \emph{atomic} if $G$ is one-ended, each $P \in \P$ is one-ended, and $G$ cannot be expressed as an HNN extension or a nontrivial amalgam over any subgroup in $\mathcal{A}$.
\end{defn}

\begin{thm}[Bowditch]
\label{thm:Bowditch}
Suppose $(G,\P)$ is relatively
hyperbolic, $G$ is one-ended and each $P \in \P$ is finitely presented, does not contain an infinite torsion group, and is either one-ended or two-ended.
\begin{enumerate}
\item \cite[Prop.~10.1]{BowditchRelHyp} $\boundary(G,\P)$ is connected.
\item \cite[Thm.~1.5]{Bowditch01}
$\boundary(G,\P)$ is locally connected.
\item \cite[Thm.~1.4]{Bowditch01}
$(G,\P)$ has a unique maximal peripheral splitting, which could be trivial.
\item \cite[\S 9]{Bowditch01}
\label{item:BowditchCutPoint}
If the maximal peripheral splitting is trivial, then $\boundary(G,\P)$ does not contain a global cut point.
\end{enumerate}
\end{thm}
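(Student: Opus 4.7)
The plan is to treat the four items separately, since each is attributed to a different part of Bowditch's work and each needs a rather different argument. For item~(1), connectedness of $\boundary(G,\P)$, the natural route is to exploit the convergence action directly. Assume $\boundary(G,\P)$ decomposes as a disjoint union of two nonempty clopen sets $U_1, U_2$. The compact set $\Sigma$ of unordered pairs $\{u_1,u_2\}$ with $u_i \in U_i$ is $G$--invariant under the diagonal action, and I would try to promote the properly discontinuous action on triples to show that $G$ acts on the ``separation pattern'' in a way that produces a nontrivial action on a tree with finite edge stabilizers. One-endedness of $G$ together with Stallings' theorem would then give a contradiction. An alternative route is to use Theorem~\ref{thm:BoundaryQuotient}-style correspondence between the ends of a geometrically finite model space and the components of the boundary.

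For item~(2), local connectedness, I would try to import the classical Bestvina--Mess / Swarup / Bowditch strategy from the hyperbolic setting to the relatively hyperbolic one. The approach is to assume a point $p$ of non-local connectedness, extract a nondegenerate continuum $K \subset \boundary(G,\P)$ in every small neighborhood of $p$ that does not contain $p$, and then iterate the convergence action on triples to translate $K$ to a family of continua with uniformly bounded diameter and a limit behavior that violates the cardinality constraints of convergence groups. The main obstacle is that, unlike the hyperbolic case, the parabolic points $p \in \P$ genuinely can be points of non-local connectedness in the visual boundary of the ambient $\CAT(0)$ space; to get around this, one has to carry out the argument in $\boundary(G,\P)$ itself, using the bounded parabolic condition to analyze the topology of $\boundary(G,\P) \setminus \{p\}$ and the assumption that each $P$ is one-ended or two-ended to rule out pathological local topology at parabolic points. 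This step is genuinely the hardest; I would expect to lean heavily on Yaman's equivalence with Bowditch's earlier definition so that a cusped hyperbolic model is available for geometric arguments.

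For items~(3) and~(4), the approach is essentially JSJ-theoretic. For~(4), assume $\boundary(G,\P)$ has a global cut point $p$. The stabilizer $\Stab_G(p)$ is either a parabolic group or is trivial/finite (since conical limit points are not cut points in a nondegenerate continuum under convergence dynamics). Analyzing the components of $\boundary(G,\P) \setminus \{p\}$ and their $\Stab_G(p)$--orbits yields an invariant ``decomposition'' whose Bass--Serre dual tree is a nontrivial peripheral splitting. For~(3), I would build a maximal peripheral splitting by iterating the construction of~(4) (or its generalization to non-global cut pairs): start with any peripheral splitting and refine whenever possible by finding a cut point in the boundary of a component vertex group. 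The crux is an accessibility argument bounding the complexity of such refinements, which in the present setting follows from the finite presentability hypothesis on the peripheral groups together with Dunwoody-style accessibility for splittings over small subgroups. Uniqueness of the maximal splitting is then a matter of showing that any two peripheral splittings have a common refinement, so a maximal splitting is a terminal vertex in the partial order of refinements and is unique up to the obvious equivalence.
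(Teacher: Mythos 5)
You should first be aware that the paper contains no proof of this statement: Theorem~\ref{thm:Bowditch} is a compendium of results quoted directly from Bowditch, with each item attributed to its source (\cite[Prop.~10.1]{BowditchRelHyp} and \cite[Thms.~1.4, 1.5, \S 9]{Bowditch01}), and it is used as a black box throughout. So your proposal is not an alternative to anything in the paper; it is an attempt to reconstruct Bowditch's own arguments, each of which occupies a substantial portion of a long and technical paper. Judged on those terms, what you have written is a list of plausible headings rather than a proof, and in each item the step you leave vague is exactly the hard content.

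Concretely: in (1), ``promote the properly discontinuous action on triples to \ldots a nontrivial action on a tree with finite edge stabilizers'' is not a routine convergence-group manipulation; it is the whole of Bowditch's Proposition~10.1, which identifies components of $\boundary(G,\P)$ with ends of $G$ relative to $\P$ via a geometrically finite model and then invokes a relative Stallings-type theorem. In (2), your plan inverts the actual logical structure of Bowditch's work: the Bestvina--Mess/Swarup translation argument does not transfer directly because of the parabolic points, and \cite[Thm.~1.5]{Bowditch01} is \emph{deduced from} the global cut-point analysis and the peripheral splitting machinery (essentially items (3)--(4) plus the Bowditch--Swarup cut-point/dendrite theory and a strong accessibility input), so item (2) cannot be obtained independently of (3)--(4) in the way you propose. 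In (3), the accessibility you appeal to is unavailable: the refinements are over \emph{infinite} subgroups of peripheral groups, so Dunwoody accessibility (which requires finite edge groups) does not apply, and finite presentability of the $P\in\P$ does not by itself bound iterated splittings over infinite edge groups; Bowditch obtains existence and uniqueness of the maximal peripheral splitting through the structure of the boundary, not by a common-refinement argument in the poset of splittings. In (4), your parenthetical claims --- that a global cut point must be parabolic (equivalently, that a conical limit point is never a global cut point) and that the complementary components organize themselves into a \emph{simplicial} tree dual to a splitting --- are themselves nontrivial theorems, proved via the $\R$--tree/dendrite machinery (with Levitt's theorem used to get simpliciality), not facts one can assert in passing. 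None of this means your outline points in wrong directions; it means that at every item the outline stops precisely where Bowditch's proofs begin, so as a proof of the theorem it has genuine gaps, whereas the paper's ``proof'' is simply the citation.
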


If $(G,\mathbb{P})$ is atomic then the last conclusion of this theorem holds.
In general the component vertex groups of the maximal peripheral splitting inherit a natural relatively hyperbolic structure, but they do not need to be atomic. 
See \cite{HruskaRuaneHierarchies} for examples of relatively hyperbolic groups whose component groups are not atomic, and which require a nontrivial hierarchy of splittings over parabolic subgroups in order to reach atomic groups.

In order to ensure that the maximal peripheral splitting has atomic component vertex groups, we need to focus on the special case of locally finite splittings, which are better behaved than the general case.
A peripheral splitting $\mathcal{G}$ is \emph{locally finite} if for each peripheral vertex group $P$ the adjacent edge groups include as finite index subgroups of $P$.
Equivalently, in the Bass--Serre tree $T$ for $\mathcal{G}$, the vertex $v$ stabilized by $P$ has finite valence.
The following theorem summarizes key properties of the component vertex groups in the unique maximal peripheral splitting in the special case that this splitting is locally finite.
For simplicity we have stated this result only in the isolated flats setting, which is the only case needed in this paper.

\begin{thm}
\label{thm:BowditchComponent}
Let $(G,\P)$ be relatively hyperbolic such that $G$ is one-ended and each $P \in \P$ is virtually abelian of $\Q$--rank at least two.
Suppose the maximal peripheral splitting $\mathcal{G}$ of $(G,\P)$ is locally finite.

For each component vertex group $H$ of $\mathcal{G}$, let $\mathbb{O}$ be the collection of infinite groups of the form $H \cap P$ for all $P \in \P$.
Then 
\begin{enumerate}
    \item
    \label{item:ComponentRelHyp}
    $(H,\mathbb{O})$ is relatively hyperbolic.
    \item Each $O \in \mathbb{O}$ is virtually abelian of $\Q$--rank at least two.
    \item $(H,\mathbb{O})$ is atomic.
\end{enumerate}
\end{thm}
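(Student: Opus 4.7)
The plan is to prove the four claims in the order (3), (2), (1), (4). A key preliminary observation is that every $Q \in \mathbb{Q}$ arises as the stabilizer of an edge of the Bass--Serre tree $T$ incident to the vertex $v$ stabilized by $H$. Indeed, if $Q = H \cap P$ is infinite, then $Q$ fixes both $v$ and the unique peripheral vertex $w_P$ of $T$ stabilized by $P$, hence fixes the entire geodesic between them. If this geodesic had length greater than one, it would traverse a peripheral vertex stabilized by a distinct peripheral subgroup $P' \neq P$, forcing $Q \subseteq P \cap P'$; but the latter is finite by almost malnormality of peripheral subgroups in a relatively hyperbolic group, a contradiction. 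Thus $w_P$ is adjacent to $v$ and $Q$ is the stabilizer of the edge $[v, w_P]$.

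Part (3) then follows at once: the locally finite hypothesis gives $[P : Q] < \infty$, so $Q$ inherits finite presentability, the absence of infinite torsion, and one-endedness from $P$. For part (2), finite presentability of $H$ is a standard consequence of Bass--Serre theory, using that $G$ is finitely presented (which follows from relative hyperbolicity with finitely presented peripherals), that $\mathcal{G}$ is a finite graph of groups, and that all edge and peripheral vertex groups are finitely presented. For one-endedness of $H$: the group is infinite and contains one-ended subgroups (its incident edge stabilizers), so it is not two-ended; and if it split over a finite subgroup by Stallings, one could refine $\mathcal{G}$ at $v$ to produce a splitting of $G$ over a finite subgroup, contradicting one-endedness of $G$.

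Part (1) is the main technical content. The plan is to apply Yaman's dynamical characterization by constructing a compact metrizable space $M_H$ with a geometrically finite convergence action of $H$ whose maximal parabolic subgroups are precisely the members of $\mathbb{Q}$. The natural candidate is built from the Bowditch boundary $M = \partial(G, \P)$: each peripheral vertex $w$ of $T$ corresponds to a parabolic point $p_w \in M$, and (using local finiteness) a neighborhood of $p_w$ decomposes according to the finitely many directions in $T$ at $w$. One forms $M_H$ by collapsing, for each peripheral vertex $w$ of $T$, all subsets of $M$ corresponding to directions at $w$ that lead away from $v$, down to the point $p_w$ itself. The main obstacle is verifying that the quotient is Hausdorff and that every point is either a conical limit point for $H$ or a bounded parabolic point stabilized by an element of $\mathbb{Q}$. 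An alternative approach is to adapt the machinery of \cite{Bowditch01}, where analogous decompositions underlie the uniqueness of the maximal peripheral splitting; one could also work with a cusped $\delta$-hyperbolic model of $(G, \P)$ and extract a quasiconvex $H$-invariant subspace whose Gromov boundary realizes $M_H$.

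Finally, part (4) follows from the maximality of $\mathcal{G}$: any nontrivial peripheral splitting of $(H, \mathbb{Q})$ could be grafted into $\mathcal{G}$ at $v$, replacing the single vertex $v$ by the Bass--Serre tree of the $H$-splitting with incident peripheral edges attached to the appropriate new component vertices, yielding a proper peripheral refinement of $\mathcal{G}$ and contradicting maximality.
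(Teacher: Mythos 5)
The main gap is in part~(1), which you yourself call the main technical content but do not actually prove. Your plan---build a compact space $M_H$ from the Bowditch boundary $\boundary(G,\P)$ by collapsing, at each peripheral vertex of $T$, the branches pointing away from $v$, and then verify Yaman's dynamical conditions---is a reasonable outline, but the two verifications you flag as ``the main obstacle'' (that the quotient is Hausdorff/metrizable, and that every point of $M_H$ is either a conical limit point for $H$ or a bounded parabolic point with stabilizer in $\mathbb{Q}$) are exactly where all the work lies, and you leave them unresolved; in particular the conical-limit-point check requires comparing the $H$--dynamics on the collapsed space with the $G$--dynamics on $\boundary(G,\P)$, which is not routine. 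The paper does not attempt any of this: it simply invokes Theorem~1.3 of \cite{Bowditch01}, which is precisely the statement that a component vertex group of a peripheral splitting is hyperbolic relative to the infinite intersections with the peripheral subgroups (equivalently, relative to its incident edge groups); that citation also supplies your preliminary observation identifying each $Q \in \mathbb{Q}$ with the stabilizer of an edge of $T$ at $v$, as well as the connectedness of $\boundary(H,\mathbb{Q})$ that the paper uses later in the proof. As written, your proposal replaces a citation of an established theorem with an unexecuted program to reprove it, so part~(1) is not established.

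Part~(4) has the same character: grafting is the right intuition, but as stated it does not produce a peripheral splitting of $(G,\P)$. The peripheral vertex groups of a peripheral splitting of $(H,\mathbb{Q})$ are (conjugates of) the groups $Q$, which are proper finite-index subgroups of the corresponding $P \in \P$; after blowing up $v$ you must fold each such $Q$--vertex into the adjacent $P$--vertex of $T$ and verify that the result is a genuine peripheral splitting of $(G,\P)$ (peripheral vertex groups exactly the conjugates of members of $\P$, bipartite, the degree-one condition) which properly refines $\mathcal{G}$; the paper avoids all of this by citing Lemma~4.6 of \cite{Bowditch01}. The remaining parts are essentially fine, though partly by different routes: your part~(3) is the paper's argument; for finite presentability of $H$ the paper uses relative hyperbolicity together with \cite{Osin06} (hence needs part~(1) first), while your Bass--Serre route is plausible but should rest on a precise citation; and your one-endedness argument via blowing up $v$ along the Stallings tree can be made to work, but the refinement step needs the details the paper itself supplies elsewhere (Proposition~\ref{prop:CombinedTree} and Lemma~\ref{lem:NontrivialSplitting}), whereas the paper's proof at this point instead combines connectedness of $\boundary(H,\mathbb{Q})$ with Proposition~10.1 of \cite{BowditchRelHyp}.
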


\begin{proof}
Let $T$ be the Bass--Serre tree of the splitting $\mathcal{G}$.  Then $H$ is stabilizer of a vertex $v$ in $T$.
Since the graph of $\mathcal{G}$ is bipartite, the other end of each such edge $e$ is a peripheral vertex $w$.
It follows that the family $\mathbb{O}$ is the collection of stabilizers of edges $e$ adjacent to $v$.
By the locally finite hypothesis, each $O \in \mathbb{O}$ is virtually abelian of $\mathbb{Q}$--rank at least two.
By \cite{Bowditch01}, the pair $(H,\mathbb{O})$ is relatively hyperbolic and $\boundary(H,\mathbb{O})$ is connected.

Since each $O \in \mathbb{O}$ is one-ended and the Bowditch boundary is connected, it follows that $H$ is a one-ended group by Proposition~10.1 of \cite{BowditchRelHyp}.

By Lemma~4.6 of \cite{Bowditch01}, the pair $(H,\mathbb{O})$ admits only a trivial peripheral splitting.
In general that lemma does not imply that $H$ has no splittings over subgroups of members of $\mathbb{O}$.
However since the members of $\mathbb{O}$ are one-ended, we may apply \cite[Proposition~5.2]{Bowditch01} to conclude that $(H,\mathbb{O})$ is atomic.
\end{proof}

\section{Convex splittings of $\CAT(0)$ groups}
\label{sec:ConvexSplittings}

In this section we prove Theorem~\ref{thm:ConvexVertexGroup}, which is a convex splitting theorem for $\CAT(0)$ groups. This splitting theorem does not use the notion of isolated flats, and could potentially be a useful tool in many other $\CAT(0)$ settings.

We apply this splitting theorem to peripheral splittings of $\CAT(0)$ groups with isolated flats, in order to prove Theorem~\ref{thm:ComponentCAT0}, which states that the vertex groups of such a splitting are also $\CAT(0)$ groups with isolated flats.  Although this article is mainly focused on locally finite splittings, Theorem~\ref{thm:ComponentCAT0} has no such restriction and applies to all $\CAT(0)$ groups with isolated flats.  Ben-Zvi uses Theorem~\ref{thm:ComponentCAT0} to show that all one-ended $\CAT(0)$ groups with isolated flats have globally path connected boundary \cite{Benzvi_PathCon}.

The next two results are key tools used in the proof of Theorem~\ref{thm:ConvexVertexGroup}.

\begin{lem}
\label{lem:MapToTree}
Suppose $G$ acts geometrically on a $\CAT(0)$ space $X$, and $G$ also acts on a simplicial tree $T$.
Then there is a $G$--equivariant map $\pi\colon X \to T$ that is continuous with respect to the CW--topology on $T$.
\end{lem}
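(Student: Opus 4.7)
The plan is to build $\pi$ by averaging over a $G$--equivariant partition of unity on $X$, sending each piece to a fixed point of its finite stabilizer in $T$ and then taking weighted barycenters in the $\CAT(0)$ metric on $T$. Using that $G$ acts properly and cocompactly on the proper metric space $X$, I would first lift a finite open cover of the compact quotient $G \backslash X$ to obtain a $G$--equivariant locally finite open cover $\{U_\alpha\}_{\alpha \in \Lambda}$ of $X$ by relatively compact sets; properness forces each setwise stabilizer $G_\alpha := \Stab_G(U_\alpha)$ to be finite. Using paracompactness I would choose a $G$--equivariant partition of unity $\{\phi_\alpha\}$ subordinate to this cover.

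Next, for each $\alpha$ I would select a point $t_\alpha \in T$ fixed by the finite group $G_\alpha$; this is possible because any finite group acting by isometries on a complete $\CAT(0)$ space has a fixed point (for trees this is classical, and the tree $T$ with its simplicial metric is $\CAT(0)$). Picking one representative per $G$--orbit of indices and decreeing $t_{g\alpha} = g \cdot t_\alpha$ makes the assignment $\alpha \mapsto t_\alpha$ $G$--equivariant. I would then define
\[
  \pi(x) \;=\; \text{the $\CAT(0)$ barycenter in $T$ of the points $t_\alpha$ weighted by $\phi_\alpha(x)$.}
\]
Local finiteness of the cover makes this a finite weighted average at every point, and equivariance of both the assignment $\alpha \mapsto t_\alpha$ and the partition of unity yields $\pi(gx) = g \pi(x)$.

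The step I expect to be the main obstacle is continuity with respect to the CW topology on $T$, because the tree $T$ need not be locally finite---for Bass--Serre trees of splittings with infinite-index edge inclusions it typically is not---and in that case the CW topology is strictly finer than the $\CAT(0)$ metric topology. I would handle this locally: around each $x_0 \in X$ there is a neighborhood $V$ on which only finitely many $\phi_\alpha$ are nonzero, so $\pi(V)$ lies in the convex hull of a finite set of points of $T$, which is a finite subtree $T_0 \subset T$. On a finite subtree the CW and metric topologies coincide, and the inclusion $T_0 \hookrightarrow T$ is continuous in the CW topology; continuity of $\pi|_V \colon V \to T_0$ in the metric topology follows from the continuous dependence of $\CAT(0)$ barycenters on their weights, and this upgrades to continuity of $\pi$ into $T$ with the CW topology.
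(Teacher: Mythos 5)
Your argument is correct, but it takes a genuinely different route from the paper's. The paper does not work directly with $X$: it invokes a theorem of Ontaneda producing a $G$--equivariant continuous map from $X$ to a locally finite simplicial complex $K$ with a proper, cocompact, simplicial $G$--action, and then builds the map $K \to T$ by sending an orbit representative of each vertex of $K$ to a vertex of $T$ fixed by its finite stabilizer (after subdividing $T$ so that $G$ acts without inversions) and extending over the higher skeleta using contractibility of $T$. Your construction replaces both ingredients at once: the equivariant partition of unity subordinate to a locally finite cover with finite setwise stabilizers plays the role of Ontaneda's complex, and the weighted $\CAT(0)$ barycenter in $T$ plays the role of the skeleton-by-skeleton extension; the common core of the two proofs is the choice, for each finite stabilizer, of a fixed point in $T$ (Bruhat--Tits in your version, the classical fixed-vertex fact in the paper's). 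What your route buys is self-containedness---no appeal to Ontaneda's nontrivial result---together with an explicit treatment of the CW topology via the observation that locally the image lies in a finite subtree, where the CW and metric topologies agree; the paper's route avoids barycenter machinery, needing only that $T$ is contractible, at the cost of the external citation. Two small points you should make explicit: plain paracompactness does not by itself yield a $G$--equivariant partition of unity, but the standard formula $\phi_\alpha(x) = d(x, X \setminus U_\alpha)\big/\sum_\beta d(x, X \setminus U_\beta)$ does, since your cover is equivariant and locally finite; and your fixed point $t_\alpha$ may be the midpoint of an inverted edge rather than a vertex, which is harmless because you only ever use points of the geometric realization and isometries of its path metric.
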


\begin{proof}
The proof depends on the following result of Ontaneda  \cite{Ontaneda05}:
Let $X$ be a $\CAT(0)$ space on which $G$ acts properly, cocompactly, and isometrically. Then there exists a locally finite, finite dimensional simplicial complex $K$ on which $G$ acts properly, cocompactly, and simplicially.  Furthermore there is a $G$--equivariant continuous map $X \to K$.

To complete the proof, we need a $G$--equivariant continuous map $K \to T$.  After replacing $T$ with its barycentric subdivision, we may assume that $G$ acts on $T$ without inversions.
Choose a representative $\sigma$ for each $G$--orbit of $0$--simplices.  Since $G$ acts properly on $K$, the $G$--stabilizer of $\sigma$ is a finite group $K_\sigma$.
Let $v_\sigma$ be a vertex of $T$ fixed by $K_\sigma$.
We define the map $K^{(0)} \to T$ by mapping $\sigma \mapsto v_\sigma$ and extending equivariantly.
Since $T$ is contractible, we may extend this map to the higher skeleta of $K$ in an equivariant fashion.
\end{proof}

The following folk result has been used implicitly in many places throughout the literature (see for example \cite{HruskaKleinerIsolated}).
We have decided to include the (short) proof for the benefit of the reader.
Dani Wise has described the proof as a ``pigeonhole principle'' for cocompact group actions.

\begin{prop}[Pigeonhole]
\label{prop:pigeonhole}
Suppose a group $G$ acts cocompactly and isometrically on a metric space $X$.
Let $\mathcal{A}$ be a family of closed subspaces of $X$.
Suppose $\mathcal{A}$ is $G$--equivariant and locally finite, in the sense that each compact set of $X$ intersects only finitely many members of $\mathcal{A}$.
Then the stabilizer of each $A \in \mathcal{A}$ acts cocompactly on $A$.
Furthermore the members of $\mathcal{A}$ lie in finitely many $G$--orbits.
\end{prop}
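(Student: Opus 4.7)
The plan is to exploit cocompactness of the $G$-action together with the local finiteness of $\mathcal{A}$ in a pigeonhole-style argument. First I would fix a compact set $K\subset X$ with $G\cdot K = X$ and observe that local finiteness limits the members of $\mathcal{A}$ meeting $K$ to a finite list $A_1,\dots,A_n$.

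The second conclusion (finitely many $G$-orbits) is then immediate. Given any $A\in\mathcal{A}$, pick $a\in A$ and $g\in G$ with $g\cdot a\in K$; by $G$-equivariance, $g\cdot A\in\mathcal{A}$, and it contains the point $g\cdot a\in K$, so $g\cdot A$ is one of $A_1,\dots,A_n$, and $A$ lies in its $G$-orbit.

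For the first conclusion, fix $A\in\mathcal{A}$ and let $H = \Stab_G(A)$. Among $A_1,\dots,A_n$, let $A_{i_1},\dots,A_{i_m}$ be those lying in the orbit $G\cdot A$, and for each pick $h_j\in G$ with $h_j\cdot A = A_{i_j}$. I would then propose
\[
C \;=\; \bigcup_{j=1}^{m}\bigl(A\cap h_j^{-1}K\bigr)
\]
as a compact fundamental domain for the $H$-action on $A$. Compactness is clear, since each $h_j^{-1}K$ is compact and $A$ is closed in $X$, so each summand is compact. To verify $H\cdot C = A$, given $a\in A$ choose $g\in G$ with $g\cdot a\in K$; then $g\cdot A$ meets $K$ and lies in $G\cdot A$, so $g\cdot A = A_{i_j}$ for some $j$, whence $h_j^{-1}g\in H$. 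Setting $h = h_j^{-1}g$ yields $h\cdot a = h_j^{-1}(g\cdot a)\in h_j^{-1}K$, and $h\cdot a\in A$ since $h$ stabilizes $A$, so $h\cdot a\in C$ and $a\in H\cdot C$.

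I do not expect any real obstacle; the only content is spotting the correct compact set $C$, which is obtained by collecting, from each translate of $A$ that meets $K$, the piece of $A$ that pulls that intersection back to $A$. This is essentially the ``pigeonhole principle'' alluded to in the statement.
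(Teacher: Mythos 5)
Your proof is correct and rests on the same pigeonhole mechanism as the paper's: equivariance plus local finiteness force the translates of $A$ that meet a compact set $K$ with $G\cdot K=X$ to be finitely many, which pins the relevant group elements into finitely many cosets of $\Stab_G(A)$. The only difference is presentational---you construct the compact set $C=\bigcup_{j}\bigl(A\cap h_j^{-1}K\bigr)$ inside $A$ directly, while the paper covers $A$ by translates $g_i(K)$, sorts the $g_i$ into finitely many $H$--cosets, and then enlarges a translate of $K$ via a bounded Hausdorff-distance argument before intersecting with $A$; your direct construction is, if anything, slightly cleaner.
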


\begin{proof}
Let $K$ be a compact set whose $G$--translates cover~$X$.
Since $K$ intersects only finitely members of $\mathcal{A}$, the sets of $\mathcal{A}$ lie in finitely many orbits.
Thus we only need to establish that for each $A \in \mathcal{A}$ the group $H = \Stab_G(A)$ acts cocompactly on $A$.
Let $\{g_i\}$ be a set of group elements such that the
translates $g_i(K)$ cover~$A$ and each $g_i(K)$ intersects $A$.
If the sets $g_i^{-1}(A)$ and $g_j^{-1}(A)$ coincide, then
$g_jg_i^{-1}$ lies in $H$. 
It follows that the $g_i$ lie in only finitely many right cosets $Hg_i$.
In other words, the sets $g_i(K)$ lie in only finitely many $H$--orbits.
But any two $H$--orbits $Hg_i(K)$ and $Hg_j(K)$ lie at a finite Hausdorff
distance from each other.
Thus any $g_i(K)$ can be increased to a larger compact set $K'$ so that the
translates of $K'$ under~$H$ cover~$A$.
Since $A$ is closed, it follows that $H$ acts cocompactly on $A$, as desired.
\end{proof}

The proof of Theorem~\ref{thm:ConvexVertexGroup} will be developed over the course of the next several lemmas and definitions.

Let $\pi\colon X \to T$ be the $G$--equivariant continuous map given by Lemma~\ref{lem:MapToTree}.
For each edge $e$ of $T$, let $m_e$ denote the midpoint of $e$.  For each vertex $v$ of $T$, let $S(v)$ be the union of all segments of the form $[v,m_e]$ where $e$ is an edge adjacent to $v$; in other words, $S(v)$ is the union of all half-edges emanating from $v$.
Let $Q(e) \subset X$ be the preimage $\pi^{-1}(m_e)$, and let $Q(v) \subset X$ be the preimage $\pi^{-1}\bigl(S(v)\bigr)$.

\begin{lem}
\label{lem:CocompactEdge}
Let $G_e$ be the $G$--stabilizer of the edge $e$ of $T$.  Then $G_e$ acts cocompactly on $Q(e)$.
\end{lem}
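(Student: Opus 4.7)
The plan is to apply the Pigeonhole Proposition~\ref{prop:pigeonhole} to the $G$--equivariant family $\mathcal{A} = \set{Q(e')}{e' \text{ an edge of } T}$. Replacing $T$ by its barycentric subdivision if necessary (as in the proof of Lemma~\ref{lem:MapToTree}), I may assume that $G$ acts on $T$ without inversions, so that any element of $G$ fixing a midpoint $m_{e'}$ also fixes the edge $e'$ setwise.

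First I would check the three hypotheses of Proposition~\ref{prop:pigeonhole}. Each $Q(e') = \pi^{-1}(m_{e'})$ is closed in $X$ because $\pi$ is continuous and $\{m_{e'}\}$ is closed in $T$. The family $\mathcal{A}$ is $G$--equivariant since, by $G$--equivariance of $\pi$, we have $g \cdot Q(e') = \pi^{-1}(g \cdot m_{e'}) = Q(g \cdot e')$ for every $g \in G$.

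The main step is verifying local finiteness. In the CW--topology on the simplicial tree $T$, every compact subset is contained in the union of finitely many closed edges (this is the standard fact that compact sets in a CW complex meet only finitely many open cells). So if $K \subset X$ is compact, then $\pi(K)$ is compact in $T$, hence is contained in finitely many closed edges, and therefore meets only finitely many edge midpoints $m_{e'}$. Equivalently, $K$ intersects only finitely many members $Q(e')$ of $\mathcal{A}$.

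Applying Proposition~\ref{prop:pigeonhole} then yields that the setwise stabilizer $\Stab_G\bigl(Q(e)\bigr)$ acts cocompactly on $Q(e)$. To finish, I would identify this stabilizer with $G_e$. Clearly $G_e \le \Stab_G\bigl(Q(e)\bigr)$ by $G$--equivariance of $\pi$. Conversely, if $Q(e) \ne \emptyset$ and $g \cdot Q(e) = Q(e)$, then picking any $x \in Q(e)$ gives $g \cdot m_e = g \cdot \pi(x) = \pi(gx) = m_e$, so $g \in G_e$ (using the no--inversions reduction). If instead $Q(e) = \emptyset$ then the cocompactness conclusion is vacuous. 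The only mildly delicate point in the argument is confirming the CW--topology compactness fact underlying local finiteness; everything else is bookkeeping.
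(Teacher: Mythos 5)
Your proof is correct and follows essentially the same route as the paper: both verify closedness, $G$--equivariance, and local finiteness of the family $\set{Q(e')}{e' \text{ an edge of } T}$ using continuity of $\pi$ with respect to the CW--topology, identify $\Stab_G\bigl(Q(e)\bigr)$ with $G_e$ via the midpoint $m_e$, and then invoke Proposition~\ref{prop:pigeonhole}. Your extra attention to inversions and to the case $Q(e)=\emptyset$ is harmless tidying of points the paper leaves implicit.
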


\begin{proof}
The family $\mathcal{A} = \bigset{Q(e)}{\text{$e$ an edge of $T$}}$ is clearly $G$--equivariant.
Each $Q(e)$ is closed in $X$ since it is the preimage of the closed set $\{m_e\}$ under a continuous map.
We will show that $\mathcal{A}$ is locally finite, and that the $G$--stabilizer of each $Q(e)$ is equal to the group $G_e$.

In order to see local finiteness, let $K$ be a compact set in $X$.  Since $\pi \colon X \to T$ is continuous with respect to the CW topology on $T$, the image $\pi(K)$ intersects at most finitely many open edges of $T$.  In particular $\pi(K)$ contains only finitely many midpoints $m_e$ of edges.  It follows that $K$ intersects only finitely many sets $Q(e)$.

The equivariance of $\pi$ implies that an element $g \in G$ satisfies $g \bigl( Q(e) \bigr) = Q(e)$ if and only if $g(m_e) = m_e$.  Thus $\Stab\bigl( Q(e) \bigr) = \Stab(m_e) = G_e$.  The result now follows from Proposition~\ref{prop:pigeonhole}.
\end{proof}

\begin{lem}
\label{lem:CocompactVertex}
Let $G_v$ be the $G$--stabilizer of the vertex $v$ of $T$.
Then $G_v$ acts cocompactly on $Q(v)$.
\end{lem}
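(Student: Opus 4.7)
The plan is to apply the Pigeonhole Principle (Proposition~\ref{prop:pigeonhole}) to the family $\mathcal{A} = \bigset{Q(v)}{v \text{ a vertex of } T}$, in direct analogy with the proof of Lemma~\ref{lem:CocompactEdge}. The family is $G$--equivariant because $\pi$ is: for every $g \in G$ we have $g Q(v) = \pi^{-1}\bigl(g \cdot S(v)\bigr) = Q(gv)$. Each $Q(v)$ is closed in $X$ because $S(v)$ is closed in $T$ with the CW topology (its intersection with each closed edge of $T$ is either empty, a closed half-edge $[v,m_e]$, or the single midpoint $\{m_e\}$), and $\pi$ is continuous.

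The main verification is local finiteness of $\mathcal{A}$. Given a compact set $K \subset X$, the image $\pi(K)$ is compact in $T$ by continuity with respect to the CW topology, and a compact subset of a CW complex meets only finitely many open cells. Consequently $\pi(K)$ meets only finitely many of the half-stars $S(v)$, so $K$ meets only finitely many sets $Q(v)$. This is the only step where the CW continuity of $\pi$ (as opposed to mere set-theoretic equivariance) is essential.

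It remains to identify $\Stab_G\bigl(Q(v)\bigr)$. By equivariance of $\pi$, we have $gQ(v) = Q(v)$ if and only if $g \cdot S(v) = S(v)$; since $v$ is the unique vertex of $T$ contained in $S(v)$, this is equivalent to $gv = v$. Thus $\Stab_G\bigl(Q(v)\bigr) = G_v$, and the cocompactness of the $G_v$--action on $Q(v)$ follows at once from Proposition~\ref{prop:pigeonhole}. There is no substantive obstacle; the proof is essentially the edge case of Lemma~\ref{lem:CocompactEdge} with ``midpoint of edge'' replaced by ``half-star of vertex''.
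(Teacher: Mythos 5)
Your proof is correct and follows essentially the same route as the paper, which simply notes that $\pi(K)$ meets only finitely many sets $S(v)$ (since it meets only finitely many open cells of $T$) and then declares the rest identical to the proof of Lemma~\ref{lem:CocompactEdge}; you have just written out those identical steps (closedness, equivariance, stabilizer identification, and the appeal to Proposition~\ref{prop:pigeonhole}) explicitly.
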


\begin{proof}
We only need to observe that when projecting a compact set $K$ to the tree $T$, the image $\pi(K)$ intersects only finitely many sets $S(v)$ since it intersects only finitely many open edges of $T$. The rest of the proof is identical to the proof of Lemma~\ref{lem:CocompactEdge}.
\end{proof}

In the proof of Theorem~\ref{thm:ConvexVertexGroup} we assume that we have a splitting whose edge groups are convex, in other words each edge group $G_e$ stabilizes a closed convex subspace $Y_e$ on which it acts cocompactly.
Although $G_e$ also acts cocompactly on $Q(e)$, it is unlikely that $Q(e)$ itself is a convex subspace of $X$.
In the next lemma, we show that we can enlarge $Q(e)$ to its convex hull and preserve cocompactness.

\begin{lem}
Suppose the edge group $G_e$ cocompactly stabilizes a convex subspace $Y_e$ of the $\CAT(0)$ space $X$.
Let $C(e)$ be the closure of the convex hull of $Q(e)$.
Then $G_e$ acts cocompactly on $C(e)$.
\end{lem}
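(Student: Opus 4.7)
My plan is to show that $C(e)$ lies at finite Hausdorff distance from $Y_e$, and then transfer cocompactness from $Y_e$ to $C(e)$ using a standard fundamental-domain argument together with properness of $X$.

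First I would establish that $Q(e)$ and $Y_e$ lie at finite Hausdorff distance. Both sets are closed and $G_e$--invariant, and by Lemma~\ref{lem:CocompactEdge} together with the hypothesis, $G_e$ acts cocompactly on each. Pick any $p \in Q(e)$ and $q \in Y_e$. Cocompactness gives constants $R_1, R_2$ such that $Q(e) \subset \nbd{G_e \cdot p}{R_1}$ and $Y_e \subset \nbd{G_e \cdot q}{R_2}$, while the two orbits $G_e \cdot p$ and $G_e \cdot q$ lie within Hausdorff distance $d(p,q)$ of each other (since the isometric action preserves distances). Hence $Q(e)$ and $Y_e$ are within Hausdorff distance $D := R_1 + R_2 + d(p,q) < \infty$.

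Next I would use convexity of $Y_e$ to conclude that $C(e) \subset \cball{Y_e}{D}$ (the closed $D$--neighborhood of $Y_e$). In a $\CAT(0)$ space, the closed $r$--neighborhood of a closed convex set is itself closed and convex (this is a standard consequence of the convexity of the distance function). Since $Q(e) \subset \cball{Y_e}{D}$ and the latter is closed and convex, it contains $C(e) = \overline{\Hull \bigl( Q(e) \bigr)}$. In particular every point of $C(e)$ is within distance $D$ of $Y_e$.

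Finally, $C(e)$ is $G_e$--invariant, since $Q(e)$ is, and the convex hull and closure operations are both $G_e$--equivariant. Let $K \subset Y_e$ be a compact set whose $G_e$--translates cover $Y_e$, and set
\[
   K' \;=\; \cball{K}{D} \cap C(e).
\]
Then $K'$ is closed in the proper space $X$ and bounded, hence compact. For any $x \in C(e)$, pick $y \in Y_e$ with $d(x,y) \le D$ and $g \in G_e$ with $g \cdot y \in K$; then $g \cdot x \in \cball{K}{D} \cap C(e) = K'$. Thus the $G_e$--translates of $K'$ cover $C(e)$, completing the proof. The only real subtlety is the initial finite Hausdorff distance step, and everything else is routine once that is in place.
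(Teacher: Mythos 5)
Your proposal is correct and follows essentially the same route as the paper: both arguments show that $Q(e)$, and hence $C(e)$, lies in the closed $D$--neighborhood $\overline{\nbd{Y_e}{D}}$, which is closed and convex in the $\CAT(0)$ space, and then transfer cocompactness from $Y_e$ to $C(e)$ using properness of $X$. The only cosmetic difference is that you derive the containment $Q(e) \subseteq \overline{\nbd{Y_e}{D}}$ via an orbit/Hausdorff-distance estimate, while the paper gets it by placing a compact fundamental set for $Q(e)$ inside such a neighborhood and invoking $G_e$--invariance.
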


\begin{proof}
Let $K$ be a compact set of $X$ whose $G_e$--translates cover $Q(e)$.  Then $K$ is contained in the closed neighborhood $\overline{\nbd{Y_e}{D}}$ for some $D$. 
Since $X$ is proper, any compact set $K'$ whose translates cover $Y_e$ can be increased to a larger compact set $\overline{\nbd{K'}{D}}$ whose $G_e$--translates cover the closed neighborhood $\overline{\nbd{Y_e}{D}}$.
This neighborhood is a closed, convex, $G_e$--cocompact set containing $Q(e)$. Thus it contains $C(e)$, which is also $G_e$--cocompact.
\end{proof}

In the next proposition, we deal with the vertex groups.
We replace $Q(v)$ with a larger set $C(v)$.  We first show that $G_v$ acts cocompactly on $C(v)$.  Afterwards we will complete the proof of Theorem~\ref{thm:ConvexVertexGroup} by showing that $C(v)$ is a convex subspace of $X$.

\begin{prop}
For each vertex $v \in T$, let $C(v)$ be the union of $Q(v)$ together with all sets $C(e)$ such that $e$ is adjacent to $v$ in $T$.
Then $G_v$ acts cocompactly on $C(v)$.
\end{prop}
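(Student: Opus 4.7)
The plan is to combine the cocompact action of $G_v$ on $Q(v)$ from Lemma~\ref{lem:CocompactVertex} with the cocompact actions of the edge stabilizers on the convex hulls $C(e)$ already established, invoking finiteness of the graph of groups $\mathcal{G}$ to control the set of edges adjacent to $v$ up to the $G_v$--action. Concretely, I will first produce a single compact set whose $G_v$--translates cover $Q(v)$, and then enlarge it by adding one compact ``fundamental domain'' for each $G_v$--orbit of edges adjacent to $v$.

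First I will observe that the edges of $T$ incident to $v$ break into finitely many $G_v$--orbits. This is a standard Bass--Serre fact: since the quotient graph of the action $G \curvearrowright T$ is the finite underlying graph of $\mathcal{G}$, the star of $v$ in $T$ maps to the (finite) star of the image vertex in $\mathcal{G}$, and by the orbit--stabilizer correspondence for Bass--Serre trees, the $G_v$--orbits of edges incident to $v$ are in bijection with the edges of $\mathcal{G}$ incident to the image of $v$. Let $e_1,\dots,e_n$ be a set of representatives.

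Next I will assemble the compact set. By Lemma~\ref{lem:CocompactVertex} there is a compact $K_0 \subset X$ with $G_v \cdot K_0 \supseteq Q(v)$. For each $i$, since $G_{e_i}$ acts cocompactly on $C(e_i)$ by the preceding lemma, choose a compact $K_i$ with $G_{e_i} \cdot K_i \supseteq C(e_i)$. Here I will use that $G_{e_i} \le G_v$, which holds because the action on $T$ is without inversions so the stabilizer of an edge fixes both endpoints. Set $K = K_0 \cup K_1 \cup \cdots \cup K_n$, a compact subset of $X$. Given any point $x \in C(v)$, either $x \in Q(v)$, in which case $x \in G_v \cdot K_0 \subseteq G_v \cdot K$, or $x \in C(e)$ for some edge $e$ adjacent to $v$. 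In the latter case write $e = g \cdot e_i$ with $g \in G_v$, so $C(e) = g \cdot C(e_i) \subseteq g \cdot G_{e_i} \cdot K_i \subseteq G_v \cdot K$. Hence $G_v \cdot K \supseteq C(v)$, which is the desired cocompactness.

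I do not expect any serious obstacle here; the argument is essentially bookkeeping once one knows (a) the star of $v$ in $T$ has finitely many $G_v$--orbits of edges and (b) each edge stabilizer is contained in the stabilizer of its endpoint. The only point requiring mild care is the inclusion $G_{e_i} \le G_v$, which I will justify by noting that one can pass to a barycentric subdivision of $T$ if necessary (as was already done in the proof of Lemma~\ref{lem:MapToTree}) so that edge stabilizers fix both endpoints.
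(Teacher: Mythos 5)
Your argument is correct, and its second half (choosing one compact set for $Q(v)$, one for each orbit representative $C(e_i)$, taking the union, and using $G_{e_i}\le G_v$ together with $C(ge_i)=gC(e_i)$) is exactly how the paper assembles the compact set. The only genuine difference is how you obtain the key finiteness statement that the edges of $T$ incident to $v$ fall into finitely many $G_v$--orbits. You deduce it from finiteness of the quotient graph $G\backslash T$ (the underlying graph of $\mathcal{G}$) via standard Bass--Serre theory; this is fine under the usual convention that the graph of groups is finite, though note the small bookkeeping point that a loop of $\mathcal{G}$ at the image of $v$ can give \emph{two} $G_v$--orbits of incident edges, so the correspondence is with edge-ends rather than edges --- this does not affect finiteness. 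The paper deliberately avoids invoking finiteness of $G\backslash T$ (it remarks that one \emph{could} arrange a finite quotient, but does not): instead it applies Proposition~\ref{prop:pigeonhole} to the cocompact action of $G_v$ on $Q(v)$ from Lemma~\ref{lem:CocompactVertex}, using that the walls $Q(e)$ with $e$ adjacent to $v$ form a locally finite $G_v$--equivariant family of closed sets (as in the proof of Lemma~\ref{lem:CocompactEdge}), to conclude directly that these $Q(e)$, and hence the $C(e)$, lie in finitely many $G_v$--orbits. What the paper's route buys is independence from any finiteness hypothesis on the underlying graph and from any replacement of $T$ by a minimal subtree (which would change the vertex groups); what your route buys is brevity, at the cost of leaning on the finiteness of $\mathcal{G}$, which Theorem~\ref{thm:ConvexVertexGroup} does not state explicitly.
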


\begin{proof}
Since $G$ is finitely generated, we could have chosen $T$ to be a tree on which $G$ acts with quotient a finite graph (using standard Bass--Serre theory techniques).
Instead, we will use the proof of Lemma~\ref{lem:CocompactEdge} to show that each vertex in the quotient graph $G\backslash T$ has finite valence.
For each vertex $v \in T$, the action of $G_v$ permutes the subspaces $Q(e)$ such that $e$ is adjacent to $v$.  Since these subspaces form a locally finite family, Proposition~\ref{prop:pigeonhole} implies that they lie in finitely many $G_v$--orbits.
It follows that there are finitely many $G_v$--orbits of sets $C(e)$ where $e$ is adjacent to $v$.

Let $C(e_1),\dots,C(e_\ell)$ contain one from each $G_v$--orbit of the sets $C(e)$.
Let $K_i$ be a compact set whose $G_{e_i}$--translates cover $C(e_i)$.  Let $K$ be a compact set whose $G_v$--translates cover $Q(v)$.
Then $C(v)$ is covered by the $G_v$--translates of the compact set $K \cup K_1 \cup \cdots \cup K_\ell$.
\end{proof}

We now complete the proof of Theorem~\ref{thm:ConvexVertexGroup} by showing that the subspace $C(v)$ constructed above is convex, which is the most delicate part of the argument.  

\begin{proof}[Proof of Theorem~\ref{thm:ConvexVertexGroup}]
Fix a vertex $v \in T$.  We will show that $C(v)$ is convex in $X$. Let $e$ be any edge adjacent to $v$.  Recall that $Q(e)$ is defined to be $\pi^{-1}(m_e)$, where $m_e$ is the midpoint of the edge $e$.  Notice that $m_e$ splits the tree $T$ into two halfspaces.  Thus $Q(e)$ can be considered as a ``wall'' in $X$ that separates $X$ into two halfspaces, each a preimage of a halfspace of $T$. The key property of this separation that we need is that any path in $X$ from one halfspace of $Q(e)$ to the other must intersect their separating wall $Q(e)$.   (These halfspaces in $X$ are preimages of connected sets, so they need not be connected.  This will not cause any problems for our proof.)  

Let $H(e)$ be the closed halfspace of $T$ bounded by the midpoint $m_e$ and pointing away from the vertex $v$.
Let $O(e) \subset X$ be the intersection $C(e) \cap \pi^{-1} \bigl( H(e) \bigr)$.

Our strategy is to first prove that $C'(v)$ is convex, where $C'(v)$ is the union of $Q(v)$ with all sets $O(e)$ such that $e$ is adjacent to $v$.
We will complete the proof by showing that $C'(v)$ is actually equal to $C(v)$.

Choose two points $p,q \in C'(v)$.  We claim that the geodesic $c$ in $X$ from $p$ to $q$ lies inside $C'(v)$.  We have several cases depending on which subspaces they are chosen from.

\emph{Case~1}. Suppose $p,q \in Q(v)$.
Any maximal subsegment $c'$ of $c$ outside $Q(v)$ has its endpoints in $Q(e)$ for some $e$ adjacent to $v$.  By convexity, $c'$ must be contained in $C(e)$.  But $c'$ is outside $Q(v)$, so it must lie inside $O(e)$.  Therefore $c$ lies entirely inside $C'(v)$.

\emph{Case~2}. Suppose $p,q \in O(e)$ for some $e$ adjacent to $v$.  Any maximal subsegment $c'$ of $c$ outside $O(e)$ has its endpoints in $Q(e)$.  Therefore $c'$ is a path with both endpoints in $Q(v)$.  By Case~1, the path $c'$ lies in $C'(v)$.  Therefore $c$ lies in $C'(v)$ as well.

\emph{Case~3}. Suppose $p \in O(e)$ and $q \in Q(v)$.  By separation, $c$ contains a point $r \in Q(e)$.  By the first two cases, the subsegments $[p,r]$ and $[r,q]$ each lie in $C'(v)$.  Therefore we are done.

\emph{Case~4}. Suppose $p \in O(e)$ and $q \in O(e')$ for $e\ne e'$ two edges adjacent to $v$.
In this case, $p$ and $q$ are separated by two walls, $Q(e)$ and $Q(e')$.  As in the previous case, we can choose $r$ and $r'$ on $c$ such that $r \in Q(e)$ and $r' \in Q(e')$.
By the first two cases, the subsegments $[p,r]$, $[r,r']$ and $[r',q]$ each lie in $C'(v)$.

We have shown above that $C'(v)$ is convex.
We observe that $C'(v)$ is closed, since it is equal to the union of a locally finite family of closed sets.
Now let us see that $C'(v)$ is equal to our original $C(v)$.
Clearly $C'(v) \subseteq C(v)$.  Recall that $C(e)$ is the closure of the convex hull on $Q(e)$.  By the above argument each $C(e)$ is contained in the closed, convex set $C'(v)$.  Therefore $C(v) \subseteq C'(v)$, as needed.
\end{proof}

\begin{thm}
\label{thm:ComponentCAT0}
Let $G$ be a one-ended group acting geometrically on a $\CAT(0)$ space with isolated flats.
Let $\mathcal{G}$ be any peripheral splitting of $G$ with respect to the natural peripheral structure $\P$.
Then each component vertex group $H$ of the splitting $\mathcal{G}$ acts geometrically on a $\CAT(0)$ space with isolated flats.
Furthermore, the inclusion $H \hookrightarrow G$ induces a topological embedding $\boundary H \hookrightarrow \boundary G$.
\end{thm}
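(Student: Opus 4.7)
The plan is to invoke the Convex Splitting Theorem (Theorem~\ref{thm:ConvexVertexGroup}) to produce a convex subspace on which $H$ acts geometrically, and then to invoke the converse characterization of isolated flats (Theorem~\ref{thm:HKConverse}) to verify the isolated flats property on this subspace.

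First I would verify that every edge group of $\mathcal{G}$ is a convex subgroup of $G$. By Definition~\ref{def:peripheral}, each edge group $G_e$ of a peripheral splitting is contained in some peripheral vertex group, which is virtually abelian by Theorem~\ref{thm:PeriodicFlats}. Hence $G_e$ is itself virtually abelian, and the Flat Torus Theorem furnishes a flat (or geodesic line) in $X$ stabilized cocompactly by $G_e$. Thus every edge group is convex, so Theorem~\ref{thm:ConvexVertexGroup} applies and produces a closed convex subspace $Y \subset X$ on which $H$ acts geometrically. In particular, $Y$ is itself a proper $\CAT(0)$ space.

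Next I would show that $Y$ has isolated flats. By Bowditch's analysis of peripheral splittings \cite[Thm~1.3]{Bowditch01}, $H$ inherits a relatively hyperbolic structure: its induced peripheral subgroups are precisely the infinite intersections of $H$ with conjugates of members of $\P$, taken up to $H$-conjugacy. Since each $P \in \P$ is virtually abelian, every induced peripheral subgroup of $H$ is virtually abelian as well. Now $H$ acts geometrically on the $\CAT(0)$ space $Y$ and is relatively hyperbolic with respect to a family of virtually abelian subgroups, so Theorem~\ref{thm:HKConverse} implies that $Y$ has isolated flats.

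For the boundary embedding, I would use the standard fact that the inclusion of a closed convex subspace $Y$ of a proper $\CAT(0)$ space $X$ extends continuously to a map $\bar{Y} \to \bar{X}$ whose restriction to the visual boundary is injective, because a geodesic ray in $Y$ based at a point $q \in Y$ remains a geodesic ray in $X$ based at $q$. Being a continuous injection between compact Hausdorff spaces, this restriction is a topological embedding, evidently $H$-equivariant. By Theorem~\ref{thm:WellDefinedBoundary}, $\boundary H$ is $H$-equivariantly identified with $\boundary Y$, yielding the topological embedding $\boundary H \hookrightarrow \boundary G$. The main obstacle is quoting Bowditch's inheritance theorem in precisely the form needed, since the peripheral splitting $\mathcal{G}$ in Theorem~\ref{thm:ComponentCAT0} is not assumed to be locally finite and so Theorem~\ref{thm:BowditchComponent} does not directly apply; all other steps are direct applications of results already established.
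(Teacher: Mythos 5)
Your proposal is correct and follows essentially the same route as the paper: virtually abelian edge groups are convex by the Flat Torus Theorem, Theorem~\ref{thm:ConvexVertexGroup} gives a closed convex subspace $Y$ with a geometric $H$--action, the relative hyperbolicity of $(H,\mathbb{Q})$ from \cite[Thm.~1.3]{Bowditch01} combined with Theorem~\ref{thm:HKConverse} gives isolated flats, and convexity of $Y$ in $X$ gives the boundary embedding. Your closing worry is unfounded, since the paper likewise invokes Bowditch's Theorem~1.3 directly (which needs no local finiteness) rather than Theorem~\ref{thm:BowditchComponent}.
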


\begin{proof}
Let $\mathbb{Q}$ be the family of infinite subgroups of $H$ of the form $H \cap P$ for $P \in \P$.  By a result of Bowditch \cite{Bowditch01}, we get that $(H,\mathbb{Q})$ is relatively hyperbolic with respect to a family of virtually abelian subgroups.
In any peripheral splitting of $G$, the edge groups are virtually abelian.  By the Flat Torus Theorem, it follows that all edge groups of $\mathcal{G}$ are convex subgroups of $G$.
It follows from Theorem~\ref{thm:ConvexVertexGroup} that the vertex group $H$ acts geometrically on a $\CAT(0)$ space $Y$, which is obtained as a closed convex subspace of $X$.
The space $Y$ has isolated flats by Theorem~\ref{thm:HKConverse}.
Since $Y$ is convex in $X$, the visual boundary $\boundary H = \boundary Y$ embeds in $\boundary X = \boundary G$
\end{proof}

We obtain the following corollary by combining Theorem~\ref{thm:ComponentCAT0} with Theorem~\ref{thm:BowditchComponent}.

\begin{cor}
\label{cor:ComponentCAT0}
Let $G$ be a one-ended group acting geometrically on a $\CAT(0)$ space with isolated flats.
Suppose the maximal peripheral splitting $\mathcal{G}$ is locally finite.
Then each component vertex group $H$ of $\mathcal{G}$ is a one-ended $\CAT(0)$ group with isolated flats that is atomic. \qed
\end{cor}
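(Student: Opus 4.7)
The plan is to assemble this corollary directly from the two theorems named just before its statement, checking that the hypotheses of Theorem~\ref{thm:BowditchComponent} genuinely apply in the isolated flats setting.

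First, I would invoke Theorem~\ref{thm:HKMain}(\ref{item:RelHyp}) to equip $G$ with its natural peripheral structure $\P$, namely the collection of all maximal virtually abelian subgroups of rank at least two. Each $P \in \P$ is virtually $\Z^n$ for some $n \ge 2$, so $P$ is finitely presented, one-ended, and has only finite torsion subgroups. Together with the hypothesis that $G$ is one-ended and that the maximal peripheral splitting $\mathcal{G}$ is locally finite, this verifies every hypothesis of Theorem~\ref{thm:BowditchComponent}.

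Next, I would apply Theorem~\ref{thm:BowditchComponent} to each component vertex group $H$ of $\mathcal{G}$, using the family $\mathbb{Q}$ of infinite intersections $H \cap P$ for $P \in \P$. This immediately delivers two of the three desired conclusions about $H$: that $H$ is one-ended, and that $(H,\mathbb{Q})$ does not split over peripheral subgroups. In parallel, I would apply Theorem~\ref{thm:ComponentCAT0} to the same splitting $\mathcal{G}$ (whose edge groups are virtually abelian and therefore convex by the Flat Torus Theorem) to conclude that $H$ acts geometrically on a $\CAT(0)$ space with isolated flats.

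Combining these facts produces exactly the statement of the corollary. There is essentially no obstacle here beyond bookkeeping, since the two theorems were designed to be joined at this step; the only point requiring a sentence of care is confirming that the class $\P$ of maximal higher-rank virtually abelian subgroups satisfies the one-ended, finitely presented, torsion-free-in-the-relevant-sense hypotheses of Theorem~\ref{thm:BowditchComponent}, which is immediate from their virtually abelian structure.
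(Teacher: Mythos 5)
Your proposal is correct and is exactly the paper's argument: the paper proves this corollary simply by combining Theorem~\ref{thm:ComponentCAT0} with Theorem~\ref{thm:BowditchComponent}, applied to the natural peripheral structure from Theorem~\ref{thm:HKMain}. Your extra sentence verifying that higher-rank virtually abelian peripheral subgroups are finitely presented, one-ended, and without infinite torsion subgroups is the same routine check the paper leaves implicit, so there is nothing further to add.
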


\section{Local connectivity at rank one points}
\label{sec:RankOneLocalCon}

In this section, we begin the proof of Theorem~\ref{thm:MainThm}.  Our goal is to study the boundary of a one-ended $\CAT(0)$ group with isolated flats.  We need to show that if the maximal peripheral splitting is locally finite, then the boundary of the $\CAT(0)$ space is locally connected.
The first step of the proof, which is the main goal of the current section, 
is to show that the visual boundary $\boundary X$ is locally connected at any point not in the boundary of a flat, i.e., the rank one points.

We use decomposition theory to prove Corollary~\ref{cor:QuotientUSC}, which states that the map $\boundary X \to \boundary (G,\P)$ given by Theorem~\ref{thm:BoundaryQuotient} from the visual boundary to the Bowditch boundary is upper semicontinuous.
Using this structure we are able to pull back local connectivity from the Bowditch boundary to the $\CAT(0)$ boundary, but only at rank one points (see Corollary~\ref{cor:RankOneLC}).

Many of our techniques in the proof of the main theorem depend on elementary results from decomposition theory, which we summarize below.
We refer the reader to \cite{Daverman86} for more details.
Recall that a \emph{decomposition} $\mathcal{D}$ of a topological space $M$ is a partition of $M$.  Decompositions are equivalent to quotient maps in the following sense.
A decomposition $\mathcal{D}$ of a space $M$ has a natural quotient map $\pi\colon M \to M/\mathcal{D}$ obtained by collapsing each equivalence class of $\mathcal{D}$ to a point and endowing the result with the quotient topology.
Conversely, any quotient map $M \to N$ gives rise to an associated decomposition of $M$ consisting of the family of point preimages.

\begin{defn}
Let $\mathcal{D}$ be a decomposition of a space $M$, and let $S$ be a subset of $M$.
The \emph{$\mathcal{D}$--saturation} of $S$ is the union of $S$ together with all sets $D \in \mathcal{D}$ that intersect $S$.
\end{defn}

\begin{defn}
A decomposition $\mathcal{D}$ of a Hausdorff space $M$ is \emph{upper semicontinuous} if each $D \in \mathcal{D}$ is compact, and if, for each $D \in \mathcal{D}$ and each open subset $U$ of $M$ containing $D$, there exists another open subset $V$ of $M$ containing $D$ such that the $\mathcal{D}$--saturation of $V$ is contained in $U$.  In other words, every $D' \in \mathcal{D}$ intersecting $V$ is contained in $U$.
A quotient map is \emph{upper semicontinuous} if its associated decomposition is upper semicontinuous.
\end{defn}

The proof of the following result is left as a routine exercise to the reader (\emph{cf.}\ Proposition~I.1.1 of \cite{Daverman86}).

\begin{prop}
\label{prop:TLSaturationIsClosed}
Let $\mathcal{D}$ be a decomposition of a Hausdorff space $M$. The following are equivalent:
   \begin{enumerate}
   \item $\mathcal{D}$ is upper semicontinuous.
   \item For each open set $U$ of $M$, let $U^*$ equal the union of all members of $\mathcal{D}$ contained in $U$. Then $U^*$ is open.
   \item For each closed set $C$ of $M$, the $\mathcal{D}$--saturation of $C$ is closed. \qed
   \end{enumerate}
\end{prop}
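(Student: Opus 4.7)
The plan is to verify the three equivalences by a short cyclic argument that exploits the partition property of $\mathcal{D}$ together with a complementation trick. I would go (2) $\Leftrightarrow$ (3) first as a stand-alone observation, then prove (1) $\Rightarrow$ (2) and (2) $\Rightarrow$ (1).

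First I would establish (2) $\Leftrightarrow$ (3) via complementation. Given a closed set $C$, let $U$ be its open complement in $M$. Since $\mathcal{D}$ partitions $M$, every point of $M$ lies in a unique member of $\mathcal{D}$, which either meets $C$ (placing the point in the $\mathcal{D}$--saturation $C^{*}$) or lies entirely in $U$ (placing the point in $U^{*}$). Thus $C^{*}$ and $U^{*}$ are complementary subsets of $M$, and one is closed if and only if the other is open.

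Next, for (1) $\Rightarrow$ (2), I would take an open set $U$ and a point $x \in U^{*}$. By definition $x$ lies in a member $D \in \mathcal{D}$ with $D \subset U$, and upper semicontinuity supplies an open neighborhood $V$ of $D$ whose $\mathcal{D}$--saturation lies in $U$. For any $y \in V$, the member $D'$ of $\mathcal{D}$ containing $y$ meets $V$, so $D' \subset U$ and hence $y \in U^{*}$. This shows $V \subset U^{*}$, so $U^{*}$ is open.

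Finally, for (2) $\Rightarrow$ (1), given $D \in \mathcal{D}$ and an open set $U \supset D$, I would take $V := U^{*}$. By (2) this $V$ is open; it contains $D$ because $D \subset U$; and if $D' \in \mathcal{D}$ meets $V$, then $D'$ shares a point with some member of $\mathcal{D}$ that is contained in $U$, forcing $D'$ to equal that member by disjointness, and so $D' \subset U$. Thus the $\mathcal{D}$--saturation of $V$ equals $V$ itself and is contained in $U$, verifying the neighborhood condition in the definition of upper semicontinuity. Compactness of the members of $\mathcal{D}$ is part of the hypothesis of upper semicontinuity and is simply carried along without extra argument. There is no substantive obstacle; the whole proof is formal manipulation of the partition structure.
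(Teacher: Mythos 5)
Your proof is correct and is exactly the routine argument the paper has in mind: the paper leaves this proposition as an exercise (citing Proposition~I.1.1 of Daverman), and the standard proof is the same complementation argument for the equivalence of (2) and (3) together with the saturation arguments for (1)~$\Leftrightarrow$~(2). Your observation that compactness of the members cannot be recovered from (2) or (3) and must simply be carried along as a standing hypothesis is the correct way to read the statement.
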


\begin{prop}
\label{prop:Monotone}
Let $\mathcal{D}$ be an upper semicontinuous decomposition of a space $M$. Let $\pi\colon M \to M/\mathcal{D}$ be the natural quotient map.
\begin{enumerate}
  \item \cite[Prop.~I.3.1]{Daverman86} $\pi$ is a proper map; \emph{i.e.}, a set $C \subseteq M/\mathcal{D}$ is compact if and only if $\pi^{-1}(C)$ is compact.
  \item \cite[Prop;~I.4.1]{Daverman86} Suppose each member of $\mathcal{D}$ is connected.
  A set $C \subseteq M/\mathcal{D}$ is connected if and only if $\pi^{-1}(C)$ is connected.
\end{enumerate}
\end{prop}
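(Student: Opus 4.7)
The plan is to prove both parts by combining continuity of $\pi$ (which is automatic for a quotient map) with the upper semicontinuity criterion from Proposition~\ref{prop:TLSaturationIsClosed}, especially its characterization through saturations of closed sets. The easy half of each iff is just that continuous images of compact (resp.\ connected) sets are compact (resp.\ connected), so $\pi^{-1}(C)$ compact implies $C=\pi(\pi^{-1}(C))$ compact, and similarly for connectedness. The real content is in the reverse implications, where upper semicontinuity must do the work of transporting local information from $C$ back up to the saturated preimage $\pi^{-1}(C)$.

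For part~(1), I would start with a compact $C\subseteq M/\mathcal{D}$ and an open cover $\{U_\alpha\}$ of $\pi^{-1}(C)$. The key observation is that $\pi^{-1}(C)$ is saturated, hence it is a union of decomposition elements $D$, each of which is compact by the definition of upper semicontinuity. So each such $D$ is covered by finitely many $U_\alpha$; let $V_D$ denote their union. Applying upper semicontinuity in its original form gives an open $W_D\supseteq D$ whose saturation lies in $V_D$. To convert this to a statement on $M/\mathcal{D}$, I would pass to the set $(V_D)^*$ (the union of all decomposition elements contained in $V_D$), which contains $D$ and is open by Proposition~\ref{prop:TLSaturationIsClosed}(2); since it is saturated, $\pi\bigl((V_D)^*\bigr)$ is open in $M/\mathcal{D}$. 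These images cover $C$, extract a finite subcover, and pull back to a finite subcover of $\pi^{-1}(C)$ by $U_\alpha$'s.

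For part~(2), assuming each $D\in\mathcal{D}$ is connected and $C$ is connected, I would argue by contradiction: suppose $\pi^{-1}(C)=A\sqcup B$ with $A$, $B$ nonempty and relatively open (equivalently, relatively closed) in $\pi^{-1}(C)$. Because every $D\subseteq\pi^{-1}(C)$ is connected and the two pieces $A$, $B$ partition $\pi^{-1}(C)$, each such $D$ lies entirely in one piece, so both $A$ and $B$ are saturated. Writing $B=E\cap\pi^{-1}(C)$ for some $E$ closed in $M$, the saturation $E^{\mathrm{sat}}=\pi^{-1}(\pi(E))$ is closed in $M$ by Proposition~\ref{prop:TLSaturationIsClosed}(3). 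The main lemma to verify is $E^{\mathrm{sat}}\cap\pi^{-1}(C)=B$: the inclusion $\supseteq$ is automatic, and for $\subseteq$ one notes that if $D\subseteq\pi^{-1}(C)$ meets $E$, then it meets $E\cap\pi^{-1}(C)=B$, so by connectedness of $D$ it is contained in $B$. Thus $\pi(B)=\pi(E^{\mathrm{sat}})\cap C$ is closed in $C$ (since $\pi(E^{\mathrm{sat}})$ is closed in $M/\mathcal{D}$, its preimage being the saturated closed set $E^{\mathrm{sat}}$), and by symmetry so is $\pi(A)$, giving a disconnection of $C$.

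The main obstacle in both parts is the same subtle point: saturations of open sets need not be open under upper semicontinuity, so one cannot directly push $A$ or $W_D$ down to obtain an open subset of $M/\mathcal{D}$. The workarounds are to either pass to the interior of the saturation via $(V_D)^*$ (using the equivalent characterization in Proposition~\ref{prop:TLSaturationIsClosed}(2)) for the compactness argument, or to dualize to closed sets and use that saturations of closed sets are closed for the connectedness argument. Once this dichotomy is handled cleanly, the rest of each proof is formal.
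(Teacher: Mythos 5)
Your argument is correct; the paper gives no proof of this proposition but cites Daverman (Props.~I.3.1 and I.4.1), and what you write is essentially the standard argument from decomposition theory that those citations refer to: for properness, cover each compact element $D\subseteq\pi^{-1}(C)$ by finitely many cover sets, pass to the saturated open set $(V_D)^*$ (open by Proposition~\ref{prop:TLSaturationIsClosed}), push down to an open cover of $C$, and pull a finite subcover back; for monotonicity, use connectedness of the elements to see a separation of $\pi^{-1}(C)$ is saturated, then use closedness of saturations of closed sets to push the separation down to $C$. Two cosmetic remarks: the intermediate open set $W_D$ is unnecessary, since $D\subseteq(V_D)^*$ already holds directly, and at the end of part~(2) you should state explicitly that $\pi(A)$ and $\pi(B)$ are disjoint and cover $C$ (which follows because $A$ and $B$ are saturated and $\pi$ is surjective) to complete the disconnection.
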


\begin{prop}
\label{prop:PullBackLC}
Let $\mathcal{D}$ be an upper semicontinuous decomposition of $M$.
Suppose each member of $\mathcal{D}$ is connected.
Let $\{x\}$ be a singleton member of $\mathcal{D}$.
If $M/\mathcal{D}$ is locally connected at the point $\pi(x)$, then $M$ is locally connected at $x$.
\end{prop}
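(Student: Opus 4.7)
The plan is to show directly that every open neighborhood of $x$ contains a connected open neighborhood of $x$, by transporting the local connectedness hypothesis from $M/\mathcal{D}$ up to $M$ via the natural saturation construction provided by upper semicontinuity.

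First I would take an arbitrary open set $U \subseteq M$ with $x \in U$, and consider the set $U^*$ consisting of the union of all members of $\mathcal{D}$ contained in $U$. The singleton hypothesis $\{x\} \in \mathcal{D}$ is used precisely here: it guarantees $x \in U^*$, since $\{x\}$ is a member of $\mathcal{D}$ contained in $U$. By Proposition~\ref{prop:TLSaturationIsClosed} applied to the upper semicontinuous decomposition $\mathcal{D}$, the set $U^*$ is open in $M$. Moreover, $U^*$ is by construction $\mathcal{D}$--saturated, which means $\pi^{-1}\bigl(\pi(U^*)\bigr) = U^*$, so by the definition of the quotient topology, $\pi(U^*)$ is an open neighborhood of $\pi(x)$ in $M/\mathcal{D}$.

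Next I would invoke the hypothesis that $M/\mathcal{D}$ is locally connected at $\pi(x)$ to choose a connected open neighborhood $V$ of $\pi(x)$ with $V \subseteq \pi(U^*)$. The candidate neighborhood of $x$ in $M$ is then $W = \pi^{-1}(V)$. This set is open by continuity of $\pi$, it contains $x$ because $\pi(x) \in V$, and it satisfies $W \subseteq \pi^{-1}\bigl(\pi(U^*)\bigr) = U^* \subseteq U$. Finally, since each member of $\mathcal{D}$ is connected and $V$ is connected, Proposition~\ref{prop:Monotone}(2) (the monotone preimage statement) implies that $W = \pi^{-1}(V)$ is connected. Thus $W$ is a connected open neighborhood of $x$ inside $U$, establishing local connectedness of $M$ at $x$.

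I do not expect any serious obstacle: the argument is a standard saturation/lifting manipulation. The only subtle points are the two places where the hypotheses are essential, and neither is difficult. The singleton hypothesis $\{x\} \in \mathcal{D}$ is needed to guarantee that the saturated open set $U^*$ actually contains $x$ (otherwise $x$ could be deleted when passing to saturations); and the connectedness of every member of $\mathcal{D}$ is needed for the preimage of the connected set $V$ to be connected via Proposition~\ref{prop:Monotone}(2). Once these two ingredients are in place, the proof amounts to unwinding definitions.
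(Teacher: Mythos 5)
Your proof is correct and follows essentially the same route as the paper's: saturate the given neighborhood via Proposition~\ref{prop:TLSaturationIsClosed}, push down, use local connectedness of $M/\mathcal{D}$ at $\pi(x)$, and pull the connected neighborhood back using Proposition~\ref{prop:Monotone}. You merely spell out two details the paper leaves implicit (that the singleton hypothesis puts $x$ in $U^*$, and that saturation makes $\pi(U^*)$ open), so there is nothing to change.
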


\begin{proof}
Let $U$ be a neighborhood of $x$ in $M$.
By Proposition~\ref{prop:TLSaturationIsClosed} there is a saturated neighborhood $U^*$ of $x$ contained in $U$.
Since $M/\mathcal{D}$ is locally connected at $\pi(x)$, the open set $\pi(U^*)$ contains a connected open neighborhood $\bar{V}$ of $\pi(x)$.
By Proposition~\ref{prop:Monotone}, the preimage $\pi^{-1}(\bar{V})$ is a connected open neighborhood of $x$ contained in $U$.
\end{proof}

\begin{defn}[Subspace decomposition]
Let $\mathcal{D}$ be a decomposition of a Hausdorff space $M$.  Let $W$ be an open $\mathcal{D}$--saturated subspace of $M$. The \emph{induced subspace decomposition} of $W$ is the decomposition consisting of all members of $\mathcal{D}$ that are contained in $W$.
If $\mathcal{D}$ is upper semicontinuous, then the induced subspace decomposition is as well.
\end{defn}

\begin{prop}[\cite{Daverman86}, Prop.~I.2.2]
\label{prop:Metrizable}
If $M$ is compact, metrizable and $\mathcal{D}$ is an upper semicontinuous decomposition of $M$, then $M/\mathcal{D}$ is metrizable.
\end{prop}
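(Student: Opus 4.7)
The plan is to apply Urysohn's metrization theorem, which characterizes metrizable compact Hausdorff spaces as exactly the second countable ones. Since $M/\mathcal{D}$ is the continuous image of the compact space $M$ under the quotient map $\pi$, it is automatically compact, and the work reduces to verifying Hausdorffness and second countability. Both will be straightforward consequences of upper semicontinuity (via Proposition~\ref{prop:TLSaturationIsClosed}) combined with second countability of the compact metric space $M$.

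For Hausdorffness, I would begin with distinct equivalence classes $D_1,D_2\in\mathcal{D}$; these are disjoint compact subsets of the normal space $M$, so they can be separated by disjoint open sets $U_1,U_2$. For each $U_i$, let $U_i^*$ be the union of all members of $\mathcal{D}$ contained in $U_i$, which is open by Proposition~\ref{prop:TLSaturationIsClosed} and saturated by construction. Since $D_i\subseteq U_i^*$ and $U_1^*\cap U_2^*\subseteq U_1\cap U_2=\emptyset$, the images $\pi(U_1^*)$ and $\pi(U_2^*)$ are disjoint open neighborhoods separating the two points of $M/\mathcal{D}$.

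For second countability, I would fix a countable base $\mathcal{B}$ for $M$ and let $\mathcal{C}$ denote the (still countable) collection of all finite unions of elements of $\mathcal{B}$. For each $W\in\mathcal{C}$, the set $W^*$ of points whose $\mathcal{D}$--class is entirely contained in $W$ is open and saturated by Proposition~\ref{prop:TLSaturationIsClosed}, so $\pi(W^*)$ is open in $M/\mathcal{D}$. To see that $\{\pi(W^*):W\in\mathcal{C}\}$ is a base, take any open $V\subseteq M/\mathcal{D}$ and any point $\pi(D)\in V$. Then $\pi^{-1}(V)$ is an open, saturated neighborhood of the compact set $D$, so by compactness finitely many elements of $\mathcal{B}$ cover $D$ while each is contained in $\pi^{-1}(V)$; calling their union $W\in\mathcal{C}$, we have $D\subseteq W\subseteq\pi^{-1}(V)$, hence $D\subseteq W^*\subseteq\pi^{-1}(V)$, and therefore $\pi(D)\in\pi(W^*)\subseteq V$. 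Urysohn's theorem then yields the desired metric on $M/\mathcal{D}$.

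There is no serious obstacle here; the entire content of the argument is the device of replacing each open set $U\subseteq M$ by its saturated interior $U^*$, which by Proposition~\ref{prop:TLSaturationIsClosed} is again open and still contains any $\mathcal{D}$--class originally contained in $U$. The only small point to keep in mind is that for any saturated set $S$ one has $\pi^{-1}(\pi(S))=S$, so that openness of $W^*$ in $M$ is equivalent to openness of $\pi(W^*)$ in the quotient.
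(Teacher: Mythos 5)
Your argument is correct. The paper itself gives no proof of this statement---it is quoted verbatim from Daverman's book (Prop.~I.2.2)---so there is no in-paper argument to compare against; your Urysohn-style proof is the standard one for this fact. Both steps are sound: Hausdorffness follows from separating two classes by disjoint open sets in the normal space $M$ and then shrinking to the saturated open sets $U_i^*$ furnished by Proposition~\ref{prop:TLSaturationIsClosed}, and second countability follows from the countable family $\pi(W^*)$ indexed by finite unions $W$ of basic open sets, using compactness of each class $D$ to find such a $W$ with $D\subseteq W\subseteq\pi^{-1}(V)$ and the inclusion $W^*\subseteq W$ to conclude $\pi(W^*)\subseteq V$. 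Together with compactness of the quotient, Urysohn's metrization theorem applies. The one point worth making explicit (which you do at the end) is that saturation gives $\pi^{-1}(\pi(S))=S$, so openness of the saturated sets upstairs really does translate into openness of their images in the quotient topology.
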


\begin{defn}
A collection of subsets $\mathcal{A}$ in a metric space is a \emph{null family} if, for each $\epsilon >0$, only finitely many of the sets $A \in \mathcal{A}$ have diameter greater than $\epsilon$.
\end{defn}

Note that if $M$ is compact and metrizable, then being a null family does not depend on the choice of metric on $M$.

The following property of null families is related to the definition of upper semicontinuous, but here we do not require that the members of $\mathcal{A}$ be disjoint.

\begin{prop}
\label{prop:NullAlmostUSC}
Let $\mathcal{A}$ be a null family of compact sets in a metric space $M$.
Suppose $q \in M$ is not contained in any member of the family $\mathcal{A}$.
Then each neighborhood $U$ of $q$ contains a smaller neighborhood $V$ of $q$ such that each $A \in \mathcal{A}$ intersecting $V$ is contained in $U$.
\end{prop}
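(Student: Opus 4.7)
The plan is to split the family $\mathcal{A}$ into a "large diameter" piece (finite, by the null family hypothesis) and a "small diameter" piece (possibly infinite, but each member of controlled size). For the large pieces we will use compactness to push the neighborhood $V$ away from them entirely; for the small pieces we will use their small diameter to guarantee that anything meeting $V$ is automatically trapped inside $U$.

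Concretely, I would first choose $\varepsilon > 0$ small enough that $\ball{q}{2\varepsilon} \subseteq U$, which is possible since $U$ is open. By the null family hypothesis there are only finitely many members $A_1, \dots, A_n \in \mathcal{A}$ with $\diam(A_i) \ge \varepsilon$. Each $A_i$ is compact and, by assumption, does not contain $q$, so $\delta_i := d(q, A_i) > 0$. Setting $\delta := \min_i \delta_i$ (interpreted as $+\infty$ if $n=0$) and $r := \tfrac{1}{2}\min(\varepsilon, \delta)$, I would take $V := \ball{q}{r}$.

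To verify the conclusion, suppose $A \in \mathcal{A}$ meets $V$. If $\diam(A) \ge \varepsilon$, then $A = A_i$ for some $i$, but $V \subseteq \ball{q}{\delta_i}$ is disjoint from $A_i$, contradicting the fact that $A \cap V \ne \emptyset$. Hence $\diam(A) < \varepsilon$, and since $A$ contains a point within distance $r < \varepsilon$ of $q$, the triangle inequality gives $A \subseteq \ball{q}{r+\varepsilon} \subseteq \ball{q}{2\varepsilon} \subseteq U$, as required.

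There is no real obstacle here — the argument is essentially a pigeonhole between "finitely many obstructions" and "uniformly small perturbations." The only subtle point worth flagging is that one genuinely needs both parts of the hypothesis: the nullity guarantees finiteness of the set of large-diameter members, and the fact that $q$ lies in none of them lets us separate $q$ from each of these finitely many compacta with a positive margin $\delta_i$. Without the first, no uniform $r$ would work against the large members; without the second, the $\delta_i$ could vanish.
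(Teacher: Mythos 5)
Your argument is correct and is essentially the paper's own proof: both split $\mathcal{A}$ into the finitely many members of large diameter (kept away from $V$ using the positive distance from $q$ to each compact member not containing it) and the remaining small-diameter members (swallowed by $U$ via the triangle inequality), differing only in the choice of radii ($2\varepsilon$ versus $\varepsilon$ and threshold $\varepsilon$ versus $\varepsilon/2$).
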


\begin{proof}
Let $U$ be a neighborhood of $q$, and suppose $B(q,\epsilon) \subseteq U$.
Choose $\delta$ such that $0<\delta<\epsilon/2$ and such that $d(q,A) > \delta$ for each of the finitely many $A \in \mathcal{A}$ with diameter greater than $\epsilon/2$.
The result follows if we set $V = B(q,\delta)$.
\end{proof}

\begin{rem}
\label{rem:NullConeTopology}
The notion of a null family can be formulated in terms of the cone topology on $\boundary X$ as follows. Fix a basepoint $x_0 \in X$.  A collection $\mathcal{A}$ of subspaces of $\boundary X$ is a null family provided that there exists $D > 0$ such that for each $r<\infty$ only finitely many members of the collection $\mathcal{A}$ are not contained in any set of the form $U(\cdot,r,D)$.  It follows from Definition~\ref{defn:BoundaryMetric} that only finitely many members of $\mathcal{A}$ have diameter at least $1/r$ with respect to the metric $d_D$ on $\boundary_{x_0}X$.

A similar condition can be used to characterize null families in the cone topology on $\bar{X} = X \cup \boundary X$.  We leave the proof as an exercise for the reader.
\end{rem}

When the collection $\mathcal{A}$ of subsets of $M$ is disjoint, there is an associated decomposition of $M$ consisting of the sets in $\mathcal{A}$ together with all singletons $\{x\}$ such that $x \in M - \bigcup \mathcal{A}$.  By a slight abuse of notation, we let $M/\mathcal{A}$ denote the corresponding quotient in which each member of $\mathcal{A}$ is collapsed to a point.

Decompositions arising from null families play a central role in the proof of the main theorem.  The following result is stated as an exercise in \cite{Daverman86}.  The proof is nearly identical to the proof of Proposition~\ref{prop:NullAlmostUSC}.

\begin{prop}[\cite{Daverman86}, Prop.~I.2.3]
\label{prop:NullUSC}
Let $\mathcal{A}$ be a null family of disjoint compact subsets in a metric space $M$.  Then the associated decomposition of $M$ is upper semicontinuous.
\end{prop}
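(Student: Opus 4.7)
The plan is to verify the definition of upper semicontinuity directly, by producing for each $D \in \mathcal{D}$ and each open neighborhood $U$ of $D$ a smaller open neighborhood $V$ of $D$ whose $\mathcal{D}$--saturation lies in $U$. Compactness of each $D$ is immediate (singletons and members of $\mathcal{A}$ are compact), so the real content is the neighborhood condition. The argument splits naturally into two cases according to whether $D$ is a singleton disjoint from $\bigcup \mathcal{A}$ or whether $D = A_0$ is itself a member of $\mathcal{A}$.

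The first case is essentially Proposition~\ref{prop:NullAlmostUSC}. Given $D = \{x\}$ with $x \notin \bigcup \mathcal{A}$ and an open neighborhood $U$ of $x$, that proposition supplies an open $V \ni x$ with $V \subseteq U$ such that every $A \in \mathcal{A}$ meeting $V$ is contained in $U$. The $\mathcal{D}$--saturation of $V$ equals $V$ together with all such $A$'s, hence lies in $U$.

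The second case, where $D = A_0 \in \mathcal{A}$, takes a bit more work but follows the same flavor. Since $A_0$ is compact and $U$ is open with $A_0 \subseteq U$, a standard compactness argument gives $\delta > 0$ with $\nbd{A_0}{\delta} \subseteq U$. Because $\mathcal{A}$ is a null family, only finitely many members of $\mathcal{A}$ other than $A_0$ — call them $A_1,\ldots,A_n$ — have diameter at least $\delta/2$. Each $A_i$ is disjoint from the compact $A_0$, so $d(A_0,A_i) > 0$. Choose $\epsilon$ with $0 < \epsilon < \delta/2$ and $\epsilon < d(A_0,A_i)$ for every $i \le n$, and set $V = \nbd{A_0}{\epsilon}$. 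By construction $V$ avoids $A_1,\ldots,A_n$ entirely; any other $A \in \mathcal{A}$ that meets $V$ has $\diam A < \delta/2$, so if $p \in A \cap V$ then every $q \in A$ satisfies $d(q, A_0) \le d(q,p) + d(p, A_0) < \delta/2 + \epsilon < \delta$, placing $A$ inside $\nbd{A_0}{\delta} \subseteq U$. Thus the saturation of $V$ lies in $U$.

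The only real obstacle is bookkeeping in the second case: one must use compactness of $A_0$ to upgrade the open neighborhood $U$ to a uniform $\delta$--neighborhood, and then use the null-family hypothesis in two complementary ways, excluding the finitely many large members by positive distance and controlling all remaining members via a diameter/triangle-inequality estimate. Once both ingredients are combined, upper semicontinuity follows with no additional input.
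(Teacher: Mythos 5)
Your argument is correct and is exactly the adaptation the paper has in mind: it invokes Proposition~\ref{prop:NullAlmostUSC} for singleton members and repeats the same null-family estimate (finitely many large members pushed off by positive distance, small members controlled by a diameter plus triangle-inequality bound) when the decomposition element is itself a member of $\mathcal{A}$. The paper omits these details, saying only that the proof is nearly identical to that of Proposition~\ref{prop:NullAlmostUSC}, and your write-up supplies precisely that verification.
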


\begin{prop}
Let $X$ be a $\CAT(0)$ space that has isolated flats with respect to the family of flats $\mathcal{F}$.
Let $\mathcal{A}$ be the family of spheres $\set{\boundary F}{F \in \mathcal{F}}$.
Then $\mathcal{A}$ is a null family of disjoint compact subsets in $\boundary X$.
\end{prop}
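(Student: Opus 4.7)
The plan is to verify the three required properties — compactness, disjointness, and the null condition — in turn. Compactness of each $\boundary F$ is immediate, since $F$ is a Euclidean flat and $\boundary F$ a round sphere. For disjointness, suppose distinct flats $F, F' \in \mathcal{F}$ shared a boundary point $\xi$. Choosing asymptotic rays $c \subset F$ and $c' \subset F'$ representing $\xi$, the bounded Hausdorff distance between them would place a subray of each inside $\nbd{F}{K} \cap \nbd{F'}{K}$ for some $K$, contradicting Definition~\ref{def:IsolatedFlats}(\ref{item:ControlledInt}) since such a subray has infinite diameter.

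The main content is the null-family property, which I will verify using the cone-topology characterization from Remark~\ref{rem:NullConeTopology}. Fix $x_0 \in X$ and let $\kappa$ be the constant from Theorem~\ref{thm:facts}; I claim $D := 3\kappa$ works. For each $F \in \mathcal{F}$, let $p_F$ denote the closest point of $F$ to $x_0$ and $c_F$ the geodesic segment from $x_0$ to $p_F$, of length $b = d(x_0, F)$. The key estimate is that whenever $r \leq b$, one has $\boundary F \subseteq U(c_F, r, 3\kappa)$. Given $\xi \in \boundary F$, approximate $\xi$ by a sequence $y_n \in F$; by Theorem~\ref{thm:facts} each geodesic $[x_0, y_n]$ meets $\cball{p_F}{\kappa}$. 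A compactness argument then yields a point $c_\xi(t_0) \in \cball{p_F}{\kappa}$ on the limiting ray itself, with $|t_0 - b| \leq \kappa$ by the triangle inequality. Together these give $d \bigl( c_\xi(b), c_F(b) \bigr) \leq 2\kappa$, and convexity of the distance function between geodesics emanating from $x_0$ (a standard $\CAT(0)$ fact) propagates this bound to all $r \in [0, b]$, establishing the estimate.

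It remains to rule out infinitely many flats with $\boundary F$ failing to lie in some $U(\cdot, r, D)$. By the estimate above, any such flat must meet $B(x_0, r)$, so it suffices to show that only finitely many members of $\mathcal{F}$ meet this ball. Theorem~\ref{thm:PeriodicFlats} supplies finitely many $G$-orbits of flats, each $\Stab_G(F_0)$ acting cocompactly on $F_0$. Choosing a compact fundamental domain $K_0 \subseteq F_0$, any translate $g F_0$ meeting $B(x_0, r)$ admits, after replacing $g$ by $gh$ for some $h \in \Stab_G(F_0)$ (which leaves $gF_0$ unchanged), a compact set $g K_0$ meeting $B(x_0, r)$. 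Properness of the $G$-action on $X$ then bounds the number of such $g$, and summing over the finitely many orbits finishes the argument. I expect the main technical nuisance to be the passage-to-the-limit step that extracts a point of $c_\xi$ from the near-misses of the approximating segments to $p_F$; everything else reduces to $\CAT(0)$ convexity and a standard orbit-counting argument.
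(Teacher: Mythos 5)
Your proposal is correct and follows essentially the same route as the paper: both use Theorem~\ref{thm:facts} to show that the boundary sphere of any flat far from the basepoint lies in a single basic neighborhood $U(\cdot,r,\mathrm{const}\cdot\kappa)$, and then invoke the cone-topology characterization of null families from Remark~\ref{rem:NullConeTopology} together with the fact that only finitely many flats of $\mathcal{F}$ come near the basepoint. The only differences are cosmetic: you compare rays to the segment ending at the nearest point of $F$ (via a limiting argument and convexity of the distance between geodesics from a common origin, getting $3\kappa$ instead of the paper's $7\kappa$), and you prove the local finiteness of $\mathcal{F}$ by an orbit-counting argument, whereas the paper quotes it as known and argues with the Law of Cosines on pairs of segments meeting $F$.
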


\begin{proof}
Each $\boundary F \in \mathcal{A}$ is a sphere, which is compact.  The definition of isolated flats immediately implies that the members of $\mathcal{A}$ are pairwise disjoint.  Thus we only need to show they are a null family.

Choose a basepoint $x_0 \in X$.  Let $\kappa$ be the constant from Theorem~\ref{thm:facts}.  We will prove the following claim below: for any flat $F \in \mathcal{F}$ satisfying $d(x_0,F) \ge r + 3\kappa$ for some constant $r$, there exists a geodesic ray $c$ based at $x_0$ such that $\boundary F \subseteq U(c,r,7\kappa)$.

Since the collection of flats $\mathcal{F}$ is locally finite, there are only finitely many within a distance $r+3\kappa$ of $x_0$ for each $r<\infty$. Thus it will follow from Remark~\ref{rem:NullConeTopology} that $\mathcal{A}$ is a null family.

In order to prove the claim, let $q$ be the nearest point in $F$ to $x_0$.  Then $d(x_0,q) \ge r+3\kappa$.  Let $c,c'$ be geodesic segments from $x_0$ to $F$. By Theorem~\ref{thm:facts} the set $[x_0,q] \cup F$ is $\kappa$--quasiconvex.  Thus there exist $s,s'$ with $c(s),c'(s')$ both contained in $\bignbd{[x_0,q]}{\kappa}\cap \nbd{F}{\kappa}$.  It follows that $d\bigl(c(s),q\bigr)$ and $d\bigl(c'(s'),q\bigr)$ are each less than $3\kappa$.  Thus $d\bigl(c(s),c'(s')\bigr) < 6\kappa$.  By the Law of Cosines $d\bigl(c(r),c'(r)\bigr)<6\kappa$.  In particular $c' \in U(c,r,6\kappa)$.

Now suppose $c$ and $c'$ are geodesic rays asymptotic to $F$ (possibly not intersecting $F$).  Then each is a limit of geodesic segments that intersect $F$.  In this case, we conclude that $c' \in U(c,r,7\kappa)$.  Therefore $\boundary F \subseteq U(c,r,7\kappa)$ for any $c$ with $c(\infty) \in \boundary F$, establishing the claim.
\end{proof}

\begin{cor}
\label{cor:QuotientUSC}
Let $G$ act geometrically on a $\CAT(0)$ space $X$ with isolated flats.
Let $\P$ be the standard relatively hyperbolic structure on $G$.
Then the quotient map $\boundary X \to \boundary X / \mathcal{A} \to \boundary (G,\P)$ given by Theorem~\ref{thm:BoundaryQuotient} is upper semicontinuous. \qed
\end{cor}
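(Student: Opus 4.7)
The plan is to observe that the corollary is essentially immediate from the two results that precede it, together with a small bookkeeping step to match up the two decompositions of $\boundary X$ involved.

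First, I would note that the proposition established just above the corollary shows that the family $\mathcal{A} = \set{\boundary F}{F \in \mathcal{F}}$ is a null family of pairwise disjoint compact subsets of the metrizable space $\boundary X$. Invoking Proposition~\ref{prop:NullUSC} directly, the associated decomposition of $\boundary X$ (collapsing each $\boundary F$ to a point, and keeping every other point as a singleton) is upper semicontinuous; equivalently, the quotient map $\pi_1 \colon \boundary X \to \boundary X / \mathcal{A}$ is upper semicontinuous.

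Next, I would identify the quotient $\boundary X / \mathcal{A}$ with the Bowditch boundary. By Theorem~\ref{thm:HKMain}(\ref{item:RelHyp}), the peripheral structure $\P$ consists of the maximal virtually abelian subgroups of $G$ of rank at least two, and by Theorem~\ref{thm:PeriodicFlats} these are precisely the stabilizers $\Stab_G(F)$ of flats $F \in \mathcal{F}$. Since each such stabilizer acts cocompactly on its flat, its limit set in $\boundary X$ is exactly $\boundary F$. Therefore the decomposition associated to $\mathcal{A}$ coincides with the decomposition described in Theorem~\ref{thm:BoundaryQuotient}, which says that $\boundary X / \mathcal{A}$ is $G$--equivariantly homeomorphic to $\boundary(G,\P)$.

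Finally, I would conclude by composing: the quotient map $\boundary X \to \boundary(G,\P)$ factors as the upper semicontinuous map $\pi_1$ followed by a homeomorphism, and so it is itself upper semicontinuous. There is no real obstacle here; the entire content has been placed into the previous proposition and Theorem~\ref{thm:BoundaryQuotient}, and the only care needed is to verify that ``collapsing $\boundary F$ for $F \in \mathcal{F}$'' and ``collapsing the limit set of each $P \in \P$'' describe the same partition of $\boundary X$.
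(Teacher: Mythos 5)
Your proposal is correct and matches the paper's intent exactly: the corollary is stated with no written proof precisely because it follows immediately from the preceding proposition (the spheres $\boundary F$, $F\in\mathcal{F}$, form a null family of disjoint compacta), Proposition~\ref{prop:NullUSC}, and the identification of $\boundary X/\mathcal{A}$ with $\boundary(G,\P)$ via Theorem~\ref{thm:BoundaryQuotient}. Your bookkeeping step matching ``collapse each $\boundary F$'' with ``collapse the limit set of each $P\in\P$'' is exactly the right point to check, and your justification via Theorem~\ref{thm:PeriodicFlats} is sound.
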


\begin{cor}
\label{cor:RankOneLC}
Let $G$ be a one-ended group acting geometrically on a $\CAT(0)$ space $X$ with isolated flats.
Then $\boundary X$ is locally connected at any point $\xi$ not in the boundary of any flat.
\end{cor}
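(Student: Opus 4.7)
The plan is to combine the upper semicontinuous decomposition structure from Corollary~\ref{cor:QuotientUSC} with the pull-back principle for local connectedness in Proposition~\ref{prop:PullBackLC}, using Bowditch's theorem that the Bowditch boundary is locally connected.

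First, I would recall that the map $\boundary X \to \boundary(G,\P)$ realizes $\boundary(G,\P)$ as the quotient by the upper semicontinuous decomposition $\mathcal{D}$ of $\boundary X$ whose nontrivial members are the boundary spheres $\boundary F$, for $F \in \mathcal{F}$. Here $\P$ is the canonical relatively hyperbolic structure from Theorem~\ref{thm:HKMain}, consisting of the maximal virtually abelian subgroups of rank at least two. Each $\boundary F$ is a sphere of dimension at least one and hence connected, and every other member of $\mathcal{D}$ is a singleton, so every member of $\mathcal{D}$ is connected.

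Second, I would verify the hypotheses of Theorem~\ref{thm:Bowditch}(2) to conclude that $\boundary(G,\P)$ is locally connected. The group $G$ is one-ended by assumption, and each peripheral $P \in \P$ is virtually $\Z^k$ with $k \ge 2$: in particular $P$ is finitely presented, contains no infinite torsion subgroup, and is one-ended. Thus Bowditch's theorem applies and gives local connectedness of $\boundary(G,\P)$ at every point, in particular at the image of $\xi$.

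Finally, a rank one point $\xi \in \boundary X$ is by hypothesis not contained in $\boundary F$ for any $F \in \mathcal{F}$, so $\{\xi\}$ is a singleton member of $\mathcal{D}$. Proposition~\ref{prop:PullBackLC} then pulls back local connectedness at $\pi(\xi) \in \boundary(G,\P)$ to local connectedness of $\boundary X$ at $\xi$. The only subtlety — and the reason the argument genuinely breaks for points on $\boundary F$ — is that Proposition~\ref{prop:PullBackLC} requires the distinguished member of $\mathcal{D}$ to be a singleton, which is exactly the rank one hypothesis on $\xi$. All the nontrivial work is hidden in the machinery already set up: upper semicontinuity of the collapse (Corollary~\ref{cor:QuotientUSC}), local connectedness of the Bowditch boundary (Theorem~\ref{thm:Bowditch}(2)), and the decomposition-theoretic pull-back of local connectedness (Proposition~\ref{prop:PullBackLC}).
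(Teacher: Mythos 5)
Your argument is correct and is essentially the paper's own proof: both combine the upper semicontinuity of the collapsing map (Corollary~\ref{cor:QuotientUSC}), local connectedness of the Bowditch boundary via Theorem~\ref{thm:Bowditch}, and the pull-back of local connectedness at singleton members of the decomposition (Proposition~\ref{prop:PullBackLC}). Your explicit verification of Bowditch's hypotheses for the virtually abelian peripherals is a nice touch the paper leaves implicit, but the route is the same.
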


\begin{proof}
By Theorem~\ref{thm:Bowditch}, the Bowditch boundary $\boundary (G,\P)$  is locally connected at every point. Each member of the decomposition $\mathcal{A}$ is either a point or a sphere $S^k$ with $k>0$.  Thus all members of $\mathcal{A}$ are connected.
The result follows immediately from Proposition~\ref{prop:PullBackLC}.
\end{proof}

\section{Local connectivity on the boundary of a flat}
\label{sec:FlatLocalCon}

Recall our main goal is to establish local connectivity of the visual boundary of a  $\CAT(0)$ group with isolated flats in the setting where the maximal peripheral splitting is locally finite.

In this section we focus on the special case where the maximal peripheral splitting is trivial, in other words, we study atomic groups. 
The main goal of this section is to prove Theorem~\ref{thm:UnsplittableLC}, which states that the boundary of an atomic group is always locally connected.

In the previous section, we showed that the boundary of such a group is locally connected at any point not in the boundary of a flat subspace.  To reach our goal, it suffices to show that the boundary $\boundary X$ is weakly locally connected at points of $\boundary F$, where $F$ is a flat subspace.
Recall that a space is \emph{weakly locally connected} at a point $\xi$ if $\xi$ has a local base of (not necessarily open) connected neighborhoods.
A space is locally connected if it is weakly locally connected at every point.

In order to understand the topology of $\boundary X$ near a point of $\boundary F$, we partition $\boundary X$ into $\boundary F$ and its complement $\Upsilon = \boundary X - \boundary F$.
If $P$ is the stabilizer of $F$, then we will see that $P$ acts properly and cocompactly on $F$, on $\Upsilon$, and also on $\boundary(G,\P) - \{\rho\}$, where $\rho$ is the parabolic point corresponding to $\boundary F$.  Our main strategy is to exploit similarities between these three spaces.
Many of these similarities do not not depend on the extra hypothesis that $G$ does not peripherally split.
After developing some features of this similarity, we will add the extra ``atomic'' hypothesis, which implies that both $F$ and $\Upsilon$ are $0$--connected spaces.
Since they share proper and cocompact group actions by the same group, we are able to transfer $0$--connectedness properties between them.
In particular, the local connectedness of $\boundary X = \Upsilon \cup \boundary F$ at a point $\xi \in \boundary F$ will follow from the local connectedness of $\bar{F} = F \cup \boundary F$ at $\xi$.

The similarity between $F$ and $\Upsilon$ was first introduced and extensively studied by Haulmark \cite{HaulmarkCAT0} in the general situation of groups acting on $\CAT(0)$ spaces with isolated flats (without the ``atomic'' hypothesis).
This similarity depends heavily on the following two lemmas that play a significant role in Haulmark's work.

\begin{lem}[\cite{HaulmarkCAT0}]
\label{lem:CompactTransfer}
Let $G$ be a group acting geometrically on a $\CAT(0)$ space $X$ with isolated flats with respect to the family $\mathcal{F}$.  Let $F \in \mathcal{F}$ be a flat with stabilizer $P$, and let $\Upsilon = \boundary X - \boundary F$.
For each compact set $K \subset \Upsilon$ there exists a compact set $C \subset F$ such that for each $\eta \in K$ there is a geodesic ray $c'$ with $c'(0) \in C$ and $c'(\infty) =\eta$ such that $c'$ meets $F$ orthogonally.
Furthermore the compact set $C$ can be chosen $P$--equivariantly in the sense that if $p \in P$ then $pC$ is the compact set of $F$ corresponding to $pK$.
\end{lem}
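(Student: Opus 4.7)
The plan is to set $C$ equal to the closure in $F$ of
\[
\bigl\{ p \in F \bigm| \text{some geodesic ray from } p \text{ meets } F \text{ orthogonally at } p \text{ and has endpoint in } K \bigr\}.
\]
With this definition, the existence assertion of the lemma is tautological once we verify that every $\eta \in K$ admits at least one such orthogonal ray, and $P$--equivariance is automatic because each $g \in P$ acts on $X$ by an isometry preserving $F$ setwise and therefore carries orthogonal rays to orthogonal rays, identifying $g \cdot C$ with the set associated to $g \cdot K$.

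To produce the orthogonal ray for a given $\eta \in \Upsilon$, fix a basepoint $x_0 \in F$, let $c_\eta\colon [0,\infty) \to X$ be the geodesic ray from $x_0$ to $\eta$, and let $q_t \in F$ be the nearest-point projection of $c_\eta(t)$. Each segment $[c_\eta(t), q_t]$ meets $F$ orthogonally at $q_t$ by convexity of $F$. Since $\eta \notin \boundary F$, the convex function $t \mapsto d\bigl( c_\eta(t), F \bigr)$ tends to infinity---otherwise it stays bounded, so $c_\eta$ runs in a tubular neighborhood of $F$, and projection to $F$ exhibits $\eta$ as a point of $\boundary F$. Granted that $\{q_t\}$ is bounded in $F$, any accumulation point $p$ of $\{q_t\}$ yields---as the Arzel\`a--Ascoli limit of the orthogonal segments $[q_{t_n}, c_\eta(t_n)]$ whose far endpoints converge to $(p, \eta)$---a geodesic ray from $p$ that meets $F$ orthogonally at $p$ and terminates at $\eta$; hence $p \in C$.

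The main obstacle is showing that $C$ is bounded; closedness in the proper space $F$ then gives compactness. I argue by contradiction using Theorem~\ref{thm:facts}. Suppose $\eta_n \in K$ and $p_n \in C$ satisfy $d(x_0, p_n) \to \infty$, and let $c_n$ be the corresponding orthogonal ray from $p_n$ to $\eta_n$. For each $n$ and each $t > 0$, applying Theorem~\ref{thm:facts} to $x = c_n(t)$ with shortest path $[c_n(t), p_n]$ shows that the geodesic from $c_n(t)$ to $x_0 \in F$ passes within $\kappa$ of $p_n$; letting $t \to \infty$, the ray $c_{\eta_n}$ from $x_0$ to $\eta_n$ itself passes within $\kappa$ of $p_n$ at some time $s_n$, and $s_n \ge d(x_0, p_n) - \kappa \to \infty$. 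Because $t \mapsto d\bigl( c_{\eta_n}(t), F \bigr)$ is convex with value $0$ at $t = 0$ and less than $\kappa$ at $t = s_n$, convexity forces it to remain below $\kappa$ on all of $[0, s_n]$. Extracting a subsequence with $\eta_n \to \eta \in K$, the rays $c_{\eta_n}$ converge to $c_\eta$ uniformly on compact intervals, so $c_\eta$ stays in $\nbd{F}{\kappa}$ on every bounded subinterval of $[0,\infty)$ and hence forever. This forces $\eta \in \boundary F$, contradicting $\eta \in \Upsilon$. The interplay between the quasiconvexity estimate of Theorem~\ref{thm:facts} \textup{(}which is precisely where the isolated flats hypothesis enters\textup{)} and the convexity of distance-to-$F$ along a geodesic is the technical heart of the argument; the special case $\eta_n = \eta$ fixed also supplies the boundedness of $\{q_t\}$ needed in the existence step above.
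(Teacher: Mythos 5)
The paper does not actually prove this lemma -- it is quoted from Haulmark \cite{HaulmarkCAT0} -- so there is no internal proof to compare against; judged on its own, your argument is correct and is essentially a self-contained reconstruction of the intended one: the only isolated-flats input is the quasiconvexity statement of Theorem~\ref{thm:facts}, played off against convexity of $t \mapsto d\bigl(c(t),F\bigr)$, which is exactly the mechanism this paper credits to Haulmark (compare Lemma~\ref{lem:SmallTransfer}). Three small points to tidy up. First, as literally stated the lemma cannot hold with ``for each $\eta \in \Upsilon$'' (a $P$--equivariance/cocompactness argument shows the feet of orthogonal rays over all of $\Upsilon$ are unbounded in $F$); the intended quantifier is $\eta \in K$, and that is the version you prove -- your existence step for arbitrary $\eta \in \Upsilon$ is precisely the paper's subsequent remark that every point of $\Upsilon$ is the endpoint of some orthogonal ray, with no control on the foot. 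Second, in the contradiction argument you should take $p_n$ from the defining set rather than from its closure $C$ (or observe, via Arzel\`a--Ascoli and compactness of $K$, that the set of feet is already closed); as written a point of $C$ need not itself carry an orthogonal ray with endpoint in $K$, though boundedness of the defining set of course bounds its closure. Third, the boundedness of $\{q_t\}$ in the existence step is not literally the ``special case $\eta_n=\eta$'' of the final paragraph, since there you work with orthogonal rays while here you have perpendicular segments $[c_\eta(t),q_t]$; but Theorem~\ref{thm:facts} applied to those segments (the geodesic from $c_\eta(t)$ back to $x_0$ passes within $\kappa$ of $q_t$) together with the same convexity argument gives it directly, with no limiting step needed. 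With these adjustments the proof is complete.
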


We occasionally apply the previous lemma in the following special case: each point of $\Upsilon$ is the endpoint of a geodesic ray meeting $F$ orthogonally.

The following corollary of Theorem~\ref{thm:facts} was first observed by Haulmark.

\begin{lem}[\cite{HaulmarkCAT0}]
\label{lem:SmallTransfer}
Let $\kappa$ be the constant given by Theorem~\ref{thm:facts}.
Let $\Upsilon = \boundary X - \boundary F$, and suppose $c'$ is a geodesic ray meeting $F$ orthogonally.
Suppose $c$ is a geodesic ray contained in $F$.
If $c'(0) \in U(c,r,D)$ for some constants $r$ and $D$ then $c'(\infty) \in U(c,r,D + \kappa)$.
Conversely if $c'(\infty) \in U(c,r,D)$ then $c'(0) \in U(c,r,D + \kappa)$.
\end{lem}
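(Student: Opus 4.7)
The plan is to exploit Theorem~\ref{thm:facts} to show that the geodesic rays from $q := c(0)$ to the two points $c'(0)$ and $c'(\infty)$ track each other closely, and then to use the standard CAT(0) convexity of distance functions based at $q$ to convert this tracking at large parameter into the neighborhood conditions defining $U(c,r,D)$.

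I would begin by observing that since $c'$ meets $F$ orthogonally at $p := c'(0) \in F$, the segment $c'|_{[0,t]}$ is the shortest path from $c'(t)$ to $F$ for every $t > 0$. Thus Theorem~\ref{thm:facts} applies to the set $c'|_{[0,t]} \cup F$: the geodesic from $c'(t)$ to $q \in F$ must meet $B(p,\kappa)$. Since $X$ is proper, these geodesic segments converge as $t \to \infty$ to the geodesic ray $\gamma$ from $q$ to $c'(\infty)$, so $\gamma$ itself meets $B(p,\kappa)$. Hence there exists $s_0$ with $d(\gamma(s_0), p) \leq \kappa$, and by the triangle inequality $|s_0 - d(q,p)| \leq \kappa$.

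Let $\eta$ denote the geodesic segment in $F$ from $q$ to $p$; when $d(q,p) > r$ we have the identification $\eta(r) = \pi_r(p)$. The two rays $\gamma$ and $\eta$ emanate from the common basepoint $q$, so by convexity of the CAT(0) distance function the map $t \mapsto d(\gamma(t), \eta(t))$ is convex and vanishes at $t = 0$. The bound $d(\gamma(s_0), p) \leq \kappa$ combined with $|s_0 - d(q,p)| \leq \kappa$ gives a control on this convex function near $t = d(q,p)$, and ratioing yields a bound of the form $d(\gamma(r), \eta(r)) \leq \kappa$ for $r \leq d(q,p)$. Both assertions of the lemma then follow from the triangle inequality
\[
   d(\gamma(r), c(r)) \leq d(\gamma(r), \eta(r)) + d(\eta(r), c(r)),
\]
read with the hypothesis feeding either the first or the second summand. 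In the converse direction one must also check that $d(p,q) > r$ so that $\pi_r(p)$ equals $\eta(r)$; this is obtained using that $c'(\infty) \notin \boundary F$, so for $r$ large the geodesic $\gamma$ strays from $F$ only after distance $d(q,p)$, forcing $d(p,q) > r$.

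The main obstacle I anticipate is keeping the constant exactly at $\kappa$ rather than $2\kappa$: a direct combination of Theorem~\ref{thm:facts} and CAT(0) convexity charges $\kappa$ for the gap between $\gamma(s_0)$ and $p$ and a further $\kappa$ for the gap $|s_0 - d(q,p)|$. To obtain the stated constant one likely needs either to absorb the factor into the choice of $\kappa$ at the outset (using that Theorem~\ref{thm:facts} gives quasiconvexity with \emph{some} constant, which may be renamed), or to sharpen the convexity argument using the orthogonality of $c'$ to $F$ at $p$ to improve the triangle comparison along the segment $\eta$.
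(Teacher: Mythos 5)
The paper itself gives no proof of Lemma~\ref{lem:SmallTransfer}: it is quoted from Haulmark and described only as a corollary of Theorem~\ref{thm:facts}, so the comparison is with that intended derivation, and your outline follows exactly that route. Applying Theorem~\ref{thm:facts} to the segments $c'|_{[0,t]}$ (which are shortest paths to $F$ because $c'$ meets $F$ orthogonally), passing to a limit to see that the ray $\gamma$ from $q=c(0)$ to $c'(\infty)$ enters $\bar{B}(p,\kappa)$ where $p=c'(0)$, comparing $\gamma$ with $\eta=[q,p]$ at parameter $r$, and finishing with the triangle inequality against $c(r)$ is the right skeleton, and it is essentially complete for the forward direction.

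Two concrete points, the second of which is a real gap. First, the step you leave vague (``ratioing yields $d(\gamma(r),\eta(r))\le\kappa$'') does not come out at $\kappa$: from $d\bigl(\gamma(s_0),p\bigr)\le\kappa$ and $\abs{s_0-d(q,p)}\le\kappa$, convexity --- or, more cleanly, the $1$--Lipschitz projection $\pi_r$ onto $\bar{B}(q,r)$ applied to the point of $[q,c'(t)]$ lying in $B(p,\kappa)$ --- gives $d(\gamma(r),\eta(r))\le 2\kappa$ in general, and $\le\kappa$ only when $d(q,p)\ge r+\kappa$, so that this nearby point lies outside $B(q,r)$ and has the same $\pi_r$--image as $c'(t)$. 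Orthogonality of $c'$ to $F$ does not obviously rescue the constant in the regime $r<d(q,p)<r+\kappa$; the honest fix is the first one you propose: prove the statement with $D+2\kappa$ (equivalently, rename the quasiconvexity constant), which is all that any application in this paper requires. Second, in the converse direction your justification that $d\bigl(c'(0),c(0)\bigr)>r$ --- which is needed for $c'(0)$ to lie in $U(c,r,D+\kappa)$ at all --- is not valid: $r$ is given by the hypothesis, not at your disposal, and the condition can genuinely fail. For instance, if $c'(0)=c(0)$ then $d\bigl(c'(r),c(r)\bigr)\le 2r$, so $c'(\infty)\in U(c,r,D)$ whenever $D>2r$, while $c'(0)$ lies in no set of the form $U(c,r,D')$. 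So the converse as literally stated must either be read with $D$ small relative to $r$, or with $d\bigl(c'(0),c(0)\bigr)>r$ supplied by the context (as it is wherever the paper invokes the lemma); you should add that hypothesis explicitly rather than attempt to derive it.
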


In any $\CAT(0)$ space $X$ with a geometric group action, the family of translates of a compact fundamental domain is a null family in the following sense.

\begin{prop}[\cite{Bestvina96}]
\label{prop:YNullTranslates}
Let $H$ be any group acting geometrically on a $\CAT(0)$ space $Y$.
Let $C \subset Y$ be any compact set.
Then the collection of $H$--translates of $C$ is a null family in the compact space $\bar{Y}$.
\end{prop}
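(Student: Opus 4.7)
The plan is to argue by contradiction, using the CAT(0) geometry of $Y$ together with the properness of the $H$-action. Fix a basepoint $y_0 \in Y$ and a metric $\rho$ on the compact space $\bar Y$ compatible with the cone topology. Suppose the family $\{hC : h \in H\}$ is not null, so there exist $\epsilon > 0$ and an infinite sequence of distinct $h_n \in H$ with $\diam_\rho(h_n C) > \epsilon$ for all $n$.

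I would first dichotomize on the distances $d(y_0, h_n C)$. If a subsequence of these distances stayed bounded by some $R$, then the corresponding $h_n C$ would all meet the compact set $\bar B(y_0, R)$, and properness of the action applied to the compact pair $\bigl(C, \bar B(y_0, R)\bigr)$ would leave only finitely many admissible $h_n$, contradicting infinitude. Hence after passing to a subsequence I may assume $d(y_0, h_n C) \to \infty$.

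The main step is to derive $\diam_\rho(h_n C) \to 0$, contradicting the lower bound $\epsilon$. For any $x, y \in h_n C$ write $a = d(y_0, x)$, $b = d(y_0, y)$, $c = d(x, y)$; since $c \le \diam(C)$ and $a, b \ge d(y_0, h_n C)$, the CAT(0) comparison applied to the triangle $(y_0, x, y)$ yields
\[
1 - \cos \angle_{y_0}(x, y) \;\le\; 1 - \cos \bar\angle_{y_0}(x, y) \;=\; \frac{c^2 - (a-b)^2}{2ab} \;\le\; \frac{\diam(C)^2}{2\,d(y_0, h_n C)^2},
\]
so the Alexandrov angles $\angle_{y_0}(x, y)$ tend uniformly to zero over pairs in $h_n C$. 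Extracting a further subsequence with Hausdorff limit $h_n C \to L \subseteq \bar Y$, the divergence $d(y_0, h_n C) \to \infty$ forces $L \subseteq \boundary Y$, and the uniform angle bound at $y_0$ collapses $L$ to a single boundary point. Thus $\diam_\rho(h_n C) \to 0$, which is the required contradiction.

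The one point requiring care, and which I view as the main obstacle, is the passage from ``angular diameter at $y_0$ is small'' to ``$\rho$-diameter in $\bar Y$ is small,'' since the topology on $\bar Y$ is defined by the sets $U(c, r, D)$ rather than directly by an angle pseudometric at $y_0$. This is resolved by noting that for any basic cone-topology neighborhood $U(c, r, D)$, the CAT(0) comparison bounds $d\bigl(\pi_r(x), \pi_r(y)\bigr)$ linearly in $r$ and in $\angle_{y_0}(x, y)$; hence once $h_n C$ lies outside $\bar B(y_0, r)$ and has angular diameter less than a threshold depending on $D$ and $r$, it is contained in $U(c, r, D)$ for a ray $c$ through any fixed point of $h_n C$. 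The same estimate gives the lower semicontinuity of $\angle_{y_0}$ on $\bar Y \setminus \{y_0\}$ used to identify $L$ as a single point.
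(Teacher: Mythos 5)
Your proof is correct, but there is no in-paper argument to compare it against: the paper states this proposition with a citation to \cite{Bestvina96} and gives no proof, since the nullity of translates of a compact set in $\bar{Y}$ is verified there as part of Bestvina's boundary package for $\CAT(0)$ groups. Measured against how this paper handles its \emph{other} nullness claims (the family of spheres $\boundary F$, and the branches $\Psi(T_e)$ via Lemma~\ref{lem:smallbranch} and Remark~\ref{rem:NullConeTopology}), your route is the same in spirit---properness of the action plus a comparison estimate for far-away translates---but packaged more heavily: the paper-style argument would skip the Hausdorff limit and the semicontinuity discussion and instead show directly that each translate $hC$ with $d(y_0,hC)$ large lies in a single basic set $U(c,r,D)$. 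Your displayed inequality already does all the work for this: it gives $2\sin\bigl(\bar\angle_{y_0}(x,y)/2\bigr)\le \diam(C)/d(y_0,hC)$ for $x,y\in hC$, hence $d\bigl(\pi_r(x),\pi_r(y)\bigr)\le 2r\sin\bigl(\bar\angle_{y_0}(x,y)/2\bigr)\le r\,\diam(C)/d(y_0,hC)$, so $hC\subseteq U(c,r,D)$ for the geodesic $c$ from $y_0$ through any chosen point of $hC$ as soon as $d(y_0,hC)>\max\bigl(r,\; r\,\diam(C)/D\bigr)$; properness (cocompactness is never used) leaves only finitely many $h$ below this threshold, and Remark~\ref{rem:NullConeTopology} finishes. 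One small correction to make in your last paragraph: the linear bound on $d\bigl(\pi_r(x),\pi_r(y)\bigr)$ comes from the $\CAT(0)$ inequality applied to the triangle $(y_0,x,y)$ and therefore involves the comparison angle $\bar\angle_{y_0}(x,y)$, not the Alexandrov angle $\angle_{y_0}(x,y)$ (monotonicity of comparison angles gives only lower bounds on chord distances in terms of the Alexandrov angle). Since your display controls $1-\cos\bar\angle_{y_0}(x,y)$ directly, this is a cosmetic fix, and the same comparison-angle estimate also justifies the ``single limit point'' step, because the projections $\pi_r$ are continuous on $\bar{Y}$ and the estimate forces the limiting projections to coincide at every radius $r$.
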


We will use the previous proposition in the case when $Y$ is a flat subspace $F$ of a $\CAT(0)$ space with isolated flats, and $H$ is its stabilizer $P$.

The similarity between $F$ and $\Upsilon$ allows us to transfer this version of the null condition to the action of $P$ on $\Upsilon$ as follows:

\begin{prop}
\label{prop:UpsilonNull}
Let $G$ act geometrically on a space $X$ with isolated flats. Choose $F \in \mathcal{F}$ with stabilizer $P$, and let $\Upsilon = \boundary X - \boundary F$.
If $K \subset \Upsilon$ is compact, then the collection of $P$--translates of $K$ is a null family in $\bar{\Upsilon} = \boundary X$.
\end{prop}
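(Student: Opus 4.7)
The plan is to transfer the null family property from the flat $F$ to $\Upsilon$ by exploiting the strong parallel between them, supplied by Lemmas~\ref{lem:CompactTransfer} and~\ref{lem:SmallTransfer}. Fix a basepoint $x_0 \in F$. By Theorem~\ref{thm:PeriodicFlats}, $P = \Stab_G(F)$ acts cocompactly on $F$, and let $\kappa$ denote the constant from Theorem~\ref{thm:facts}.

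First I would apply Lemma~\ref{lem:CompactTransfer} to the compact set $K$ to obtain a compact set $C \subset F$ associated $P$-equivariantly with $K$: for every $p \in P$, each $\eta \in pK$ is the endpoint of a geodesic ray $c'_\eta$ meeting $F$ orthogonally with $c'_\eta(0) \in pC$. Then Proposition~\ref{prop:YNullTranslates} applied to the cocompact action of $P$ on the $\CAT(0)$ space $F$ yields that $\{pC\}_{p \in P}$ is a null family in $\bar F$.

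Next I would invoke the cone-topology characterization of null families (Remark~\ref{rem:NullConeTopology}, in the analogous version for $\bar F$) to extract a constant $D_0 > 0$ with the following property: for each $r < \infty$, all but finitely many translates $pC$ are contained in some basic neighborhood $U(c_p, r, D_0)$ with $c_p$ a ray \emph{in $F$} based at $x_0$. For each such $p$ and each $\eta \in pK$, the associated orthogonal ray $c'_\eta$ satisfies $c'_\eta(0) \in pC \subset U(c_p, r, D_0)$, so Lemma~\ref{lem:SmallTransfer} gives $\eta = c'_\eta(\infty) \in U(c_p, r, D_0 + \kappa)$. Thus $pK \subset U(c_p, r, D_0 + \kappa)$ for all but finitely many $p \in P$, and Remark~\ref{rem:NullConeTopology} (now applied with $D = D_0 + \kappa$) identifies $\{pK\}$ as a null family in $\bar\Upsilon = \boundary X$.

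The main obstacle is the middle step, where the abstract null property of $\{pC\}$ in $\bar F$ has to be upgraded to a uniform $U$-set description using rays \emph{contained in $F$}, since Lemma~\ref{lem:SmallTransfer} requires this. The argument takes $c_p$ through a representative point $q_p \in pC$ (possible because $F$ is a flat, so any segment extends to a ray), and then the Euclidean geometry of $F$ together with the fact that only finitely many $pC$ can lie near $x_0$ (by properness of the $P$-action) yields the desired uniform $D_0$.
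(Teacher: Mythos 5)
Your proposal is correct and follows essentially the same route as the paper: pull $K$ back to a compact set $C\subset F$ via Lemma~\ref{lem:CompactTransfer}, apply Proposition~\ref{prop:YNullTranslates} to the cocompact $P$--action on $F$ to see that all but finitely many translates $pC$ lie in sets $U(\cdot,r,D)$, and then use Lemma~\ref{lem:SmallTransfer} to conclude $pK\subset U(\cdot,r,D+\kappa)$, citing Remark~\ref{rem:NullConeTopology} at both ends. The ``obstacle'' you flag at the end is handled the same way implicitly in the paper: since the null family $\{pC\}$ lives in $\bar F$, the basic neighborhoods from Remark~\ref{rem:NullConeTopology} are already defined by rays in $F$ based at $x_0$, which is exactly the form Lemma~\ref{lem:SmallTransfer} requires.
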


\begin{proof}
Choose a compact set $K \subset \Upsilon$.
Our strategy is to exploit the similarity between $\Upsilon$ and $F$ as follows:
use Lemma~\ref{lem:CompactTransfer} to pull $K$ back to a compact set $C$ in $F$, use Proposition~\ref{prop:YNullTranslates} to see that almost all translates of $C$ are ``small'' in $\bar{F}$, and then apply Lemma~\ref{lem:SmallTransfer} to conclude that the corresponding translates of $K$ are similarly small in $\bar{\Upsilon} = \boundary X$.

For our given compact set $K \subset \Upsilon$, let $C$ be the corresponding compact set of $F$ given by Lemma~\ref{lem:CompactTransfer}.  
Fix a positive constant $D$, and let $\kappa$ be the constant from Theorem~\ref{thm:facts}.
By Remark~\ref{rem:NullConeTopology}, it suffices to show that for each $r < \infty$ only finitely many $P$--translates of $K$ are not contained in any set of the form $U(\cdot,r,D+\kappa)$.
According to Proposition~\ref{prop:YNullTranslates}, the $P$--translates of $C$ are a null family. Thus only finitely many $P$--translates of $C$ are not contained in a set of the form $U(\cdot,r,D)$.
For any $p \in P$, if $pC$ lies in a set of the form $U(\cdot,r,D)$ then the corresponding set $pK$ of $\Upsilon$ lies in a set of the form $U(\cdot,r,D+\kappa)$ by Lemma~\ref{lem:SmallTransfer}. So $\{pK\}$ is a null family in $\bar{\Upsilon}$.
\end{proof}

Since $P$ acts cocompactly on $F$, Haulmark exploited the similarity between $F$ and $\Upsilon$ to show that $P$ also acts cocompactly on $\Upsilon$ \cite{HaulmarkCAT0}.

For the rest of this section, we focus on the special setting where $G$ is atomic.
In this special situation, we can improve the conclusion of Haulmark's cocompactness theorem to get a compact, connected fundamental domain for the action of $P$ on $\Upsilon$.

\begin{prop}
\label{prop:ConnectedFD}
Let $G$ be a one-ended group acting geometrically on a $\CAT(0)$ space $X$ that has isolated flats with respect to the family of flats $\mathcal{F}$.
Suppose $G$ is atomic.
For each flat $F \in \mathcal{F}$ with stabilizer $P$, there is a compact connected set $K$ in $\Upsilon = \boundary X - \boundary F$ whose $P$--translates cover $\Upsilon$.

Furthermore if $\mathcal{T}$ is any finite generating set for the group $P$, we can choose $K$ large enough that $K$ intersects $tK$ for all $t \in \mathcal{T}$.
\end{prop}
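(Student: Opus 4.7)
The plan is to obtain $K$ by pulling back a suitable compact connected set from the Bowditch boundary $\boundary(G,\P)$ along the upper semicontinuous quotient $\pi \colon \boundary X \to \boundary(G,\P)$ of Theorem~\ref{thm:BoundaryQuotient} and Corollary~\ref{cor:QuotientUSC}. Write $\rho = \pi(\boundary F)$ for the parabolic point stabilized by $P$ and set $Y = \boundary(G,\P) - \{\rho\}$.

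First I would record the topology of $Y$. By Theorem~\ref{thm:HKMain} the peripheral family $\P$ consists of higher-rank virtually abelian groups, which are finitely presented, virtually torsion-free, and one-ended, so Theorem~\ref{thm:Bowditch} applies. Part~(2) gives that $\boundary(G,\P)$ is locally connected, and the non-splitting hypothesis combined with Theorem~\ref{thm:Bowditch}(\ref{item:BowditchCutPoint}) implies that $\boundary(G,\P)$ has no global cut point. Removing the closed point $\rho$ from the compact Hausdorff space $\boundary(G,\P)$ thus yields an open subspace $Y$ that is locally compact, locally connected, and connected; since $\boundary(G,\P)$ is a Peano continuum it is locally path connected, so the open set $Y$ is locally path connected, and hence path connected. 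Finally, because $\rho$ is a bounded parabolic point, $P$ acts properly and cocompactly on $Y$.

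Second I would construct $\bar{K} \subset Y$. Start with a compact set $C \subset Y$ satisfying $PC = Y$; using local compactness and local connectedness, cover $C$ by finitely many connected relatively compact open sets and chain their closures with paths in the path-connected space $Y$, producing a compact connected set $\bar{K}_0 \supseteq C$. For each $t \in \mathcal{T}$ choose a path $\gamma_t$ in $Y$ joining a point of $\bar{K}_0$ to a point of $t \bar{K}_0$, and set
\[
   \bar{K} \;=\; \bar{K}_0 \;\cup\; \bigcup_{t \in \mathcal{T}} \bigl( \gamma_t \cup t\bar{K}_0 \bigr).
\]
This is a finite union of compact connected pieces each meeting $\bar{K}_0$, so $\bar{K}$ is compact and connected; moreover $P\bar{K} \supseteq PC = Y$, and $t\bar{K}_0 \subseteq \bar{K} \cap t\bar{K}$ is nonempty for every $t \in \mathcal{T}$.

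Finally, set $K = \pi^{-1}(\bar{K})$. Since $\bar{K} \subset Y$ we have $K \subset \Upsilon$. The quotient $\pi$ is proper by Proposition~\ref{prop:Monotone}(1), so $K$ is compact. Each fiber of $\pi$ is either a singleton or a boundary sphere $\boundary F'$, and hence connected, so Proposition~\ref{prop:Monotone}(2) gives that $K$ is connected; $P$-equivariance of $\pi$ yields $PK = \pi^{-1}(P\bar{K}) = \pi^{-1}(Y) = \Upsilon$, and $K \cap tK = \pi^{-1}(\bar{K} \cap t\bar{K})$ is nonempty for each $t \in \mathcal{T}$. The principal obstacle the argument has to sidestep is that $\Upsilon$ itself need not be locally connected, which blocks a direct construction of a compact connected fundamental domain inside $\Upsilon$; collapsing the flat-boundary spheres removes precisely those obstructions and reduces the problem to a routine one in the Peano continuum $\boundary(G,\P)$, with the non-splitting hypothesis entering only to guarantee via Bowditch's no-cut-point theorem that $Y$ is still connected.
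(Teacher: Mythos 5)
Your proposal is correct and follows essentially the same route as the paper: find a compact fundamental domain for the $P$--action on $\boundary(G,\P)-\{\rho\}$ (using bounded parabolicity), enlarge it to a compact connected set meeting its $\mathcal{T}$--translates via local connectedness, path connectedness, and the no-cut-point consequence of the no-peripheral-splitting hypothesis, and then pull back through the upper semicontinuous, monotone quotient $\pi$ using Proposition~\ref{prop:Monotone}. The only difference is cosmetic: the paper packages the connectification step as Lemma~\ref{lem:CompactConnectedRelative} and arranges the translate intersections before connectifying, while you connectify first and then attach paths to the translates $t\bar{K}_0$; both yield the same conclusion.
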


The previous results of this section are proved using the similarity between $\Upsilon$ and $F$.
However for the proof of Proposition~\ref{prop:ConnectedFD}, we  use the other similarity between $\Upsilon$ and $\boundary(G,\P) - \{\rho\}$, where $\rho$ is the parabolic point corresponding to $\boundary F$.
The proof of the proposition relies on the fact that each parabolic point $\rho$ of the Bowditch boundary is bounded parabolic; i.e., there is a compact fundamental domain for the action of its stabilizer on $\boundary(G,\P) - \{\rho\}$. (See Definition~\ref{def:RelHyp}.)

We also need the following lemma, which allows us to increase any such compact fundamental domain to a connected one, provided that $G$ is atomic.

\begin{lem}
\label{lem:CompactConnectedRelative}
Let $(G,\P)$ be relatively hyperbolic.  Suppose $G$ is atomic and each $P \in \P$ is finitely presented and does not contain an infinite torsion subgroup.  Let $\rho \in \boundary(G,\P)$ be a parabolic point with stabilizer $P$.  Let $C_0$ be any compact fundamental domain for the action of $P$ on $\boundary(G,\P) - \{\rho\}$.
Then $C_0$ is contained in a compact connected fundamental domain $C$.
\end{lem}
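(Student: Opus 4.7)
The plan is to leverage Theorem~\ref{thm:Bowditch} to get strong topological control on $\boundary(G,\P)$ and then to assemble $C$ by a finite construction. Under the hypotheses (one-ended $G$ with trivial maximal peripheral splitting, in the setting of Theorem~\ref{thm:Bowditch}), the Bowditch boundary $\boundary(G,\P)$ is a compact, connected, locally connected, metrizable space with no global cut points. Setting $M = \boundary(G,\P) - \{\rho\}$, the absence of a cut point at $\rho$ makes $M$ connected, while $M$ inherits local compactness from the ambient compact Hausdorff space. Since $\boundary(G,\P)$ is a Peano continuum, it is in fact locally path connected (a classical consequence of the Hahn--Mazurkiewicz theorem), and this property passes to the open subspace $M$. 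A connected, locally path connected space is path connected, so $M$ is path connected.

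With these topological facts in hand, I build $C$ by a finite assembly. Fix a basepoint $x_0 \in C_0$. Using local path connectedness, cover the compact set $C_0$ by finitely many path-connected open subsets $V_1, \dots, V_n$ of $M$ whose closures $\overline{V_i}$ are compact in $M$; after discarding any $V_i$ disjoint from $C_0$, pick $y_i \in V_i \cap C_0$ and, using path connectedness of $M$, choose a path $\gamma_i$ in $M$ from $x_0$ to $y_i$. Define
\[
    C \;=\; C_0 \,\cup\, \bigcup_{i=1}^n \overline{V_i} \,\cup\, \bigcup_{i=1}^n \Image(\gamma_i).
\]
This $C$ is a finite union of compact subsets of $M$, hence compact and disjoint from $\rho$. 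Each $\overline{V_i}$ is connected (as the closure of a path-connected set) and meets $\Image(\gamma_i)$ at the point $y_i$; the path images $\Image(\gamma_i)$ all share the common point $x_0$; and $C_0 \subseteq \bigcup_i \overline{V_i}$, so $C$ is connected. Because $C \supseteq C_0$ and $C \subseteq M$, we have $P \cdot C = M$, so $C$ is a compact connected fundamental domain containing $C_0$.

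The step I expect to require the most care is confirming that $\boundary(G,\P)$ is locally path connected, since Theorem~\ref{thm:Bowditch} supplies only local connectedness. I will invoke the classical result that every Peano continuum (compact, connected, locally connected, metric space) is locally path connected. Once that is in hand, the remainder is a routine compactness-and-covering argument, relying on the standard observation that a union of connected sets sharing a common point is connected.
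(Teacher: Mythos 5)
Your proposal is correct and follows essentially the same route as the paper: both use Bowditch's theorem (connected, locally connected, no global cut point at $\rho$) to deduce path connectedness of $\boundary(G,\P)-\{\rho\}$, cover $C_0$ by finitely many small connected open sets whose closures avoid $\rho$, and then attach finitely many compact paths to make the union connected. The only differences are cosmetic (you ensure closures avoid $\rho$ via local compactness rather than a diameter bound $\epsilon/2 < d(\rho,C_0)/2$, and you wire the pieces to a common basepoint rather than joining components pairwise).
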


\begin{proof}
By Theorem~\ref{thm:Bowditch} the Bowditch boundary $\boundary(G,\P)$ is connected and locally connected, and the parabolic point $\rho$ is not a global cut point.  Thus $\boundary(G,\P) - \{\rho\}$ is an open, connected subset of the compact, locally connected, metrizable space $\boundary(G,\P)$.
It follows that $\boundary(G,\P) - \{\rho\}$ is path connected by \cite[31C.1]{Willard70}.

Let $d$ be a metric on $\boundary(G,\P)$, and let $\epsilon = d(\rho,C_0)$.  We can cover the compact set $C_0$ by finitely many open connected sets with diameter less then $\epsilon/2$.  In $\boundary(G,\P)$ the union of the closures of these sets is a compact set $C_1$ containing $C_0$ and having only finitely many components.  By our choice of $\epsilon$, the compact set $C_1$ is contained in $\boundary(G,\P) - \{\rho\}$.

Finally we form $C$ from $C_1$ by attaching finitely many compact paths in $\boundary(G,\P) - \{\rho\}$ that connect the finitely many components of $C_1$.
\end{proof}

\begin{proof}[Proof of Proposition~\ref{prop:ConnectedFD}]
Our strategy is to use the quotient map $\pi \colon \boundary X \to \boundary(G,\P)$ given by Theorem~\ref{thm:BoundaryQuotient}.  We will find an appropriate fundamental domain in the Bowditch boundary and then pull it back via $\pi$ to get a compact connected fundamental domain in $\boundary X$.  

Let $\rho$ be the parabolic point of $\boundary(G,\P)$ stabilized by $P$; i.e., $\{\rho\}$ is the image of $\boundary F$ in the Bowditch boundary.  By the definition of relative hyperbolicity, the action of $P$ on $\boundary(G,\P) - \{\rho\}$ has a compact fundamental domain  $C_0$.  Increasing the size of $C_0$, we may assume without loss of generality that $C_0$ intersects the finitely many translates $tC_0$ for all $t \in \mathcal{T}$.  By Lemma~\ref{lem:CompactConnectedRelative}, we can increase $C_0$ to a compact, connected fundamental domain $C$ intersecting its translates $tC$ for all $t \in \mathcal{T}$.

Recall that the quotient $\pi \colon \boundary X \to \boundary(G,\P)$ collapses connected sets to points; i.e. each member of the associated decomposition of $\boundary X$ is either a point or the boundary of a one-ended peripheral subgroup (in our case this boundary is a sphere).
By Corollary~\ref{cor:QuotientUSC}, the quotient $\pi$ is upper semicontinuous.
It follows from Proposition~\ref{prop:Monotone} that the preimage $K = \pi^{-1}(C)$ is compact and connected.
Theorem~\ref{thm:BoundaryQuotient} implies that $\pi$ is $G$--equivariant.  Thus $K$ is a fundamental domain for the action of $P$ on $\Upsilon$, and $K$ intersects its translates $tK$ for each $t \in \mathcal{T}$.
\end{proof}

Our goal for the rest of this section is to prove the following proposition. 

\begin{prop}
\label{prop:FlatLC}
Let $G$ be a one-ended group acting geometrically on a $\CAT(0)$ space with isolated flats.
Assume $G$ is atomic.
Then $\boundary X$ is weakly locally connected at any point in the boundary of any flat.
\end{prop}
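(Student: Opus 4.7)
The plan is to show that each basic cone neighborhood $U = U(c,r,D)$ of $\xi$---where $c$ is the ray in $F$ from a basepoint $x_0 \in F$ to $\xi$---contains a connected subneighborhood of $\xi$. The construction will combine a connected neighborhood of $\xi$ in the sphere $\boundary F$ with a union of carefully chosen $P$-translates of the compact connected fundamental domain $K \subset \Upsilon$ provided by Proposition~\ref{prop:ConnectedFD}. First I will fix a finite generating set $\mathcal{T}$ of $P$ and apply Proposition~\ref{prop:ConnectedFD} to obtain $K$ with $K \cap tK \neq \emptyset$ for all $t \in \mathcal{T}$; let $C_0 \subset F$ be the compact set from Lemma~\ref{lem:CompactTransfer} whose $P$-translates carry the orthogonal feet of rays ending in the $P$-translates of $K$.

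Given $U = U(c,r,D)$ with $D$ sufficiently large, I will choose auxiliary constants $D_{\text{in}} < D_{\text{out}} < D$ satisfying $D_{\text{out}} + \kappa + \diam(C_0) \le D$ and $D_{\text{out}} \geq D_{\text{in}} + R_0$, where $R_0$ is the cocompactness radius of the $P$-action on $F$. I will then set
\[
W := U(c,r,D_{\text{in}}) \cap \boundary F, \quad P' := \{\, p \in P : p(x_0) \in U(c,r,D_{\text{out}}) \,\}, \quad N := W \cup \bigcup_{p \in P'} pK.
\]
Lemma~\ref{lem:SmallTransfer} together with the first inequality will give $pK \subset U$ for every $p \in P'$, hence $N \subset U$; the same lemma applied in the reverse direction will show that for $\eta \in \Upsilon$ sufficiently close to $\xi$, any $p$ with $\eta \in pK$ lies in $P'$, so that, together with $W$ being a neighborhood of $\xi$ in $\boundary F$, the set $N$ is a neighborhood of $\xi$ in $\boundary X$.

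To see $N$ is connected, I will argue as follows. The set $W$ is connected, being a disk-shaped region in the sphere $\boundary F$. The union $\bigcup_{p \in P'} pK$ is connected provided $P'$ is connected in $\Cayley(P,\mathcal{T})$, since $p(K \cap tK) = pK \cap (pt)K$ guarantees that consecutive translates along a Cayley path overlap. Cayley-connectedness of $P'$ will follow from the fact that $U(c,r,D_{\text{out}}) \cap F$ is a convex Euclidean cone with its apex ball removed---hence path-connected for $k \ge 2$---together with the second inequality above, which ensures that orbit points inside the inner cone can be linked by generators via orbit points lying inside the outer cone. To glue $W$ to $\bigcup pK$, I will show that for any $\xi' \in W$ one can choose $p_n \in P'$ with $p_n(x_0) \to \xi'$ along the ray from $x_0$ to $\xi'$ in $F$ (possible by cocompactness); the orthogonal-ray correspondence from Lemma~\ref{lem:SmallTransfer} combined with the null-family property of Proposition~\ref{prop:UpsilonNull} will then yield $p_n K \to \{\xi'\}$ in the cone topology, so that $\xi' \in W \cap \overline{\bigcup_{p \in P'} pK}$.

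The hard part will be the constant calibration: simultaneously ensuring Cayley-connectedness of $P'$ (which requires $D_{\text{out}}$ to exceed a threshold set by $R_0$, $\mathcal{T}$, and the geometry of orbit points inside a thin Euclidean cone minus a ball) while maintaining $D_{\text{out}} + \kappa + \diam(C_0) \le D$. Both constraints will be compatible provided $D$ exceeds a uniform lower bound $D_0$ depending only on $P$, $\mathcal{T}$, $\kappa$, and $C_0$; since the subfamily $\{U(c,r,D_0)\}_{r > 0}$ already forms a neighborhood basis at $\xi$ by monotonicity of the cone topology, restricting to $D \ge D_0$ will not reduce generality.
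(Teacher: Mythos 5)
Your overall architecture is essentially the paper's: you use the compact connected fundamental domain $K$ from Proposition~\ref{prop:ConnectedFD}, chosen so that $K\cap tK\ne\emptyset$ for $t$ in a finite generating set $\mathcal{T}$, the orthogonal-ray transfer of Lemma~\ref{lem:SmallTransfer}, the null-family statement of Proposition~\ref{prop:UpsilonNull}, and you attach the spherical part $W$ as limit points of the union of translates, exactly as in the paper's Lemma~\ref{lem:ConnectedNeighborhood}. The difference is how the translates of $K$ are indexed: the paper takes a clean connected neighborhood $\bar N$ of $\xi$ in $\bar F$ (Proposition~\ref{prop:BarXLC}) and uses all $p$ with $pC\cap N\ne\emptyset$, so connectedness of the union follows at once from Lemma~\ref{lem:ConnectedTransfer}; you instead use explicit cone neighborhoods and try to prove Cayley-connectedness of $P'=\set{p\in P}{p(x_0)\in U(c,r,D_{\text{out}})}$.

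That step is where the proposal has a genuine gap. Your argument (inner-cone orbit points can be linked by generators through orbit points in the outer cone) shows only that the inner-cone elements lie in a single component of $P'$ in $\Cayley(P,\mathcal{T})$; it does not show that $P'$ itself is Cayley-connected, and elements of $P'$ whose orbit points sit near the frontier of $U(c,r,D_{\text{out}})\cap F$ need not be linkable inside that region at all---the fattening problem you solved for the inner cone simply recurs at the outer one. Since you defined the union over all of $P'$, its connectedness does not follow as stated; you would have to pass to the Cayley component containing the inner-cone elements, or insert a third nested cone (a detection cone for the neighborhood property, a middle cone indexing the union, an outer cone through which Cayley paths may travel), and then redo the calibration, also handling boundary effects you have not addressed: Lemma~\ref{lem:SmallTransfer} requires the foot of the orthogonal ray to lie outside $\cball{x_0}{r}$, which can fail for translates $pC_0$ with $p(x_0)$ just outside the sphere of radius $r$, and a fixed $\mathcal{T}$ only links orbit points at bounded distance, so $\mathcal{T}$ must be chosen after the cocompactness constant $R_0$. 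None of this is unfixable---it is exactly the calibration you flag as the hard part---but as written the central connectedness claim fails, and this is precisely the difficulty the paper's device avoids: by indexing translates through intersection of $pC$ with a connected subset $N\subset F$ and invoking the implication ``$C\cap pC\ne\emptyset$ implies $K\cap pK\ne\emptyset$,'' the paper needs no Euclidean cone geometry and no Cayley-graph connectivity argument, and the smallness estimates (Lemma~\ref{lem:ArbitrarilySmall}) are done with nested neighborhoods supplied by the null-family propositions rather than explicit constants.
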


The proof of Proposition~\ref{prop:FlatLC} depends on three lemmas.
Before discussing the lemmas, we outline the broad strategy that leads to the proof.
Recall that $\boundary X = \bar\Upsilon$ is similar in many ways to $\bar{F}$.
We know that $\bar{F}$ is locally connected at each point $\xi \in \boundary F$ by Proposition~\ref{prop:BarXLC}.
In order to prove that $\bar\Upsilon$ is also locally connected at $\xi$, we will describe a procedure for transferring small connected neighborhoods of $\xi$ from $\bar{F}$ to $\bar\Upsilon$, which is valid when $G$ does not peripherally split.

The atomic hypothesis implies that $\Upsilon$ is connected.
Obviously $F$ is also connected.
The foundation of our strategy is the following lemma, which allows us to transfer $0$--connectedness from $F$ to $\Upsilon$ using the fact that both $F$ and $\Upsilon$ are $0$--connected spaces on which the same group $P$ acts properly and cocompactly.

\begin{lem}
\label{lem:ConnectedTransfer}
Assume $G$ is atomic.
There exist compact, connected fundamental domains $C \subset F$ and $K \subset \Upsilon$ for the actions of $P$ on each, such that the following holds.
Let $\mathcal{P}$ be any subset of the group $P$.
\[
   \text{If $\bigcup_{p \in \mathcal{P}} pC$ is connected in $F$, then $\bigcup_{p \in \mathcal{P}} pK$ is connected in $\Upsilon$.}
\]
\end{lem}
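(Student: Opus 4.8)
The plan is to build the two fundamental domains $C$ and $K$ simultaneously, using a common finite generating set $\mathcal{T}$ of $P$ as the ``bookkeeping device'' that links the combinatorics of the two $P$--actions. First I would fix a finite generating set $\mathcal{T}$ for $P$ (which is finitely generated since it acts geometrically on the flat $F$). Using Proposition~\ref{prop:BarXLC} applied to the flat $F$, I would choose a compact connected fundamental domain $C \subset F$ for the $P$--action, enlarging it if necessary so that $C \cap tC \neq \emptyset$ for every $t \in \mathcal{T}$. Then I would invoke Proposition~\ref{prop:ConnectedFD} (which uses the one-ended and no-peripheral-splitting hypotheses) to obtain a compact connected fundamental domain $K \subset \Upsilon$ for the $P$--action with the matching property $K \cap tK \neq \emptyset$ for all $t \in \mathcal{T}$.

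The key observation is that, because of the two intersection conditions, the ``nerve'' controlling when a union of translates is connected is the same for both families. Precisely, for a subset $\mathcal{P} \subseteq P$, define a graph $\Gamma(\mathcal{P})$ with vertex set $\mathcal{P}$ and an edge between $p$ and $q$ whenever $q^{-1}p \in \mathcal{T} \cup \mathcal{T}^{-1}$. I would prove two claims. \emph{Claim 1:} if $\Gamma(\mathcal{P})$ is connected, then both $\bigcup_{p\in\mathcal{P}} pC$ and $\bigcup_{p\in\mathcal{P}} pK$ are connected; this follows because $pC \cap qC \supseteq p(C \cap (q^{-1}p)C) \neq \emptyset$ whenever $q^{-1}p \in \mathcal{T}\cup\mathcal{T}^{-1}$ (using that $C\cap t^{-1}C \neq\emptyset$ iff $tC \cap C \neq \emptyset$), so a path in $\Gamma(\mathcal{P})$ between $p$ and $q$ produces a chain of pairwise-intersecting connected translates, and likewise for $K$. \emph{Claim 2:} if $\bigcup_{p\in\mathcal{P}} pC$ is connected, then $\Gamma(\mathcal{P})$ is connected. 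For this I would argue by contradiction: if $\Gamma(\mathcal{P}) = \Gamma_1 \sqcup \Gamma_2$ is disconnected with vertex sets $\mathcal{P}_1, \mathcal{P}_2$, I need to show $\bigcup_{p\in\mathcal{P}_1} pC$ and $\bigcup_{p\in\mathcal{P}_2} pC$ are disjoint, which separates the union. Disjointness here uses that $C$ is a fundamental domain: if $pC \cap qC \neq\emptyset$ with $p \in \mathcal{P}_1$, $q\in\mathcal{P}_2$, then $C \cap (q^{-1}p)C \neq \emptyset$, and I would observe that the set $S = \set{s \in P}{C \cap sC \neq\emptyset}$ is finite (by properness/cocompactness of the $P$--action on $F$) and symmetric and generates $P$; replacing $\mathcal{T}$ at the outset by this set $S$ (still finite, and the analogous set for $K$ can be absorbed by enlarging $K$) makes $q^{-1}p \in \mathcal{T}$, contradicting that $p,q$ lie in different components of $\Gamma(\mathcal{P})$.

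Combining the two claims: if $\bigcup_{p\in\mathcal{P}} pC$ is connected, then by Claim 2 the graph $\Gamma(\mathcal{P})$ is connected, and then by Claim 1 the union $\bigcup_{p\in\mathcal{P}} pK$ is connected, which is exactly the desired implication. The main subtlety — and the step I expect to require the most care — is Claim 2, specifically choosing the generating set $\mathcal{T}$ correctly at the beginning so that it simultaneously (i) witnesses connectedness of $C$-translates via actual intersections, (ii) equals the full ``contact set'' $\set{s}{sC\cap C\neq\emptyset}$ so that the converse direction works, and (iii) can be accommodated on the $\Upsilon$ side by Proposition~\ref{prop:ConnectedFD} after enlarging $K$. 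The clean way to handle this is: first fix $C$ and let $\mathcal{T}$ be its contact set; then apply Proposition~\ref{prop:ConnectedFD} with generating set $\mathcal{T}$ to produce $K$; the resulting $K$ automatically has $K \cap tK \neq\emptyset$ for all $t\in\mathcal{T}$, which is all Claim 1 needs on the $K$ side (we do not need the converse for $K$). One should also note that $F$ and $\Upsilon$ being $0$--connected is used only insofar as it guarantees the fundamental domains $C$ and $K$ can be taken connected in the first place; once that is arranged the argument is purely combinatorial.
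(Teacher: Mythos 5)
Your proposal, after the self-correction in the final paragraph (fix $C$ first, take $\mathcal{T}$ to be the full contact set $\set{t\in P}{C\cap tC\neq\emptyset}$, then invoke Proposition~\ref{prop:ConnectedFD} to build $K$ with $K\cap tK\neq\emptyset$ for all $t\in\mathcal{T}$), is correct and is essentially the paper's argument. The paper summarizes the implication with the phrase ``This condition easily implies our conclusion,'' and your graph $\Gamma(\mathcal{P})$ with Claims 1 and 2 is a valid way to unpack that phrase.
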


\begin{proof}
Choose a compact connected fundamental domain $C$ for the action of $P$ on $F$.  Let $\mathcal{T}$ be the set of elements $t \in P$ such that $C$ intersects $tC$. Then $\mathcal{T}$ is a finite generating set for $P$.
Choose a compact connected fundamental domain $K$ for the action of $P$ on $\Upsilon$ as given by Proposition~\ref{prop:ConnectedFD} such that $K$ intersects $tK$ for all $t \in \mathcal{T}$.
This intersection property immediately implies the following condition that holds for all $p \in P$:
\[
   \text{If $C \cap pC$ is nonempty, then $K \cap pK$ is nonempty.}
\]
This condition easily implies our conclusion.
\end{proof}

The following terminology and notation will be used throughout the rest of this section and the eventual proof of Proposition~\ref{prop:FlatLC}.
Let $C$ and $K$ be the compact, connected fundamental domains given by the previous lemma.
By Lemma~\ref{lem:CompactTransfer} there exists a geodesic ray $c'$ in $X$ meeting $F$ orthogonally. We will treat the points $q_0 = c'(0) \in F$ and $q_\infty = c'(\infty) \in \Upsilon$ as basepoints in $F$ and $\Upsilon$ respectively.
Translating $C$ and $K$ by the cocompact group actions, we can also assume that $q_0 \in C$ and $q_\infty \in K$.

Suppose $\xi \in \boundary F$.  As mentioned above, our strategy for proving Proposition~\ref{prop:FlatLC} is to transfer small connected neighborhoods of $\xi$ in $\bar{F}$ to small connected neighborhoods of $\xi$ in $\bar\Upsilon$.
To facilitate this transfer, we assume that the given neighborhood $\bar{N}$ of $\xi$ in $\bar{F}$ is \emph{clean} in the sense that $N = \bar{N} \cap F$ is connected, and each point of $\bar{N}$ is a limit point of $N$.
Recall that $\xi$ has a local base of clean connected neighborhoods by Proposition~\ref{prop:BarXLC}.

For each clean connected neighborhood $\bar{N}$, we will define a corresponding set $\bar{Z}$ in $\bar\Upsilon = \boundary X$.  In the two subsequent lemmas, we will show that $\bar{Z}$ is a connected neighborhood of $\xi$ in $\Upsilon$, and that $\bar{Z}$ can be chosen arbitrarily small.  We begin with the construction of $\bar{Z}$.

\begin{defn}[Associated neighborhoods]
Suppose $\xi \in \boundary F$ and $\bar{N}$ is a clean connected neighborhood of $\xi$ in $\bar{F}$.
Let $N = \bar{N} \cap F$, and let $\Lambda = \bar{N} \cap \boundary F$.
Let $\mathcal{P}$ be the set of all $p \in P$ such that $pC$ intersects $N$.
The corresponding set $Z \subset \Upsilon$ is the union $\bigcup_{p \in \mathcal{P}} pK$.
Finally the \emph{$\bar\Upsilon$--neighborhood  associated to $\bar{N}$} is the set $\bar{Z} = Z \cup \Lambda$.
\end{defn}

For this definition to make sense, we must verify that $\bar{Z}$ is actually a neighborhood of $\xi$ in $\bar\Upsilon$.  The next lemma establishes that $\bar{Z}$ is, in fact, a (clean) connected neighborhood.

\begin{lem}
\label{lem:ConnectedNeighborhood}
For any clean, connected neighborhood $\bar{N}$ of $\xi$ in $\bar F$, the $\bar\Upsilon$--neighborhood $\bar{Z}$ associated to $\bar{N}$, defined above, is a connected neighborhood of $\xi$ in $\bar\Upsilon$.
\end{lem}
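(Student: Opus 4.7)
The plan is to verify the two required properties of $\bar{Z}$ separately, in each case exploiting the similarity between $\bar{F}$ and $\bar{\Upsilon}$ encoded by Lemmas~\ref{lem:CompactTransfer}, \ref{lem:SmallTransfer}, and~\ref{lem:ConnectedTransfer}. As a preparatory step I would enlarge $C$ (if necessary) so that $C$ contains a ``foot set'' for $K$ in the sense of Lemma~\ref{lem:CompactTransfer}; this is compatible with keeping $C$ a compact connected fundamental domain and does not affect Lemma~\ref{lem:ConnectedTransfer}.

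First I would argue $Z$ is connected. Since the $P$--translates of $C$ cover $F$, each point of $N$ lies in some $pC$ with $p \in \mathcal{P}$, so $N \subseteq \bigcup_{p\in\mathcal{P}} pC$. Each $pC$ with $p\in\mathcal{P}$ is connected and meets the connected set $N$, so $\bigcup_{p\in\mathcal{P}} pC$ is connected. Applying Lemma~\ref{lem:ConnectedTransfer} yields that $Z = \bigcup_{p\in\mathcal{P}} pK$ is connected in $\Upsilon$. Next I would show every $\eta \in \Lambda$ is a limit point of $Z$ in $\partial X$. Because $\bar{N}$ is clean, choose $x_n \in N$ with $x_n \to \eta$ in $\bar{F}$, and for each $n$ pick $p_n \in \mathcal{P}$ with $x_n \in p_n C$. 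Then $d(x_n, p_n q_0) \le \diam(C)$, and a standard fact about bounded displacement in the cone topology (applied within the flat $F$) gives $p_n q_0 \to \eta$. Applying Lemma~\ref{lem:SmallTransfer} to the orthogonal ray from $p_n q_0$ to $p_n q_\infty$ transfers this convergence to $p_n q_\infty \to \eta$ in $\bar{X}$; since $p_n q_\infty \in p_n K \subseteq Z$, $\eta$ lies in $\overline{Z}$. Hence $\bar{Z}$ sits between the connected set $Z$ and its closure, and is therefore connected.

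Finally I would verify that $\bar{Z}$ is a neighborhood of $\xi$ in $\partial X$. Since $F$ is convex, the cone topology on $\bar{F}$ is the subspace topology from $\bar{X}$, so the $\bar{F}$--neighborhood $\bar{N}$ of $\xi$ contains a basic cone neighborhood $U(c_\xi, r_0, D_0)\cap \bar{F}$, and a standard refinement (fixing $D_0 = \kappa + 1$ and taking $r_0$ large) lets me assume $D_0 > \kappa$. Set $V = U(c_\xi, r_0, D_0 - \kappa) \cap \partial X$, an open neighborhood of $\xi$ in $\partial X$. Points of $V \cap \partial F$ lie automatically in $U(c_\xi, r_0, D_0) \cap \partial F \subseteq \Lambda$. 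For $\eta \in V \cap \Upsilon$, Lemma~\ref{lem:CompactTransfer} (together with the enlargement of $C$) produces a ray $c'$ from some $x \in pC$ to $\eta$ meeting $F$ orthogonally, where $\eta \in pK$. The converse direction of Lemma~\ref{lem:SmallTransfer} then forces $x \in U(c_\xi, r_0, D_0) \cap F \subseteq N$, so $pC \cap N \ni x$, whence $p \in \mathcal{P}$ and $\eta \in pK \subseteq Z \subseteq \bar{Z}$.

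The main obstacle is the neighborhood step, where one must carefully push the cone-neighborhood estimates through Lemma~\ref{lem:SmallTransfer} in both directions, using the $\kappa$--buffer to transfer between the foot $x \in F$ and the ideal point $\eta \in \Upsilon$. The preparatory enlargement of $C$ to contain a foot set for $K$ is essential to guarantee that the $p$ produced from the orthogonal ray actually belongs to $\mathcal{P}$.
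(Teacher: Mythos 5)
Your connectedness argument is essentially the paper's: $N\subseteq\bigcup_{p\in\mathcal{P}}pC$ with each $pC$ meeting the connected set $N$, transfer via Lemma~\ref{lem:ConnectedTransfer}, then show each point of $\Lambda$ is a limit of the points $p_n(q_\infty)\in Z$ using Lemma~\ref{lem:SmallTransfer}; that half is fine. The problem is your preparatory step. You enlarge $C$ so that it contains a foot set for $K$ and assert this ``does not affect Lemma~\ref{lem:ConnectedTransfer}.'' That assertion is unjustified, and as stated it is false for the pair (enlarged $C$, same $K$): in the proof of Lemma~\ref{lem:ConnectedTransfer} the fundamental domain $K$ is chosen \emph{after} $C$, via Proposition~\ref{prop:ConnectedFD}, precisely so that $K\cap tK\neq\emptyset$ for every $t$ in the finite set $\mathcal{T}=\{t: C\cap tC\neq\emptyset\}$ determined by $C$. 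Enlarging $C$ enlarges $\mathcal{T}$, and the already-fixed $K$ need not meet $tK$ for the new elements $t$, so the key implication ``$C\cap pC\neq\emptyset\Rightarrow K\cap pK\neq\emptyset$'' (hence the connectedness transfer you invoke) can fail. On the other hand, the foot set you want inside $C$ is produced by Lemma~\ref{lem:CompactTransfer} \emph{from} $K$, so it cannot be arranged before $K$ is chosen. This circular dependency is exactly what your phrase ``does not affect'' sweeps under the rug, and you yourself flag the enlargement as essential to your neighborhood step, so the gap is not cosmetic.

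The gap is repairable but needs a real argument: for instance, after enlarging $C$ you can replace $K$ by a finite union of translates of the original $K$ (chains $K\cup t_1K\cup\dots$) to restore the intersection property for the larger $\mathcal{T}$, and then in the neighborhood step use the translate of the \emph{original} small $K$ containing $\eta$, whose foot set does lie in the corresponding translate of $C$. Note, though, that the paper avoids the issue entirely: it never needs a foot set for all of $K$, only the single orthogonal ray $c'$ with $q_0=c'(0)\in C$ and $q_\infty=c'(\infty)\in K$ (arranged by translating, not enlarging). It then uses the null-family property of the translates $\{pK\}$ in $\bar\Upsilon$ (Propositions~\ref{prop:UpsilonNull} and~\ref{prop:NullAlmostUSC}) to find a neighborhood $V$ of $\xi$ such that any $pK$ meeting $V$ lies entirely in $U(c,R,D+\kappa)$; in particular $p(q_\infty)\in U(c,R,D+\kappa)$, so Lemma~\ref{lem:SmallTransfer} puts $p(q_0)\in U(c,R,D)\cap F\subseteq N$, giving $p\in\mathcal{P}$. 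Your converse use of Lemma~\ref{lem:SmallTransfer} on a ray ending at $\eta$ itself is a legitimate alternative to that null-family argument, but only once the foot-set/fundamental-domain compatibility above is honestly established.
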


\begin{proof}
We first verify that $\bar{Z}$ is a neighborhood of $\xi$ in $\bar\Upsilon$.
Let $\kappa$ be the constant given by Theorem~\ref{thm:facts}, and let $D>0$ be an arbitrary constant.
Choose a geodesic ray $c$ in $F$ with $c(\infty) = \xi$.
Since $\bar N$ is a neighborhood of $\xi$ in $\bar{F}$, we can choose $R$ large enough so that $U(c,R,D) \cap \bar{F}$ lies inside $\bar N$.
By Proposition~\ref{prop:UpsilonNull}, the collection of $P$--translates of our compact fundamental domain $K$ is a null family in $\bar\Upsilon$.
Therefore there exists a neighborhood $V$ of $\xi$ in $\bar\Upsilon$ such that $V \subseteq U(c,R,D+\kappa)$, and such that every $pK$ intersecting $V$ is contained in $U(c,R,D+\kappa)$ by Proposition~\ref{prop:NullAlmostUSC}.

It follows that $V \subseteq \bar{Z}$. 
Indeed, each element $\eta \in V$ either lies in $\Upsilon$ or in $\boundary F$.
In the first case, by our choice of $K$, the point $\eta$ lies in $pK$ for some $p \in P$.
Each such $pK$ is contained in $U(c,R,D+\kappa)$.
In particular,
$p ( q_\infty)$ lies in $U(c,R,D+\kappa)\subset\bar\Upsilon$.
Thus $p(q_0) \in U(c,R,D) \subset \bar{F}$ by Lemma~\ref{lem:SmallTransfer}.
Since $p(q_0) \in pC$,
our choice of $R$ implies that the $P$--translate $pC$ intersects $N$.
By the definition of $Z$, we have $\eta\in pK \subset Z \subset \bar{Z}$.

In the second case, we have $\eta \in V \cap \boundary F$, so
\[
   \eta \ \in \ U(c,R,D+\kappa) \cap \boundary F  \ \subseteq\   U(c,R,D) \cap \boundary F \ \subseteq\ \bar{N} \cap \boundary F \ = \ \Lambda \ \subseteq \ \bar{Z}.
\]
Combining the two cases, we see that $V \subseteq \bar{Z}$, as desired.
It follows that $\bar{Z}$ is a neighborhood of $\xi$ in $\bar\Upsilon$.

Next we will see that $\bar{Z}$ is connected.  The cleanliness of $\bar{N}$ means that $N = \bar{N} \cap F$ is connected.  Recall that $\mathcal{P}$ is the set of all $p$ such that $pC$ intersects $N$.
Since $C$ is connected, the union $\hat{N} = \bigcup_{p \in \mathcal{P}} pC$ is also connected.  By Lemma~\ref{lem:ConnectedTransfer}, the union $Z = \bigcup_{p \in \mathcal{P}} pK$ is connected as well.

In order to show that $\bar{Z} = Z \cup \Lambda$ is connected, it suffices to show that every point of $\Lambda$ is a limit point of $Z$.
Since $\bar{N}$ is clean, each point $\zeta \in \Lambda$ is a limit of a sequence $\{x_i\}$ in $N$.  Each $x_i \in p_iC$ for some $p_i \in \mathcal{P}$, and by Proposition~\ref{prop:YNullTranslates} the sequence $\bigl\{ p_i(q_0) \bigr\}$ also converges to $\zeta$.
By Lemma~\ref{lem:SmallTransfer}, the sequence $\bigl\{ p_i(q_\infty) \bigr\}$ converges to $\zeta$ as well.  Since $q_\infty \in K$, we have $p_i (q_\infty) \in p_i K \subseteq Z$.  Thus $\zeta$ is a limit point of $Z$.
\end{proof}

\begin{lem}
\label{lem:ArbitrarilySmall}
The $\bar{\Upsilon}$--neighborhood $\bar{Z}$ associated to $\bar{N}$ can be made arbitrarily small by choosing $\bar{N}$ to be a sufficiently small neighborhood of $\xi$.
\end{lem}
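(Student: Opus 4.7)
Given any neighborhood $U$ of $\xi$ in $\bar\Upsilon$, we must find a neighborhood $\bar N$ of $\xi$ in $\bar F$ whose associated $\bar Z$ is contained in $U$. Without loss of generality, $U = U(c,r,D) \cap \bar\Upsilon$ for a ray $c$ in $F$ with $c(\infty) = \xi$. The strategy combines two ingredients: the null family $\{pK\}$ in $\bar\Upsilon$ from Proposition~\ref{prop:UpsilonNull}, and the transfer Lemma~\ref{lem:SmallTransfer} bridging $\bar F$ and $\bar\Upsilon$.

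First, since $\xi \in \partial F$ lies in no $pK \subseteq \Upsilon$, Proposition~\ref{prop:NullAlmostUSC} furnishes a smaller neighborhood $V$ of $\xi$ in $\bar\Upsilon$ such that every $pK$ meeting $V$ lies inside $U$. Since the sets $U(c,s,2\kappa)$ with $\kappa$ the constant of Theorem~\ref{thm:facts} form a neighborhood basis at $\xi$ as $s \to \infty$, we may shrink $V$ to $U(c, R_V, 2\kappa) \cap \bar\Upsilon$. I then propose to take $\bar N = U(c, R, \kappa) \cap \bar F$, where $R$ is to be chosen large. Since $F$ is convex in $X$, this set coincides with the basic neighborhood $U_F(c,R,\kappa)$ computed intrinsically in $\bar F$, which is a clean connected neighborhood of $\xi$ in $\bar F$ by Proposition~\ref{prop:BarXLC} applied to $F$. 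Moreover, CAT(0) nesting of basic sets forces $\bar N \subseteq U$ once $R$ is large, so $\Lambda = \bar N \cap \partial F \subseteq U$ automatically.

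It remains to check $Z \subseteq U$. For $p \in \mathcal{P}$, choose $x \in pC \cap N$; then $d(x, p(q_0)) \leq \diam(C)$ and both points lie in the flat $F$. Euclidean geometry in $F$, equivalently CAT(0) convexity of the radial distance function along geodesics emanating from $c(0)$, implies that the angular direction of $p(q_0)$ from $c(0)$ differs from that of $x$ by at most $O\!\bigl(\diam(C)/R\bigr)$, which tends to zero. Hence for $R$ sufficiently large, $p(q_0) \in U(c, R_V, \kappa)$, and by Lemma~\ref{lem:SmallTransfer}, $p(q_\infty) \in U(c, R_V, 2\kappa) \subseteq V$. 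Since $p(q_\infty) \in pK$, we conclude $pK \cap V \neq \emptyset$, so $pK \subseteq U$. Uniting over $p \in \mathcal{P}$ gives $Z \subseteq U$, hence $\bar Z = Z \cup \Lambda \subseteq U$, as required. The main technical point is the angular/convexity estimate inside the flat, which converts a uniformly bounded displacement $\diam(C)$ between $x$ and $p(q_0)$ into a vanishing angular perturbation as one moves far out along $c$; the rest is a routine packaging of the null family, nesting, and transfer tools already developed in this section.
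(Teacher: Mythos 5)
Your argument is correct and is structurally the same as the paper's proof: you obtain $V$ from the null family $\{pK\}$ (Proposition~\ref{prop:UpsilonNull} together with Proposition~\ref{prop:NullAlmostUSC}), push $p(q_\infty)$ into $V$ via Lemma~\ref{lem:SmallTransfer}, take a clean connected basic neighborhood supplied by Proposition~\ref{prop:BarXLC}, and handle $\Lambda$ by nesting of cone neighborhoods. The only deviation is that where the paper cites Proposition~\ref{prop:YNullTranslates} to ensure that any translate $pC$ meeting $\bar{N}$ forces $p(q_0)$ into the good set, you substitute a direct convexity/angular estimate inside the flat, which is valid and uniform in $p$ since $\diam(C)$ is fixed; just note the small slip that ``$\bar{N} \subseteq U$'' should read $\bar{N} \subseteq U(c,r,D)$ in $\bar{X}$ (so that $\Lambda \subseteq U$), since $U$ lies in $\boundary X$ while $N \subset F$.
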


\begin{proof}
Let $U$ be a neighborhood of $\xi$ in $\bar\Upsilon = \boundary X$.  Our goal is to show that if $\bar{N}$ is chosen appropriately, its associated neighborhood $\bar{Z}$ will be contained in $U$.
By Proposition~\ref{prop:UpsilonNull}, there is a neighborhood $V$ of $\xi$ in $\bar\Upsilon$ such that $V \subseteq U$ and every $pK$ intersecting $V$ is contained in $U$.
By Lemma~\ref{lem:SmallTransfer}, there is a neighborhood $W$ of $\xi$ in $\bar{F}$ such that 
$W \cap \boundary F \subset V$ and such that
if $p(q_0) \in W$ then $p(q_\infty) \in V$ and hence $pK \subset U$.
Proposition~\ref{prop:YNullTranslates} gives a neighborhood $W'$ of $\xi$ in $\bar{F}$ such that 
$W' \subseteq W$ and every $pC$ intersecting $W'$ is contained in $W$.
Due to Proposition~\ref{prop:BarXLC}, there is a clean connected neighborhood $\bar{N}$ of $\xi$ inside $W'$.

It follows that the associated $\bar\Upsilon$--neighborhood $\bar{Z}$ is contained in $U$.  Indeed, each element $\eta \in \bar{Z}$ either lies in $Z$ or in $\Lambda=\bar{N} \cap \boundary F$.
Suppose first that $\eta \in Z$.  Then $\eta \in pK$ for some $p$ such that $pC$ intersects $N$, and $N \subseteq W'$.  In this case, it is clear from our choices above that $\eta \in U$.
On the other hand, suppose $\eta\in\Lambda$.  Since $\Lambda \subseteq W \cap \boundary F$, it is contained in $V$, which is contained in $U$.  We have shown that $\bar{Z} \subseteq U$, as needed.
\end{proof}

At this point the proof of Proposition~\ref{prop:FlatLC} is nearly complete.

\begin{proof}[Proof of Proposition~\ref{prop:FlatLC}]
Suppose $\xi \in \boundary F$.  We must show that $\boundary X = \bar\Upsilon$ is weakly locally connected at $\xi$.
The combination of Lemmas 
~\ref{lem:ConnectedNeighborhood} and~\ref{lem:ArbitrarilySmall} implies that $\xi$ has arbitrarily small connected neighborhoods (that are not necessarily open).
\end{proof}

Combining Corollary~\ref{cor:RankOneLC} and Proposition~\ref{prop:FlatLC} completes the proof of Theorem~\ref{thm:UnsplittableLC}.

\section{The limit of a tree system of spaces}
\label{sec:TreeLimits}

In order to complete the proof of Theorem~\ref{thm:MainThm}, we need to examine the boundary of a one-ended $\CAT(0)$ group $G$ with isolated flats whose maximal peripheral splitting is nontrivial and locally finite.

By Corollary~\ref{cor:ComponentCAT0}, the component vertex groups of this splitting are atomic $\CAT(0)$ groups with isolated flats.  Therefore by Theorem~\ref{thm:UnsplittableLC} the boundary of each component vertex group is locally connected.
We will see in Section~\ref{sec:PuttingTogether} that the visual boundary of $G$ is obtained by gluing copies of the component group boundaries along spheres in the pattern of the Bass-Serre tree.  
The tool necessary to make this precise is 
the notion of a tree system of spaces, introduced by \Swiatkowski\ in \cite{SwiatkowskiTreesOfCompacta}.

In this section we present the definition of the limit of a tree system of metric compacta.  We also prove Theorem~\ref{thm:TreeLimLC}, which roughly states that a tree system of locally connected spaces has a limit that is also locally connected.

\begin{defn}
A \emph{tree} is a connected nonempty graph without circuits.  We use Serre's notation for graphs with oriented edges.  A graph has a vertex set $\mathcal{V}$, a set of oriented edges $\mathcal{E}$, a map $\mathcal{E} \to \mathcal{V} \times \mathcal{V}$ denoted $e \mapsto \bigl(o(e), t(e)\bigr)$, and an involution $\mathcal{E} \to \mathcal{E}$ denoted $e \mapsto \bar{e}$.  We require that $\bar{e}\ne e$ and that $o(e)=t(\bar{e})$.  The vertices $o(e)$ and $t(e)$ are the \emph{origin} and \emph{terminus} of the edge $e$.  We refer the reader to \cite{Serre77} for more details.
\end{defn}

\begin{defn}\label{defn:TreeSystem}
A \emph{tree system $\Theta$ of metric compacta} consists of the following data:
  \begin{enumerate}
  \item $T$ is a bipartite tree with a countable vertex set $\mathcal{V} = \mathcal{C} \coprod \mathcal{P}$ such that each vertex $w \in \mathcal{P}$ has finite valence.
  \item To each vertex $v \in \mathcal{V}$ there is associated a compact metric space $K_v$.
  \item To each edge $e \in \mathcal{E}$ there is associated a compact metric space $\Sigma_e$, a homeomorphism $\phi_e \colon \Sigma_e \to \Sigma_{\bar{e}}$ such that $\phi_{\bar{e}}=\phi_e^{-1}$,
and a topological embedding $i_e \colon \Sigma_e \to K_{t(e)}$.
  \item For each $v \in \mathcal{C}$ the family of subspaces $\set{i_e(\Sigma_e)}{t(e)=v}$ is null and consists of pairwise disjoint sets. We will refer to the spaces $K_v$ for $v\in\mathcal{V}$ as \emph{component spaces}. 
  \item For each $w \in \mathcal{P}$ and each $e$ with $t(e)=w$ the map $i_e \colon \Sigma_e \to K_w$ is a homeomorphism. We will refer to the spaces $K_w$ for $w\in\mathcal{P}$ as \emph{peripheral spaces}.
  \end{enumerate}
The tree system $\Theta$ is \emph{degenerate} if the tree $T$ contains only one vertex, and that vertex is peripheral.
\end{defn}

\begin{rem}
The definition above of tree system is slightly more general than the one used by \Swiatkowski\ in \cite{SwiatkowskiTreesOfCompacta}. In the special case that each vertex $w \in \mathcal{P}$ has valence two, our definition is equivalent to \Swiatkowski's using a barycentric subdivision of his tree.  The proofs in \cite{SwiatkowskiTreesOfCompacta} generalize to tree systems in the sense of Definition~\ref{defn:TreeSystem} with only minor modifications.
\end{rem}

Let $\# \Theta$ denote the quotient $\bigl(\coprod_{v \in \mathcal{C} \cup \mathcal{P}} K_v\bigr)/\!\sim$ by the equivalence relation generated by $i_e(x) \sim i_{\bar{e}} \phi_e (x)$ for all edges $e \in \mathcal{E}$ and all $x \in \Sigma_e$
endowed with the quotient topology.

For each subtree $S$ of $T$
let $\mathcal{V}_S = \mathcal{C}_S \coprod \mathcal{P}_S$ and $\mathcal{E}_S$ denote the set of vertices and the set of edges of $S$. Let $\Theta_S$ denote the restriction of the tree system $\Theta$ to the subtree $S$.
Let $N_S=\set{e \in \mathcal{E}}{o(e) \notin \mathcal{V}_S \text{ and } t(e) \in \mathcal{V}_S}$ be the set of oriented edges adjacent to $S$ but not contained in $S$, oriented towards $S$.

\begin{defn}[Limit of a tree system]
\label{defn:TreeLimit}
For each finite subtree $F$ of $T$, the \emph{partial union} $K_F$ is defined to be $\# \Theta_F$.  Since $F$ is finite it follows that $K_F$ is compact and metrizable.
Let $\mathcal{A}_F = \set{i_e(\Sigma_e)}{e \in N_F}$.
We consider $\mathcal{A}_F$ to be a family of subsets of $K_F$, and note that this is a null family that consists of pairwise disjoint compact sets.  Let $K_F^* = K_F / \mathcal{A}_F$, the quotient formed by collapsing each set in $\mathcal{A}_F$ to a point.
By Propositions \ref{prop:NullUSC} and \ref{prop:Metrizable},
the quotient $K_F^*$ is metrizable.

For each pair of finite subtrees $F_1 \subseteq F_2$, let $f_{F_1F_2} \colon K^*_{F_2} \to K^*_{F_1}$ be the quotient map obtained by collapsing $K_s$ to a point for each $s \in \mathcal{V}_{F_2} - \mathcal{V}_{F_1}$ and identifying the resulting quotient space with $K_{F_1}^*$.  Since $f_{F_1 F_2} \of f_{F_2 F_3} = f_{F_1 F_3}$ whenever $F_1 \subseteq F_2 \subseteq F_3$, the system of spaces $K_F^*$ and maps $f_{F F'}$ where $F\subseteq F'$ is an inverse system of metric compacta indexed by the poset of all finite subtrees $F$ of $T$.

The \emph{limit $\lim \Theta$ of the tree system} $\Theta$ is the inverse limit of the above inverse system.  Observe that $\lim\Theta$ is compact and metrizable, since it is an inverse limit of a countable system of compact metrizable spaces.
\end{defn}

A function $f \colon Y \to Z$ is \emph{monotone} if $f$ is surjective and for each $z \in Z$ the preimage $f^{-1}(z)$ is compact and connected.
The following theorem due to Capel is used in the proof of Theorem~\ref{thm:TreeLimLC}.

\begin{thm}[\cite{Capel54}]
\label{thm:Capel}
Let $\{X_\alpha\}$ be an inverse system such that each bonding map $X_\alpha \to X_{\beta}$ is monotone.
If each factor space $X_\alpha$ is compact and locally connected then the inverse limit $\varprojlim X_\alpha$ is locally connected.
\end{thm}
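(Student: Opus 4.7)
The plan is to deduce Capel's theorem from two standard facts about inverse limits of compact Hausdorff spaces. First, the projection maps $p_\alpha \colon X \to X_\alpha$ from the inverse limit $X = \varprojlim X_\alpha$ are themselves monotone, provided all bonding maps are monotone (and the index set is directed). Second, for any continuous surjection $f \colon Y \to Z$ of compact Hausdorff spaces with connected fibers, the preimage $f^{-1}(C)$ of a connected subset $C \subseteq Z$ is connected. Given these two facts, local connectedness of $X$ follows quickly from local connectedness of the factors, together with the structure of basic open sets in the inverse limit topology.

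For the first fact, each fiber $p_\alpha^{-1}(x_\alpha)$ is naturally identified with the inverse limit, over $\beta \geq \alpha$, of the fibers $f_{\alpha\beta}^{-1}(x_\alpha)$. Each such fiber is compact, non-empty, and connected by monotonicity of $f_{\alpha\beta}$; an inverse limit of non-empty compact connected Hausdorff spaces over a directed index set is non-empty, compact, and connected by a standard argument, giving both surjectivity of $p_\alpha$ and connectedness of its fibers. For the second fact, I would suppose $f^{-1}(C) = A \sqcup B$ is a separation. Connectedness of the fibers $f^{-1}(c)$ forces each to lie entirely in $A$ or in $B$, so $C$ partitions as $C_A \sqcup C_B$ where $C_A = \set{c \in C}{f^{-1}(c) \subseteq A}$. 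Since continuous maps between compact Hausdorff spaces are closed, $f(A)$ and $f(B)$ are closed in $Z$, whence $C_A = C \setminus f(B)$ and $C_B = C \setminus f(A)$ are open in $C$, contradicting its connectedness.

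The main argument is then short. Given $x \in X$ and an open neighborhood $U$ of $x$, I would use the inverse limit topology to find an index $\alpha$ and an open set $W \subseteq X_\alpha$ with $p_\alpha(x) \in W$ and $p_\alpha^{-1}(W) \subseteq U$. Local connectedness of $X_\alpha$ yields a connected open set $V \subseteq W$ with $p_\alpha(x) \in V$; then $p_\alpha^{-1}(V)$ is open in $X$, contains $x$, is contained in $U$, and is connected by the second fact applied to the monotone surjection $p_\alpha$. Hence $X$ is locally connected at $x$, as desired.

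The main bookkeeping obstacle I expect is justifying that the basic open sets of the inverse limit can be chosen of the form $p_\alpha^{-1}(W)$ rather than a finite intersection of such. In general only finite intersections $\bigcap_i p_{\alpha_i}^{-1}(W_i)$ are basic open, but for a directed inverse system---the case relevant to the application to Theorem \ref{thm:TreeLimLC}, where the index poset consists of finite subtrees of $T$ under inclusion---one passes to a common upper bound $\beta \geq \alpha_i$ and combines the $W_i$ via preimages under the bonding maps $f_{\alpha_i\beta}$, so this difficulty evaporates.
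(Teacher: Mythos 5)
The paper gives no argument for this statement: it is quoted verbatim from \cite{Capel54} and used as a black box in the proof of Theorem~\ref{thm:TreeLimLC}, so there is no internal proof to compare yours against. Judged on its own, your route is the standard one and it does succeed: (i) over a directed index set each projection $p_\alpha\colon X=\varprojlim X_\alpha\to X_\alpha$ is a monotone surjection, since $p_\alpha^{-1}(x_\alpha)$ is the inverse limit, over the cofinal set $\{\beta\ge\alpha\}$, of the nonempty compact connected fibers $f_{\alpha\beta}^{-1}(x_\alpha)$; (ii) a closed monotone surjection pulls connected sets back to connected sets; (iii) by directedness every basic open set of $X$ may be taken of the form $p_\alpha^{-1}(W)$, so a connected open $V\subseteq W$ around $p_\alpha(x)$ yields the connected open neighborhood $p_\alpha^{-1}(V)\subseteq U$. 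Your treatment of (i) and (iii) is correct, and (ii) is in essence the fact the paper already records as Proposition~\ref{prop:Monotone}(2) (Daverman), applied to the upper semicontinuous decomposition of $X$ into the fibers of the closed map $p_\alpha$; you could simply invoke that instead of reproving it.

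The one step that is not justified as written is inside your proof of (ii): you claim $f(A)$ and $f(B)$ are closed in $Z$ because continuous maps of compact Hausdorff spaces are closed. But $A$ and $B$ are only closed in $f^{-1}(C)$, and $f^{-1}(C)$ is not closed in $Y$ unless $C$ is closed in $Z$ --- and in the intended application $C=V$ is \emph{open}, so this is precisely the case you cannot ignore (already for the identity map on $[0,1]$ with $C=(0,1)$, the relatively closed set $(0,\tfrac12]$ has non-closed image). The conclusion you need is true and the repair is one line: since $f^{-1}(C)$ is saturated, the restriction $f|_{f^{-1}(C)}\colon f^{-1}(C)\to C$ of the closed map $f$ is again closed (if $A=\bar A\cap f^{-1}(C)$ then $f(A)=f(\bar A)\cap C$), so $f(A)$ and $f(B)$ are closed \emph{in $C$}, which is all that is required to see that $C_A$ and $C_B$ are relatively open and disconnect $C$. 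Equivalently, use the standard property of closed maps that any open set containing a fiber $f^{-1}(c)$ contains $f^{-1}(W)$ for some open $W\ni c$. With that patch your argument is complete.
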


\begin{thm}
\label{thm:TreeLimLC}
The limit $\lim \Theta$ of a nondegenerate tree system $\Theta$ is locally connected, provided that each component vertex space $K_v$ with $v \in \mathcal{C}$ is connected and locally connected and that each peripheral vertex space $K_w$ with $w \in \mathcal{P}$ is nonempty.
\end{thm}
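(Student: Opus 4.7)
The plan is to invoke Capel's theorem (Theorem \ref{thm:Capel}) applied to the defining inverse system $\{K_F^*, f_{F_1 F_2}\}$ for $\lim \Theta$. Definition \ref{defn:TreeLimit} already supplies that each $K_F^*$ is compact and metrizable, so the task is to check (i) that each $K_F^*$ is locally connected, and (ii) that each bonding map $f_{F_1 F_2}$ is monotone (surjective with compact connected fibers). Surjectivity is immediate from the construction.

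For (ii), the fibers can be computed directly from the definition of $f_{F_1 F_2}$. A point of $K_{F_1}^*$ that does not come from a collapsed sphere has singleton preimage in $K_{F_2}^*$. A point $y \in K_{F_1}^*$ arising from collapsing a sphere $i_e(\Sigma_e)$ with $e \in N_{F_1}$ has preimage a single point when $o(e) \notin F_2$, and otherwise has preimage equal to the image in $K_{F_2}^*$ of the partial union $\# \Theta_S$, where $S$ is the connected component of $F_2 \setminus F_1$ containing $o(e)$, together with the additional collapses forced by edges in $N_{F_2}$ incident to $S$. The key connectedness claim is that this image is connected. If $S$ is a single peripheral vertex $w$, then $w$ has some edge in $N_{F_2}$, and the homeomorphism $i_e \colon \Sigma_e \to K_w$ forces the associated sphere collapse to reduce $K_w$ to a single point. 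Otherwise $S$ contains a component vertex $v_0$ with $K_{v_0}$ connected, and a direct induction along $S$ shows the glued union is connected: each new peripheral leaf contributes no new points, because the homeomorphism $i_{e'}$ identifies $K_w$ entirely with a sphere already present in an adjacent component piece, and each new component vertex is glued in along this nonempty shared subspace. Compactness of each fiber is automatic as a closed subset of $K_{F_2}^*$.

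For (i), the strategy is to express $K_F^*$ as an upper semicontinuous quotient of a Peano continuum, so that local connectedness follows via the Hahn--Mazurkiewicz theorem (every continuous image of $[0,1]$ is a locally connected metric continuum). Starting from the disjoint union of the connected, locally connected component spaces $\bigsqcup_{v \in \mathcal{C}_F} K_v$, one adjoins a finite connecting arc for each pair of component pieces sharing a peripheral neighbor in $F$, producing a Peano continuum $Y$. The edge gluings prescribed by $F$ together with the family $\mathcal{A}_F$ of sphere collapses then assemble into an upper semicontinuous decomposition of $Y$. Upper semicontinuity of the sphere-collapse portion uses Proposition \ref{prop:NullUSC} and the null-family hypothesis in Definition \ref{defn:TreeSystem}(4); the finitely many additional identifications coming from edges in $F$ and the adjoined arcs do not disrupt this property. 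The resulting quotient is $K_F^*$, which is thus compact metrizable (by Proposition \ref{prop:Metrizable}), connected as the image of a connected space, and locally connected by Hahn--Mazurkiewicz.

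The main obstacle is Step (i), because the peripheral vertex spaces are only assumed to be nonempty and may be quite pathological; in particular $K_F$ itself need not be locally connected. This is ultimately remedied by two observations: every boundary peripheral vertex $w$ of $F$ (one with an edge in $N_F$) has its space $K_w$ collapsed entirely to a point in $K_F^*$, because $i_e(\Sigma_e) = K_w$ for any such outgoing edge; and every interior peripheral vertex $w$ is identified via the homeomorphisms $i_e$ entirely with subspaces of the adjacent locally connected component spaces, so it contributes no new topology. Carefully organizing the combined edge gluings and sphere collapses as a genuine upper semicontinuous decomposition of a Peano continuum---so that Hahn--Mazurkiewicz can be applied---is the technical heart of the proof, following the framework in \cite{SwiatkowskiTreesOfCompacta}.
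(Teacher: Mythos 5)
Your proposal has the same skeleton as the paper's proof: apply Capel's theorem (Theorem~\ref{thm:Capel}) to the defining inverse system $\{K_F^*, f_{F_1F_2}\}$, verifying that each $K_F^*$ is a compact, metrizable, locally connected space and that the bonding maps are monotone; your fibre computation is a fleshed-out version of the paper's one-line remark that every nontrivial point preimage is a quotient of $K_S$ for a subtree $S$ of $F_2 - F_1$. The divergence is in your step (i). The paper handles it in two lines: since $i_e\colon\Sigma_e\to K_w$ is a homeomorphism for every edge with $t(e)=w\in\mathcal{P}$, each peripheral space occurring in $F$ is identified entirely into an adjacent component space, so (apart from the trivial case $F=\{w\}$, where $K_F^*$ is a point) $K_F$ is the image of the compact, locally connected space $\coprod_{v\in\mathcal{C}_F}K_v$ under a continuous surjection onto a compact metrizable space; such a map is a quotient map, quotients of locally connected spaces are locally connected, and $K_F^*$ is a further quotient. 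You in fact record precisely the absorption facts needed for this (interior peripheral vertices contribute no new points, boundary ones are collapsed), but instead of using them you assert that ``$K_F$ itself need not be locally connected''---which is mistaken outside the trivial case---and then detour through an auxiliary Peano continuum $Y$ and Hahn--Mazurkiewicz. That detour does work (there is a continuous surjection $Y\to K_F^*$, obtained by sending each adjoined arc to the common image of its endpoints, and $K_F^*$ is metrizable by Propositions~\ref{prop:NullUSC} and~\ref{prop:Metrizable}), but it is heavier than necessary, and as written it is vague about how the adjoined arcs are disposed of if one insists that the quotient of $Y$ be literally $K_F^*$; the clean formulation is simply that $K_F^*$ is a Hausdorff continuous image of a Peano continuum.

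One concrete slip in your monotonicity step: when a component $S$ of $F_2\setminus F_1$ is a single peripheral vertex $w$, it is not true in general that $w$ has an edge in $N_{F_2}$. This fails exactly when $w$ has valence one in $T$, since its unique neighbour is then $t(e)\in F_1$ and no edge of $N_{F_2}$ terminates at $w$. In that case nothing collapses the image of $K_w$ in $K_{F_2}^*$, so the fibre over the collapsed-sphere point is a homeomorphic copy of $K_w\cong\Sigma_e$, and its connectedness requires $K_w$ to be connected---something the hypotheses do not grant (peripheral spaces are only assumed nonempty). To be fair, the paper's own proof elides the same point (its assertion that every nontrivial fibre is a connected quotient of some $K_S$ uses connectedness of $K_w$ when $S=\{w\}$), and the issue is invisible in the intended application, where the peripheral spaces are spheres of positive dimension. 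But if you want your argument to cover the statement as literally given, you should either note that valence-one peripheral vertices with disconnected spaces must be treated separately (they are inert: including such a $w$ in $F$ merely refrains from collapsing one sphere), or add connectedness of the $K_w$ to the hypotheses you actually use.
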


\begin{proof}
Since the component vertex spaces are connected and the peripheral vertex spaces are nonempty, $K_F$ and $K_F^*$ are connected for each finite subtree $F$ of $T$.
Recall that any quotient of a locally connected space is locally connected.
Since $K_F$ is obtained by gluing finitely many locally connected spaces, it is locally connected itself.
Since $K_F^*$ is a quotient of $K_F$, it is also locally connected.

In order to apply Theorem~\ref{thm:Capel} to see that $\lim\Theta$ is locally connected, it suffices to check that the bonding maps $K^*_{F_2} \to K^*_{F_1}$ are monotone.
Any nontrivial point preimage is a quotient of $K_{F}$ for some subtree $F$ of $F_2 - F_1$, which must be compact and connected.  Thus $\lim\Theta$ is locally connected.
\end{proof}

\section{Putting together the pieces}
\label{sec:PuttingTogether}

In this section, we complete the proof of Theorem~\ref{thm:MainThm}. Suppose $G$ is a one-ended group acting geometrically on a $\CAT(0)$ space $X$ with isolated flats, and assume that the maximal peripheral splitting $\mathcal{G}$ of $G$ is locally finite.
The results of this section lead up to Proposition~\ref{prop:TreeDecomposition}, which states that $\boundary X$ is homeomorphic to the limit of a tree system of spaces, whose underlying tree is the Bass--Serre tree $T$ for the splitting $\mathcal{G}$ and whose component spaces are the boundaries of the component vertex groups.
The proof of Theorem~\ref{thm:MainThm} will follow by combining Proposition~\ref{prop:TreeDecomposition} with ingredients established in the previous sections.

Let $X$ be the given $\CAT(0)$ space with isolated flats on which $G$ acts geometrically.
To simplify some of the geometric arguments, we will replace $X$ with a quasi-isometric space $X_T$ obtained by Bridson--Haefliger's Equivariant Gluing construction for graphs of groups with $\CAT(0)$ vertex groups and convex edge groups (see Theorems II.11.18 and~II.11.21 of \cite{BH99}). 

The space $X_T$ is constructed as follows.
Recall that the vertices of $T$ have two types: component vertices and peripheral vertices.
We denote the set of component vertices by $\mathcal C$ and the set of peripheral vertices by $\mathcal P$.  Each component vertex group $G_v$ acts geometrically on a $\CAT(0)$ space $C_v$.  Each peripheral vertex group $P_w$, being virtually abelian, acts geometrically on a flat Euclidean space $F_w$.

Each edge $e$ in $T$ is incident to a unique peripheral vertex $w \in \mathcal{P}$.
Let $F_e$ be equal to the flat $F_w$.
Each edge group $P_e$ also comes with a monomorphism $\phi_e \colon P_e \hookrightarrow G_v$.
By the Flat Torus Theorem there is a $\phi_e$--equivariant isometric embedding of the flat $F_e$ in the space $C_v$.

The space $X_T$ is obtained by gluing all components $C_v$ and flats $F_w$ using edge spaces of the form $F_e\times[0,1]$ in the pattern of the tree $T$.
For each edge $e$ incident to vertices $v\in\mathcal C$ and $w\in\mathcal P(T)$, the map from $F_e$ to $F_w$ is the identity while the map from $F_e$ to $C_v$ is the map given by the Flat Torus Theorem. 
Our setup is now a special case of the Equivariant Gluing discussed in \cite[Chapter II.11]{BH99}.  Therefore $G$ acts geometrically on $X_T$.

From this point on, we work in the space $X_T$ instead of $X$.  Since $\CAT(0)$ groups with isolated flats have ``well-defined boundaries,'' the spaces $X$ and $X_T$ have $G$--equivariantly homeomorphic boundaries (see  Theorem~\ref{thm:WellDefinedBoundary}).

Suppose $x_0\in X_T$ is a basepoint in a component space $C_0$.  Let $v_0 \in\mathcal{C}(T)$ be the vertex corresponding to $C_0$.  Let $\xi\in\boundary X_T$.  Using the terminology in \cite{CrokeKleiner00}, we can assign an \emph{itinerary} to $\xi$ at $x_0$.  This consists of a sequence of edges $\{e_i\}$ of $T$ corresponding to the sequence of edge spaces $F_i\times [0,1]$ that $\xi$ enters when based at $x_0$.  We say a ray \emph{enters} an edge space $F_i \times [0,1]$ if the ray reaches a point of the interior $F_i \times (0,1)$.   Observe that $\xi$ has an empty itinerary if and only if $\xi \in \boundary C_0$.
The next result is analogous to Lemma~2 in \cite{CrokeKleiner00}.

\begin{lem} If $\xi \notin \boundary C_0$, then the itinerary to $\xi$ at $x_0$ is the sequence of successive edges of a geodesic segment or geodesic ray beginning at $v_0$ in the tree $T$.
\end{lem}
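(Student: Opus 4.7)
The plan is to exploit the $\CAT(0)$ geometry of $X_T$ together with the product structure of the edge spaces. Because $X_T$ is built by Bridson--Haefliger equivariant gluing with $\CAT(0)$ vertex spaces and convex edge groups, each edge space $F_e \times [0,1]$ is a closed convex subspace of $X_T$, and its interior $F_e \times (0,1)$ is an open convex subspace of $X_T$.

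Let $c\colon [0,\infty) \to X_T$ be the geodesic ray from $x_0$ to $\xi$. For each edge $e$ of $T$, the preimage $c^{-1}\bigl( F_e \times (0,1) \bigr)$ is a subinterval $J_e$ of $[0,\infty)$, possibly empty. The ray enters the edge space of $e$ exactly when $J_e$ is nonempty, and in that case $J_e$ has positive length because $F_e\times(0,1)$ is open in $X_T$. I would then order the entered edges as a sequence $e_1, e_2, \ldots$ (finite or infinite) by the parameter values at which $c$ meets them; the subintervals $J_{e_i}$ are pairwise disjoint because the open edge spaces are pairwise disjoint.

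Next I would show that consecutive edges $e_i$ and $e_{i+1}$ share a vertex in $T$. On the parameter segment between $J_{e_i}$ and $J_{e_{i+1}}$, the ray lies outside every open edge space. By the local topology of $X_T$---each point outside the open edge spaces lies in a unique vertex space---this segment of $c$ is contained in a single vertex space $K$, either a component $C_v$ or a peripheral flat $F_w$. Since $c$ exits the edge space of $e_i$ into $K$ and then enters that of $e_{i+1}$ from $K$, both $e_i$ and $e_{i+1}$ are incident in $T$ to the vertex associated with $K$. Thus the itinerary is a walk in $T$ whose first edge emanates from $v_0$, since $x_0 \in C_0$.

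Finally I would rule out immediate backtracking, i.e., show $e_{i+1} \ne \bar e_i$. Otherwise the ray would leave $F_{e_i} \times [0,1]$ and later re-enter it, contradicting the fact that this closed convex subspace must meet the geodesic $c$ in a connected interval. Since a walk in a tree that never immediately backtracks is automatically a simple path---and in a tree simple paths are geodesic---the itinerary is the sequence of successive edges of a geodesic segment or geodesic ray in $T$ based at $v_0$. The main obstacle is establishing the convexity of the edge spaces inside $X_T$; once this convexity is secured via the Bridson--Haefliger gluing theorem, the rest is a direct application of the principle that geodesics in $\CAT(0)$ spaces meet closed convex subspaces in intervals.
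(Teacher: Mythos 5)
Your overall strategy is the same as the paper's (adjacency of consecutive itinerary edges via the separating structure of $X_T$, plus convexity of edge spaces), but there is a genuine gap at the decisive step: you never prove the \emph{crossing property}, namely that a geodesic which enters an edge space $F_e\times[0,1]$ through one wall must exit through the opposite wall. This is precisely the middle assertion in the paper's proof, and it is needed in two places. First, your passage from ``consecutive edges share a vertex'' to ``the itinerary is a walk in $T$'' is not justified: a priori the ray could enter the edge space of $e_i$ from the vertex space of a vertex $w$ and exit back into that same vertex space, in which case $e_{i-1}$, $e_i$, $e_{i+1}$ could all be incident to $w$. A sequence of edges in which consecutive edges are adjacent and distinct but three successive edges all meet at one vertex (think of three edges of a star) is not the edge sequence of any geodesic in a tree, so adjacency plus your non-backtracking condition $e_{i+1}\ne\bar e_i$ does not suffice. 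Second, your argument for ruling out $e_{i+1}=\bar e_i$ is itself incomplete: if the ray re-entered the interior of the same edge space, the intervening segment could in principle run along the wall $F_e\times\{0\}$ or $F_e\times\{1\}$, which lies \emph{inside} the closed edge space, so the intersection of $c$ with that closed convex set would still be a single interval and convexity alone yields no contradiction.

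Both issues are repaired by the one observation the paper makes explicitly: since the closed edge space is convex and carries the product metric of $F_e$ with $[0,1]$, any subsegment of $c$ with endpoints in it lies in it and is a product geodesic, so its $[0,1]$--coordinate is affine, hence monotone; equivalently, each wall is convex within the edge space, so $c$ cannot touch the same wall twice with an excursion into the interior in between. Therefore a geodesic that reaches the interior of an edge space crosses it from one wall to the other without backtracking. With this in hand your itinerary really is a reduced (non-backtracking) edge path issuing from $v_0$, and your concluding remark that reduced edge paths in trees are geodesics (or, as in the paper, that convexity prevents revisiting an edge space once it is left) completes the proof.
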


\begin{proof}
The separation properties of edge spaces imply that successive edges in the itinerary must be adjacent in $T$, so the itinerary defines a path in $T$.  A geodesic that enters an edge space $F\times [0,1]$ through the flat $F\times \{0\}$ must exit through the flat $F \times \{1\}$ without backtracking.
Edge spaces are convex so a geodesic cannot revisit any edge space which it has left.
Therefore the corresponding path in $T$ is a geodesic.
\end{proof}

\begin{lem}\label{lem:idealpts}
Let $x_0$ be a basepoint contained in a component $C_{v_0}$.
   \begin{enumerate}
   \item
   \label{item:idealpts-surjective}
   If a geodesic ray based at $x_0$ has an infinite itinerary, then that itinerary is a geodesic ray of $T$ based at $v_0$.
   \item
   \label{item:idealpts-WellDef}
   Every ray of $T$ based at $v_0$ is the itinerary of a geodesic ray based at $x_0$.
   \item
   \label{item:idealpts-injective}
   If $c,c'$ are geodesic rays based at $x_0$ in $X_T$ that have the same infinite itinerary at $x_0$, then $c=c'$.
   \end{enumerate}
\end{lem}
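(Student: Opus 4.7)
The plan is to prove the three parts in order, using the separation properties of edge spaces established in the preceding unnumbered lemma, a standard Arzel\`a--Ascoli argument, and a convexity-plus-uniqueness argument. For part~(\ref{item:idealpts-surjective}), I would apply the preceding lemma to each finite truncation $c|_{[0,T]}$ of the ray $c$: its itinerary is a finite geodesic path of $T$ based at $v_0$, and these paths exhaust the infinite itinerary of $c$ as $T \to \infty$, so the whole itinerary is a geodesic ray of $T$ based at $v_0$.

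For part~(\ref{item:idealpts-WellDef}) I would use an Arzel\`a--Ascoli argument. Given a geodesic ray $\gamma$ of $T$ with consecutive vertices $v_0, v_1, v_2, \ldots$ and edges $e_1, e_2, \ldots$, choose points $y_n$ in the component space $C_{v_n}$ (or in a peripheral vertex space along $\gamma$) with $d(x_0, y_n) \to \infty$. For each $k \le n$ the edge space $E_{e_k}$ separates $x_0$ from $y_n$ in the tree-of-spaces structure, so the $\CAT(0)$ geodesic segment $[x_0, y_n]$ must cross $E_{e_1}, \ldots, E_{e_n}$ in order; thus its itinerary begins with $(e_1,\ldots,e_n)$. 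By properness of $X_T$, a subsequence of $[x_0, y_n]$ converges uniformly on compacta to a geodesic ray $c$ based at $x_0$. Since each edge space is closed and the approximating segments visit them in the prescribed order without backtracking, the itinerary of $c$ is exactly $\gamma$.

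Part~(\ref{item:idealpts-injective}) is the main obstacle. Suppose $c \ne c'$ are geodesic rays from $x_0$ with common infinite itinerary $\gamma$. The strategy is to show that $c$ and $c'$ converge to the same boundary point $\xi \in \boundary X_T$; together with the uniqueness of $\CAT(0)$ geodesic rays from $x_0$ representing a given boundary point, this forces $c = c'$. For each $n$, the edge space $E_{e_n}$ separates $X_T$ into two closed convex halfspaces; let $H_n$ denote the one not containing $x_0$, so $H_1 \supseteq H_2 \supseteq \cdots$. Both $c$ and $c'$ enter every $H_n$ and remain there thereafter, so $c(\infty), c'(\infty)$ both lie in the compact nested intersection $\bigcap_n \bigl( \bar H_n \cap \boundary X_T \bigr)$.

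The hard part will be to show that this intersection is a single point. My plan is to exploit the product structure of each $E_{e_n} = F_{e_n} \times [0, \ell_{e_n}]$ together with the Flat Torus Theorem: the ``visual cross-section'' of $H_n$ in $\boundary X_T$ is controlled by the boundary $\boundary F_{e_n}$ plus the visual boundary contribution from the far component, and the shared tree itinerary $\gamma$ pins down the single asymptotic direction common to both $c$ and $c'$. Concretely, I expect to use the cocompactness of the $G$-action on $X_T$ together with the tree-projection $\pi\colon X_T \to T$ to argue that any two boundary points in $\bigcap_n \bar H_n \cap \boundary X_T$ project to the common endpoint $\gamma(\infty) \in \boundary T$ and therefore, by the rigidity of the BH gluing along flats of bounded diameter per $G$-orbit, must coincide. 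Once this is established, both $c(\infty) = c'(\infty) = \xi$ and uniqueness of the $\CAT(0)$ ray from $x_0$ to $\xi$ yields $c = c'$.
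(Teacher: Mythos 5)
Parts (1) and (2) of your proposal are fine and are essentially the paper's argument: the truncation/separation observation for (1), and for (2) the segments to points far along the itinerary must cross the edge spaces $F_{e_1}\times[0,1],\dots,F_{e_n}\times[0,1]$ in order, and a limit ray works. (One small point worth a sentence in (2): you must argue the limit ray actually \emph{enters} the interior of every edge space, which follows because each edge space separates $x_0$ from the tail of the approximating segments.)

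Part (3), however, has a genuine gap. You correctly identify the hard step -- showing that $\bigcap_n \bigl( \bar H_n \cap \boundary X_T \bigr)$ is a single point -- but you do not prove it; the appeal to ``the product structure,'' ``cocompactness,'' a ``projection'' of boundary points to $\gamma(\infty)\in\boundary T$, and ``rigidity of the BH gluing'' is not an argument. Indeed there is no well-defined map from $\boundary X_T$ to $\boundary T$ (boundary points of flats and of components have finite itineraries), and the assertion that two boundary points with the same limiting tree data must coincide \emph{is} the injectivity statement you are trying to prove, so the sketch is close to circular. Moreover, nothing in your part (3) uses isolated flats, yet that hypothesis is exactly what makes the claim true: without it (e.g.\ in Croke--Kleiner-type gluings) distinct rays can share an infinite sequence of separating walls. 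The paper avoids boundary points entirely and argues inside $X_T$: by Theorem~\ref{thm:facts} there is a uniform $\kappa$ so that for each flat $F_{e_i}$ crossed by the common itinerary, with $q_i$ the nearest point of $F_{e_i}$ to $x_0$, the set $[x_0,q_i]\cup F_{e_i}$ is $\kappa$--quasiconvex; hence both $c$ and $c'$ pass within $3\kappa$ of $q_i$, so they come $6\kappa$--close at points arbitrarily far from $x_0$, and convexity of $t\mapsto d\bigl(c(t),c'(t)\bigr)$ (which vanishes at $t=0$) forces $c=c'$. To repair your version you would need precisely such a uniform closeness estimate coming from isolated flats; once you have it, the detour through nested halfspaces at infinity and uniqueness of rays to a boundary point becomes unnecessary.
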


\begin{proof}
In order to show (\ref{item:idealpts-surjective}),
it suffices to verify that the infinite itinerary is based at $v_0$, which is clear since the first flat the ray enters must be adjacent to the component $C_{v_0}$.

Any geodesic ray $\{e_i\}_{i=1}^\infty$ in $T$ based at $v_0$ corresponds to a sequence of edge spaces $F_i \times [0,1]$.
Any geodesic segment $c_i$ from $x_0$ to $F_i\times\{1/2\}$ must enter the edge space $F_j \times [0,1]$ for all $1\le j \le i$ since that edge space separates $x_0$ from $F_i \times \{1/2\}$.
After passing to a subsequence the geodesics $c_i$ converge to a ray $c$ based at $x_0$ which enters all edge spaces $F_i \times [0,1]$ for $i=1,2,3,\dots$ since the edge spaces separate $X_T$.  It follows that the given geodesic in $T$ is the itinerary of $c$ at $x_0$, establishing (\ref{item:idealpts-WellDef}).

Since the itinerary is infinite, there is a sequence of edge spaces $F_i \times [0,1]$ that $c,c'$ both enter. Let $F_i$ denote the flat $F_i \times \{1/2\}$ in the $i$th edge space. 
Using Theorem~\ref{thm:facts}, there exists a constant $\kappa$ such that for each $i$, if $q_i$ is the closest point of $F_i$ to $x_0$ then $[x_0,q_i]\cup F_i$ is $\kappa$--quasiconvex in $X$.  Thus both $c$ and $c'$ come within a distance $3\kappa$ of $q_i$.   In particular, $c,c'$ come $6\kappa$--close to each other arbitrarily far from $x_0$.  By convexity of the distance function in $X_T$, this implies $c=c'$, which proves (\ref{item:idealpts-injective}). 
\end{proof}

\begin{lem}\label{lem:dichotomy}If $\xi\in\boundary X_T$, then exactly one of the following holds:
\begin{enumerate}
\item $\xi\in\boundary C$ for some component $C$.  This includes the case that $\xi\in\boundary F$ for some flat.
\item $\xi$ has an infinite itinerary with respect to any $x_0 \in X_T$
and is not contained in $\boundary C$ for any component $C$ in $X_T$. 
\end{enumerate}
\end{lem}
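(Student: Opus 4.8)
The plan is to reduce the lemma to a single implication: \emph{if the itinerary of $\xi$ at some basepoint $x_0$ is finite, then $\xi\in\boundary C$ for some vertex space $C$ of $X_T$} (where by a vertex space I mean one of the pieces $C_v$ or $F_w$). Granting this, the dichotomy is immediate. Conditions (1) and (2) are mutually exclusive, since (2) explicitly demands that $\xi$ lie in no $\boundary C$, which is exactly the negation of (1). For exhaustiveness, suppose $\xi$ fails (1), i.e.\ $\xi\notin\boundary C$ for every vertex space $C$. Then by the contrapositive of the implication, for \emph{every} basepoint $x_0$ the itinerary of $\xi$ at $x_0$ is infinite, so $\xi$ satisfies (2). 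Thus exactly one of (1),(2) holds, and in particular the infinite-itinerary condition in (2) is automatically basepoint independent.

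It remains to prove the implication. Let $c$ be the unique geodesic ray based at $x_0\in C_0$ with $c(\infty)=\xi$, and suppose its itinerary $e_1,\dots,e_n$ is finite. If $n=0$ this is the empty-itinerary case, where it was already observed that $\xi\in\boundary C_0$. Suppose $n\ge 1$. By the lemma preceding Lemma~\ref{lem:idealpts}, the itinerary $e_1,\dots,e_n$ is a geodesic segment in $T$ from $v_0$ to a vertex $v_n$; in particular the $e_i$ are pairwise distinct and $e_n$ is the only edge of this segment incident to $v_n$. Since a geodesic crosses each edge space $F_i\times[0,1]$ from one face to the other without backtracking and never returns to an edge space it has left, after $c$ crosses $F_n\times[0,1]$ it emerges into the vertex space $C$ attached at $v_n$, and from then on it enters no open edge space at all: it cannot re-enter $F_1\times[0,1],\dots,F_n\times[0,1]$, and entering any further edge space would enlarge the itinerary. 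Now $X_T$ minus the union of the open edge spaces $F_e\times(0,1)$ is precisely the \emph{disjoint} union of the vertex spaces, so any connected subset of $X_T$ meeting no open edge space lies in a single vertex space; hence a tail of $c$ is contained in $C$. As the vertex spaces are convex subsets of $X_T$ by the Equivariant Gluing construction, this tail is a geodesic ray of $C$, so $\xi\in\boundary C$, as required.

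The main obstacle is the bookkeeping at the edge spaces: one must be certain that a finite itinerary genuinely forces $c$ to terminate inside one vertex space, rather than allowing $c$ to oscillate among several components without ever adding a new edge to the itinerary. This is handled entirely by the ``no backtracking, no revisiting'' behaviour of $\CAT(0)$ geodesics through the convex product edge spaces $F_i\times[0,1]$ --- the same behaviour already exploited in the proofs of the lemma preceding Lemma~\ref{lem:idealpts} and of Lemma~\ref{lem:idealpts} itself. Everything else (identifying the tail of $c$ with a geodesic ray of the convex subspace $C$, hence $\xi$ with a point of $\boundary C$) is routine.
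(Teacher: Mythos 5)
Your proof is correct and follows essentially the same route as the paper: analyze the itinerary, show that a finite itinerary forces the ray to eventually remain in a single (convex) vertex space so that $\xi$ lies in that space's boundary, and deduce both the dichotomy and the basepoint-independence of having an infinite itinerary. You simply spell out the "no backtracking/no revisiting" bookkeeping and the formal reduction that the paper's shorter proof treats as immediate.
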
 

\begin{proof}  Let $c$ be a geodesic ray based at $x_0$ representing $\xi$.
Note that $\xi$ has a finite itinerary if and only if $c$ enters only finitely many edge spaces,
which holds if and only if $c$ eventually remains in a component $C_v$ for some $v \in \mathcal{C}$.  That component is $C_{v_0}$, if and only if the itinerary of $\xi$ at $x_0$ is empty as was previously noted. 
Otherwise, the itinerary of $\xi$ at $x_0$ is the geodesic segment $[v_0,v]$ in $T$, and $\xi \in \boundary C_v$.

The property of having an infinite itinerary does not depend on the choice of basepoint $x_0 \in X_T$.  Suppose $\xi$ has an infinite itinerary
and $C_v$ is any component.  Then $\xi \notin \boundary C_v$.
Indeed this becomes obvious if we choose a basepoint $x_0$ from the convex set $C_v$.
\end{proof}

\begin{lem}\label{lem:treelike}
Let $C_v$ and $C_{v'}$ be two distinct components of $X_T$.
Then one of the following holds:
\begin{enumerate}
\item $\boundary C_v\cap\boundary C_{v'}=\emptyset$.
\item $\boundary C_v\cap\boundary C_{v'}=\boundary F_w$ for some $w \in \mathcal{P}(T)$ adjacent to both $v$ and $v'$.
In this case, there is a copy of $F_w\times[-1,1]$ embedded in $X_T$ with $F_w\times\{-1\}\subset C_v$ and $F_w\times\{1\}\subset C_{v'}$.
\end{enumerate}
\end{lem}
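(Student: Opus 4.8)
The plan is to exploit two structural features of the tree of spaces $X_T$ supplied by the Bridson--Haefliger gluing theory \cite{BH99}. First, each component $C_v$ and each peripheral flat $F_w$ is a convex subspace of the $\CAT(0)$ space $X_T$; in particular $\boundary C_v$ embeds in $\boundary X_T$, and for any basepoint $x_0\in C_v$ and any $\xi\in\boundary C_v$ the geodesic ray from $x_0$ to $\xi$ stays inside $C_v$. Second, since the underlying graph is the tree $T$, the flat $F_w$ separates $X_T$, and two components $C_v$ and $C_{v'}$ lie in different components of $X_T - F_w$ exactly when $w$ lies on the $T$--geodesic $[v,v']$.

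The technical core is the following claim: if $c$ and $c'$ are asymptotic geodesic rays of $X_T$ lying in different components of $X_T - F_w$, then $c(\infty)=c'(\infty)\in\boundary F_w$. To see this, observe that $t\mapsto d\bigl(c(t),c'(t)\bigr)$ is bounded, say by $K$ (it is convex and bounded on $[0,\infty)$, hence non-increasing). For each $t$ the segment $[c(t),c'(t)]$ joins two different components of $X_T - F_w$, and so must meet the separating set $F_w$ at some point $m_t$; since $m_t$ lies on this segment, $d\bigl(c(t),F_w\bigr)\le d(c(t),m_t)\le K$. Thus $c$ is contained in $\nbd{F_w}{K}$, and since the visual boundary of a bounded neighbourhood of a convex set coincides with that of the set itself, $c(\infty)\in\boundary F_w$.

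Now fix distinct component vertices $v$ and $v'$; since $T$ is bipartite, the $T$--geodesic $[v,v']$ has even length, either $2$ or at least $4$. Suppose first the length is $2$, say $v-w-v'$, with $w$ the unique common neighbour. The copy of $F_w$ sitting inside $C_v$ as an end of the edge space of the edge $v$--$w$ lies at finite Hausdorff distance from $F_w$, hence has the same boundary sphere; being convex in the convex set $C_v$, this forces $\boundary F_w\subseteq\boundary C_v$, and symmetrically $\boundary F_w\subseteq\boundary C_{v'}$. Conversely, given $\xi\in\boundary C_v\cap\boundary C_{v'}$, choose geodesic rays to $\xi$ based in $C_v$ and in $C_{v'}$; these lie in $C_v$ and in $C_{v'}$, hence in different components of $X_T - F_w$, so the claim gives $\xi\in\boundary F_w$. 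Therefore $\boundary C_v\cap\boundary C_{v'}=\boundary F_w$, and the union of the edge space of $v$--$w$ with the edge space of $v'$--$w$, which meet along $F_w$, is an isometrically embedded copy of $F_w\times[-1,1]$ with $F_w\times\{-1\}\subset C_v$ and $F_w\times\{1\}\subset C_{v'}$. This is alternative~(2).

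Suppose instead $[v,v']$ has length at least $4$, and let $w_1$ and $w_2$ be the first and last peripheral vertices on it; since a component vertex lies strictly between $v$ and $v'$, these are distinct. Given $\xi\in\boundary C_v\cap\boundary C_{v'}$, each of $w_1$ and $w_2$ separates $v$ from $v'$, so applying the claim to $F_{w_1}$ and to $F_{w_2}$ (placing the two rays in the appropriate components as above) yields $\xi\in\boundary F_{w_1}\cap\boundary F_{w_2}$. But $F_{w_1}$ and $F_{w_2}$ are flats stabilised by distinct maximal virtually abelian subgroups of rank at least two, so their boundary spheres are the boundaries of distinct members of $\mathcal F$ by Theorem~\ref{thm:PeriodicFlats}, and those are pairwise disjoint by the isolated flats hypothesis. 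This contradiction gives $\boundary C_v\cap\boundary C_{v'}=\emptyset$, which is alternative~(1). The step I expect to require the most care is the claim in the second paragraph---in particular the fact that $\boundary\nbd{F_w}{K}=\boundary F_w$ for the convex flat $F_w$---together with the bookkeeping needed to identify each peripheral flat $F_w$ of $X_T$ with (a flat having the same boundary sphere as) a member of the isolated-flats family $\mathcal F$ of $X_T$; everything else is a routine consequence of the convexity and separation properties of $X_T$.
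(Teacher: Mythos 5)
Your proof is correct and takes essentially the same approach as the paper: separation of $C_v$ from $C_{v'}$ by the peripheral flats of $X_T$, combined with the isolated flats condition (a common ideal point would force a ray into bounded neighborhoods of two distinct flats of $\mathcal{F}$). You package the separation step as a clean stand-alone claim and supply a bit more detail (including the easy inclusion $\boundary F_w\subseteq\boundary C_v\cap\boundary C_{v'}$), but the underlying argument matches the paper's.
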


\begin{proof}
Suppose $\xi \in \boundary C_{v} \cap \boundary C_{v'}$ for some $v \ne v'$.  Then $d_T (v,v')=2$.  Indeed if this distance were greater than two then $C_v$ and $C_{v'}$ would be separated by a pair of distinct flats $F_{e} \ne F_{e'}$ from the family of isolated flats $\mathcal{F}$. The existence of a ray asymptotic to both $C_v$ and $C_v'$ would then contradict isolated flats.

Let $w \in \mathcal{P}$ be the unique peripheral vertex adjacent to both $v$ and $v'$.
Note that the two edge spaces between $v$ and $v'$ are each isometric to $F_w \times [0,1]$, so their union is isometric to $F_w \times [-1,1]$.
Observe that this copy of $F_w \times [-1,1]$ separates $C_v$ from $C_{v'}$ in $X_T$.
Thus for any constant $r$, the intersection $N_r(C_v) \cap N_r(C_{v'})$ lies in a finite tubular neighborhood of $F_w$.
It follows that $\xi\in \boundary F_w$.
\end{proof}

The results above hold for any one-ended $\CAT(0)$ group with isolated flats.  For the rest of the section we are concerned only with the special case in which the maximal peripheral splitting is locally finite.

It is our goal to show that, in this case, $\boundary X_T$ is homeomorphic to the limit of the tree system $\Theta$ defined in the following construction.

\begin{construction}[The tree system of the peripheral splitting]
\label{const:TheTreeSystem}
Recall that the one-ended group $G$ has a maximal peripheral splitting $\mathcal{G}$.  By hypothesis, this splitting is assumed to be locally finite, which means that each peripheral vertex $w \in \mathcal{P}$ of the Bass--Serre tree $T$ has finite valence.
To each vertex $v \in \mathcal{C}$ we associate the subspace $K_v = \boundary C_v$, and to each $w \in \mathcal{P}$ we associate the subspace $K_w = \boundary F_w$.
To each oriented edge $e \in \mathcal{E}$ we associate the subspace $\Sigma_e=\boundary F_e$.  Since $\Sigma_e = \Sigma_{\bar{e}}$, we set $\phi_e$ to be the identity map.
Since $\Sigma_e \subseteq K_{t(e)}$ we set $i_e$ to be the inclusion.
For each $v \in \mathcal{C}$, the family of closed subspaces $\bigset{i_e(\Sigma_e)}{t(e)=v}$ is pairwise disjoint by the definition of isolated flats. Proposition~\ref{prop:nullcondition} will imply that this family is null.
Thus the data above define a tree system $\Theta$ whose underlying tree is $T$.
\end{construction}

The following proposition follows easily from the conclusions of Lemmas \ref{lem:idealpts}, \ref{lem:dichotomy}, and~\ref{lem:treelike} about itineraries and their relation to the structure of $\boundary X_T$.
As in Section~\ref{sec:TreeLimits}, given a tree system $\Theta$, we let $\#\Theta$ denote the quotient space obtained by gluing the vertex spaces of $\Theta$ along edge spaces via the maps $i_e$.

\begin{prop}
\label{prop:rhoProperties}
There is a map $\rho\colon \#\Theta \cup \boundary T \to \boundary X_T$ with the following properties:
  \begin{enumerate}
  \item
  \label{item:rhoBijection}
  $\rho$ is a bijection.
  \item $\rho$ is continuous on $\#\Theta$.
  \item For each finite subtree $F$ of $T$, the map $\rho$ restricted to the partial union $K_F$ is a topological embedding.  In particular, $\rho$ is an embedding when restricted to any vertex space $K_v$. \qed
  \end{enumerate}
\end{prop}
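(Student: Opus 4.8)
The plan is to construct $\rho$ from the natural inclusions of the convex pieces of $X_T$ into it, and then read off the three properties from Lemmas~\ref{lem:idealpts}, \ref{lem:dichotomy}, and~\ref{lem:treelike}.  First I would define $\rho$ on $\#\Theta$.  Each component $C_v$ ($v\in\mathcal{C}$) and each flat $F_w$ ($w\in\mathcal{P}$) is a convex subspace of $X_T$ --- this follows, as in the itinerary lemmas, from the separation properties of the edge spaces, and for the components one may also invoke Theorem~\ref{thm:ComponentCAT0} --- so the vertex spaces $K_v=\boundary C_v$ and $K_w=\boundary F_w$ sit in $\boundary X_T$ as topological subspaces.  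Moreover, for an oriented edge $e$ the embedding $i_e\colon\Sigma_e=\boundary F_e\to K_{t(e)}$ is precisely the inclusion of $\boundary F_e$ induced by the isometric embedding of $F_e$ used in the Equivariant Gluing.  Hence the generating relation $i_e(x)\sim i_{\bar e}\phi_e(x)$ merely identifies two names for one and the same point of $\boundary F_e\subseteq\boundary X_T$, so the tautological map $\coprod_v K_v\to\boundary X_T$ descends to a map on the quotient $\#\Theta$.  On $\boundary T$ I would fix the basepoint $x_0\in C_{v_0}$ and invoke Lemma~\ref{lem:idealpts}(\ref{item:idealpts-WellDef})--(\ref{item:idealpts-injective}): every ray of $T$ issuing from $v_0$ is the itinerary of a unique geodesic ray based at $x_0$, and I send the corresponding end of $T$ to the ideal endpoint of that ray.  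Together these give $\rho\colon\#\Theta\cup\boundary T\to\boundary X_T$.

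To prove bijectivity I would proceed as follows.  On a single vertex space, $\rho$ is just the inclusion $\boundary C_v\hookrightarrow\boundary X_T$ (or $\boundary F_w\hookrightarrow\boundary X_T$), hence injective.  If a point lies in the $\rho$--images of two distinct vertex spaces, their boundaries meet inside $\boundary X_T$, and I would argue --- using Lemma~\ref{lem:treelike} for two distinct components, a parallel argument for a component and an adjacent peripheral flat, and the isolated flats condition for two distinct flats --- that the common point must lie in a shared edge sphere $\boundary F_e$, hence is identified in $\#\Theta$; this gives injectivity of $\rho$ on $\#\Theta$.  A point of $\#\Theta$ lies in some $\boundary C_v$ and so has a finite itinerary by part~(1) of Lemma~\ref{lem:dichotomy}, while a point in the image of $\boundary T$ has an infinite itinerary, so these two images are disjoint; and $\rho$ is injective on $\boundary T$ by Lemma~\ref{lem:idealpts}(\ref{item:idealpts-injective}).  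Surjectivity is exactly the dichotomy of Lemma~\ref{lem:dichotomy}: every $\xi\in\boundary X_T$ either lies in $\boundary C$ for some component, hence in the image of $\#\Theta$, or has an infinite itinerary, which by Lemma~\ref{lem:idealpts}(\ref{item:idealpts-surjective}) is a ray of $T$, hence in the image of $\boundary T$.

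For continuity and the embedding statements I would note that each inclusion $K_v\hookrightarrow\boundary X_T$ is continuous, so the map out of the coproduct $\coprod_v K_v$ is continuous, and therefore so is the induced map $\rho|_{\#\Theta}$ on the quotient $\#\Theta$.  For a finite subtree $F$, the partial union $K_F=\#\Theta_F$ is a finite union of compacta, hence compact, and the composite $K_F\to\#\Theta\to\boundary X_T$ (the first map canonical, the second $\rho$) is a continuous injection --- injectivity by the previous paragraph --- from a compact space into the compact metrizable, in particular Hausdorff, space $\boundary X_T$; it is therefore a topological embedding.  Taking $F$ to be a single vertex gives the embedding of each $K_v$.

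I expect the main obstacle to be the bijectivity step, and within it the assertion that the combinatorial gluing relation of $\Theta$ records \emph{exactly} --- neither too many nor too few --- the coincidences of ideal points among the convex pieces comprising $X_T$.  This is where Lemma~\ref{lem:dichotomy} and the tree-like intersection pattern of Lemma~\ref{lem:treelike}, both ultimately resting on the isolated flats hypothesis, do the real work; the remaining ingredients (that maps out of quotients and coproducts are checked factorwise, and that a continuous bijection from a compact space onto a Hausdorff space is a homeomorphism) are routine.
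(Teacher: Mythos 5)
Your proof is correct and is exactly the kind of elaboration the paper has in mind when it says the proposition ``follows easily'' from Lemmas~\ref{lem:idealpts}, \ref{lem:dichotomy}, and~\ref{lem:treelike}: you define $\rho$ on $\#\Theta$ via the inclusions $\boundary C_v, \boundary F_w \hookrightarrow \boundary X_T$ and on $\boundary T$ via itineraries, and you read off bijectivity and the embedding property from those three lemmas together with the closed-map lemma. Two spots are worth tightening: (i) for injectivity on $\#\Theta$ you should also dispose of the case of a component vertex space and a \emph{non}-adjacent peripheral vertex space (they are disjoint in $\boundary X_T$; this follows by combining Lemma~\ref{lem:treelike} with the isolated-flats disjointness of distinct boundary spheres), and (ii) the embedding argument silently uses that the canonical map $K_F \to \#\Theta$ is injective, i.e.\ that the equivalence relation of $\Theta$ restricted to vertices of $F$ agrees with that of $\Theta_F$; this holds because $F$ is a subtree, hence contains the $T$--geodesic between any two of its vertices, and any chain of gluing relations identifying a point of $K_v$ with one of $K_{v'}$ can be taken to traverse only edges of that geodesic. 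Both are routine, but deserve a sentence each.
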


If $S$ is a subtree of $T$, we write $\Psi(S) = \rho \bigl( \#\Theta_S \cup \boundary S \bigr)$.  If $T_e$ is a branch of $T$, the set $\Psi(T_e)$ is a \emph{branch} of $\boundary X_T$.

\begin{prop}
\label{prop:nullcondition}
Choose a basepoint $v \in \mathcal{V}$.  The family of all branches $\Psi(T_e)$ of $\boundary X_T$ such that $e$ points away from $v$ is a null family.
\end{prop}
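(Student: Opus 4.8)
The plan is to verify the null condition through its cone-topology reformulation in Remark~\ref{rem:NullConeTopology}. Fix a basepoint $x_0 \in X_T$ lying over the chosen vertex $v$ (so $x_0 \in C_v$ if $v$ is a component vertex, and $x_0 \in F_v$ if $v$ is peripheral). By Remark~\ref{rem:NullConeTopology} it suffices to produce a single constant $D>0$ so that for every $r<\infty$, all but finitely many branches $\Psi(T_e)$ with $e$ pointing away from $v$ are contained in a basic set of the form $U(\cdot,r,D)$. For each such edge $e$, let $F_e^{\circ}=F_e\times\{1/2\}$ be the midflat of the edge space $F_e\times[0,1]$, which separates $X_T$, and let $q_e$ be the point of $F_e^{\circ}$ nearest to $x_0$. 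The key geometric claim is that there is a uniform $D$, depending only on the constant $\kappa$ of Theorem~\ref{thm:facts} (and on the uniformly bounded separation between $F_e^{\circ}$ and the flat $F_{t(e)}$, which is at most $1/2$), such that whenever $d(x_0,q_e)>r+3\kappa$ one has $\Psi(T_e)\subseteq U(c_e,r,D)$ for any geodesic ray $c_e$ from $x_0$ that crosses $F_e^{\circ}$.

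To prove the claim I would first use Proposition~\ref{prop:rhoProperties} together with the itinerary analysis of Lemmas~\ref{lem:idealpts}--\ref{lem:treelike} to observe that every $\xi\in\Psi(T_e)$ is of one of two types: either $\xi$ lies on the seam sphere $\boundary F_e=\boundary F_{t(e)}$, or $\xi$ is represented by a geodesic ray from $x_0$ whose itinerary passes through the edge $e$. Indeed, a point of $\boundary C_u$ with $u\in T_e$ is represented by a ray whose itinerary is the tree geodesic $[v,u]$, which must run through $e$, and the only exceptions are the points lying in $\boundary C_v\cap\boundary C_u$, which by Lemma~\ref{lem:treelike} form the sphere $\boundary F_{t(e)}$; similarly the ideal points of $\boundary T_e$ have every representing ray crossing $e$. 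A ray of the second type enters, hence crosses, the edge space, so it meets $F_e^{\circ}$; since $[x_0,q_e]\cup F_e^{\circ}$ is $\kappa$--quasiconvex by Theorem~\ref{thm:facts}, that ray passes within $\kappa$ of $q_e$, and as $d(x_0,q_e)>r+3\kappa$ this occurs at a parameter exceeding $r$. A $\CAT(0)$ convexity estimate, exactly of the kind used in Section~\ref{sec:RankOneLocalCon} to show that $\set{\boundary F}{F\in\mathcal F}$ is a null family, then forces any two such rays to remain uniformly close up to parameter $r$, and passing to limits handles $\boundary T_e$; thus all points of the second type lie in $U(c_e,r,D')$ for a uniform $D'$. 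For the seam sphere, the flat $F_{t(e)}$ lies within bounded Hausdorff distance of $F_e^{\circ}$, so the same argument that proves $\set{\boundary F}{F\in\mathcal F}$ null places $\boundary F_{t(e)}$ inside a basic set $U(\cdot,r,7\kappa)$ centred on a ray through a point within a uniform distance of $q_e$; enlarging $D$ absorbs this into $U(c_e,r,D)$.

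It then remains to note that for each fixed $r$ only finitely many edges $e$ pointing away from $v$ satisfy $d(x_0,q_e)\le r+3\kappa$: such a $q_e$ lies in $\ball{x_0}{r+3\kappa}$ and in the edge space of $e$, and the edge spaces of $X_T$ form a locally finite family, since the $G$--equivariant map $X_T\to T$ of Lemma~\ref{lem:MapToTree} is continuous for the CW topology on $T$ and hence carries a compact set onto a subset meeting only finitely many open edges. Combining this with the key claim and Remark~\ref{rem:NullConeTopology} gives that the family of branches is null. I expect the main obstacle to be the bookkeeping around the seam spheres $\boundary F_e$: these are precisely the points of $\Psi(T_e)$ whose geodesic rays from $x_0$ need not approach the midflat $F_e^{\circ}$ at all, so they have to be controlled separately by appealing to the already-established null property of the flat boundaries; a secondary technical point is confirming the uniform bound on $d(q_e,F_{t(e)})$ — equivalently, the uniform transverse ``width'' of the edge spaces — which is what allows a single constant $D$ to serve for all $e$.
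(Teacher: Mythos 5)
Your proposal is correct and is essentially the paper's argument: the paper packages your ``key geometric claim'' as the standalone Lemma~\ref{lem:smallbranch} (with $D=7\kappa$), applies it in the contrapositive, and finishes with the same local-finiteness observation about the flats $\{F_e\}$. The one small local difference is your treatment of the seam sphere $\boundary F_e$ via the already-established nullity of flat boundaries, whereas the paper handles rays asymptotic to $F_e$ directly as limits of rays crossing $F_e$ (absorbing an extra $\kappa$)---both routes work, and the obstacle you flag at the end is exactly what the middle portion of Lemma~\ref{lem:smallbranch}'s proof disposes of.
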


The proof of the previous proposition uses the following two lemmas.

\begin{lem}
\label{lem:FiniteItineraries}
Let $x_0$ be a basepoint contained in a component $C_{v_0}$.
   \begin{enumerate}
   \item
   \label{item:FI-comp}
   If $\xi \in \boundary C_v$ for some $v \in \mathcal{C}$ but not in $\boundary F_w$ for any $w$, then the itinerary of $\xi$ at $x_0$ is the geodesic segment $[v_0,v]$.
   \item
   \label{item:FI-flat}
   If $\xi \in \boundary F_w$ for some $w\in \mathcal P$, then the itinerary of $\xi$ at $x_0$ is the geodesic segment $[v_0,v]$ of $T$, where $v$ is the vertex adjacent to $w$ that is closest to $v_0$.
   \end{enumerate}
\end{lem}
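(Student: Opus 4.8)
The plan is to reduce both statements to a single geometric question: for the geodesic ray $c$ from $x_0$ to $\xi$, which component does $c$ eventually remain in? Once that is settled, everything else is bookkeeping with the lemmas on itineraries already established. So I would first isolate the following auxiliary claim: \emph{if $c$ eventually remains in a component $C_u$, then the itinerary of $\xi$ at $x_0$ is the geodesic segment $[v_0,u]$ of $T$.} To prove it, recall from the lemmas preceding Lemma~\ref{lem:idealpts} that $c$ passes straight through any edge space it enters, without backtracking, and never re-enters an edge space it has left. Consequently $c$ crosses the mid-flat $F_i\times\{1/2\}$ of an edge space precisely for those edges whose mid-flat separates $x_0$ from the eventual tail of $c$ --- namely the edges lying on $[v_0,u]$ --- so the itinerary is exactly the edge sequence of $[v_0,u]$.

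For part~(1), assume $\xi\in\boundary C_v$ and $\xi\notin\boundary F_w$ for every $w\in\mathcal{P}$. By Lemma~\ref{lem:dichotomy}, $\xi$ has a finite itinerary at $x_0$, so $c$ eventually remains in some component $C_{v'}$ with $\xi\in\boundary C_{v'}$. If $v'\ne v$, then Lemma~\ref{lem:treelike} would force $\xi\in\boundary F_w$ for some $w$, contrary to hypothesis; hence $v'=v$, and the auxiliary claim gives that the itinerary of $\xi$ at $x_0$ is $[v_0,v]$.

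For part~(2), assume $\xi\in\boundary F_w$, and let $v$ be the component vertex adjacent to $w$ that is nearest $v_0$ (the neighbour of $w$ on $[v_0,w]$, or $v_0$ itself when $v_0$ is adjacent to $w$). In the construction of $X_T$, the edge strip joining $C_v$ to $F_w$ carries an isometric copy of $F_w$ inside $C_v$ (the Flat Torus embedding) that is parallel to $F_w$, so $\boundary F_w\subseteq\boundary C_v$; in particular $\xi\in\boundary C_v$. By Lemma~\ref{lem:dichotomy} the ray $c$ eventually remains in some component $C_u$ with $\xi\in\boundary C_u$, and by Lemma~\ref{lem:treelike} together with the fact that distinct isolated flats have disjoint boundary spheres, every component whose boundary contains $\xi$ must be adjacent to $w$. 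The remaining point is to show $u=v$. I would argue via separation: let $M$ be the mid-flat of the edge strip between $C_v$ and $F_w$; being an isometrically embedded flat, $M$ is convex in $X_T$, and being parallel to $F_w$ it satisfies $\xi\in\boundary M$. The flat $M$ separates $X_T$; let $H$ be the side containing $C_v$, so that $x_0\in C_{v_0}\subseteq H$. Then $\overline H=H\cup M$ is convex in $X_T$, since a geodesic with endpoints in $\overline H$ meets the convex set $M$ in a connected set and so cannot pass to the far side of $M$ and return, and moreover $\xi\in\boundary\overline H$. Hence $c$ lies in $\overline H$. But any component $C_{u'}$ with $u'\ne v$ adjacent to $w$ lies strictly on the far side of $M$, so $c$ cannot eventually remain there. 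Therefore $u=v$, and the auxiliary claim gives that the itinerary of $\xi$ at $x_0$ is $[v_0,v]$.

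The only genuine obstacle in the whole argument is the separation-and-convexity step used to pin down $u=v$ in part~(2): combinatorial information alone does not distinguish among the component-neighbours of $w$, so one really must use the geometry of $X_T$ --- namely that the mid-flat $M$ of the relevant edge strip is asymptotic to $F_w$, hence has $\xi$ in its boundary, and that the near side $\overline H$ of $M$ is a convex set containing $x_0$, which traps the ray $c$. Parts~(1) and~(2), and the auxiliary claim itself, are then routine applications of Lemmas~\ref{lem:idealpts}, \ref{lem:dichotomy}, and~\ref{lem:treelike}.
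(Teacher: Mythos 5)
Your proof is correct and follows the same overall strategy as the paper's: reduce to identifying which component $c$ eventually remains in, using Lemmas~\ref{lem:dichotomy} and~\ref{lem:treelike} to narrow the candidates, and then read off the itinerary. The one place where you do more than the paper is part~(2): the paper disposes of the possibility that $c$ crosses to the far side of $F_w$ with the terse remark that ``$\xi$ would fail to be asymptotic to $F_w$,'' whereas your mid-flat convexity/separation argument (that $\overline{H}=H\cup M$ is closed, convex, contains $x_0$, and has $\xi$ in its visual boundary, so it traps $c$) supplies an explicit justification for that step --- a welcome addition, though not a different route.
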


\begin{proof}
In case (\ref{item:FI-comp}), it follows from Lemma~\ref{lem:treelike} that $C_v$ is the unique component whose boundary contains $\xi$.
The proof of Lemma~\ref{lem:dichotomy} implies that $\xi$ eventually remains in $C_v$, and the itinerary of $\xi$ at $x_0$ equals $[v_0,v]$ as desired.

If $\xi \in \boundary F_w$ then $\xi$ also lies in $\boundary C_v$ for all $v$ adjacent to $w$ in $T$. Furthermore, $\xi$ does not lie in $\boundary C'$ for any other component $C'$.
The component that $\xi$ eventually remains in must therefore be adjacent to $F_w$.
Observe that $\xi$ cannot enter any edge space $F_e \times [0,1]$ adjacent (and parallel) to $F_w$ since then $\xi$ would fail to be asymptotic to $F_w$.
\end{proof}

\begin{lem}
\label{lem:smallbranch}
Let $\Psi(T_e)$ be a branch of $\boundary X_T$ determined by an oriented edge $e \in \mathcal{E}$.
Let $x_0\in X_T$ be a basepoint contained in $C_v$ for some vertex $v \notin T_e$.
Let $\kappa$ be the constant from Theorem~\ref{thm:facts}.
If $d(x_0,F_e)\ge r + 3\kappa$,
then there exists a geodesic ray $c$ based at $x_0$
such that $\Psi(T_e) \subseteq U(c,r,7\kappa)$.
\end{lem}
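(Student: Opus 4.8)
The plan is to mimic the argument of Section~\ref{sec:RankOneLocalCon} showing that the spheres $\boundary F$ form a null family, but applied to the larger sets $\Psi(T_e)$. Assume without loss of generality that $\kappa\ge 1$. Let $z$ be the point of $F_e$ nearest $x_0$, so that $d(x_0,z)=d(x_0,F_e)\ge r+3\kappa$, and for each $\xi\in\Psi(T_e)$ let $c_\xi$ denote the unique geodesic ray in $X_T$ from $x_0$ to $\xi$. Fix any $\xi_0\in\Psi(T_e)$ and set $c=c_{\xi_0}$. Since $\pi_r(\xi)=c_\xi(r)$ for every $\xi\in\boundary X_T$, it suffices to prove that for each $\xi\in\Psi(T_e)$ there is a parameter $t_\xi>r$ with $d\bigl(c_\xi(t_\xi),z\bigr)<3\kappa$. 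Granting this for $\xi$ and for $\xi_0$: the rays $c_\xi$ and $c_{\xi_0}$ both issue from $x_0$ and come within $6\kappa$ of each other at parameters $t_\xi,t_{\xi_0}\ge r$, so the Euclidean law of cosines applied in the comparison triangle for the geodesic triangle $x_0,\,c_\xi(t_\xi),\,c_{\xi_0}(t_{\xi_0})$---the same estimate used in Section~\ref{sec:RankOneLocalCon}---gives $d\bigl(c_\xi(r),c(r)\bigr)<6\kappa$. Hence $\Psi(T_e)\subseteq U(c,r,6\kappa)\subseteq U(c,r,7\kappa)$.

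To prove the displayed claim I would first establish the geometric fact that, for every $\xi\in\Psi(T_e)$, the ray $c_\xi$ comes within distance $\kappa$ of the flat $F_e$ at some parameter $\ge d(x_0,F_e)-\kappa$. There are two cases. If $\xi\in\boundary F_e$, then $c_\xi$ is a limit of geodesic segments from $x_0$ to points of $F_e$; each such segment meets $\ball{z}{\kappa}$ by Theorem~\ref{thm:facts}, and since the parameters at which this happens are bounded by $d(x_0,z)+\kappa$, a limiting argument places a point of $c_\xi$ in $\cball{z}{\kappa}$, exactly as in the null-family argument of Section~\ref{sec:RankOneLocalCon}. If instead $\xi\in\Psi(T_e)\setminus\boundary F_e$, I would use the itinerary results of Lemmas~\ref{lem:idealpts}, \ref{lem:dichotomy}, \ref{lem:FiniteItineraries} and~\ref{lem:treelike}, together with the construction of the tree system and the bijection $\rho$ of Proposition~\ref{prop:rhoProperties}, to show that the itinerary of $\xi$ at $x_0$ must contain the edge $e$. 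The point is that a point of $\Psi(T_e)$ whose itinerary avoided $e$ would eventually lie in $\boundary C_u$ or $\boundary F_w$ for some vertices $u,w$ on the $x_0$--side of the edge space $F_e\times[0,1]$; since $e$ is the only edge of $T$ joining $T_e$ to its complement, Lemma~\ref{lem:treelike} then forces such a point to lie in $\boundary F_e$, contrary to hypothesis. Because the itinerary of $\xi$ contains $e$, the ray $c_\xi$ enters the edge space $F_e\times[0,1]$, hence comes within distance $1\le\kappa$ of $F_e$, and it does so at a parameter $\ge d(x_0,F_e)-1\ge d(x_0,F_e)-\kappa$.

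Given the geometric fact, I would deduce the displayed claim following the pattern of the proof of Theorem~\ref{thm:facts}. Let $p_\xi=c_\xi(\sigma_\xi)$ be a point of $c_\xi$ with $d(p_\xi,F_e)\le\kappa$ and $\sigma_\xi\ge d(x_0,F_e)-\kappa$, and let $q_\xi\in F_e$ realize $d(p_\xi,F_e)$, so that $\bigl|\sigma_\xi-d(x_0,q_\xi)\bigr|\le\kappa$. By Theorem~\ref{thm:facts} the geodesic $[x_0,q_\xi]$ meets $\ball{z}{\kappa}$, say at a point $y_\xi$ at parameter $\tau_\xi\le d(x_0,q_\xi)$. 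The geodesics $[x_0,q_\xi]$ and $c_\xi$ issue from the common point $x_0$, and their endpoints $q_\xi$ and $c_\xi\bigl(d(x_0,q_\xi)\bigr)$ lie within $2\kappa$ of each other; by convexity of the metric on $X_T$ they remain within $2\kappa$ of each other on $\bigl[0,d(x_0,q_\xi)\bigr]$, whence $d\bigl(c_\xi(\tau_\xi),y_\xi\bigr)\le 2\kappa$ and therefore $d\bigl(c_\xi(\tau_\xi),z\bigr)<3\kappa$. Finally, $\tau_\xi=d\bigl(x_0,c_\xi(\tau_\xi)\bigr)\ge d(x_0,z)-3\kappa>r$, using $d(x_0,z)\ge r+3\kappa$; so $t_\xi=\tau_\xi$ works, completing the claim and the lemma.

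The one genuinely delicate step is the structural assertion in the second case above---that a point of $\Psi(T_e)$ not lying on $\boundary F_e$ has an itinerary through $e$. This is a bookkeeping argument about the tree system and the map $\rho$, but it requires coordinating several of the preceding itinerary lemmas and a careful reading of which vertices of $T$ lie in the branch $T_e$. Everything after that point is a $\CAT(0)$ comparison of the type already carried out in Sections~\ref{sec:IsolatedFlats} and~\ref{sec:RankOneLocalCon}.
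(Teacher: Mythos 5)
Your proposal is correct and follows essentially the same path as the paper: identify the nearest point $z=q\in F_e$ to $x_0$, use Theorem~\ref{thm:facts} to place a point of each ray near $q$ at a large enough parameter, apply the CAT(0) law-of-cosines comparison to bound $d\bigl(c_\xi(r),c(r)\bigr)$, and invoke the itinerary lemmas to cover all $\xi\in\Psi(T_e)$. One place you over-engineer is the case $\xi\in\Psi(T_e)\setminus\boundary F_e$: since the itinerary contains $e$, the ray $c_\xi$ must actually \emph{cross} the separating flat $F_e\times\{1/2\}$ rather than merely come within distance~$1$ of it, so you can apply Theorem~\ref{thm:facts} directly to $c_\xi$ (as the paper does) and skip the nearest-point-projection-plus-convexity detour with $p_\xi,q_\xi,y_\xi$; this removes the need for the $\kappa\ge 1$ normalization as well.
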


\begin{proof} 
Let $q$ be the nearest point in $F_e$ to $x_0$.  Then $d(x_0,q) \ge r+3\kappa$.  Let $c,c'$ be geodesic rays based at $x_0$ that both intersect $F_e$. By Theorem~\ref{thm:facts} the set $[x_0,q] \cup F_e$ is $\kappa$--quasiconvex.  Thus there exist $s,s'$ with $c(s),c'(s')$ both contained in $\bignbd{[x_0,q]}{\kappa}\cap \nbd{F_e}{\kappa}$.  It follows that $d\bigl(c(s),q\bigr)$ and $d\bigl(c'(s'),q\bigr)$ are each less than $3\kappa$.  Thus $d\bigl(c(s),c'(s')\bigr) < 6\kappa$.  By the Law of Cosines $d\bigl(c(r),c'(r)\bigr)<6\kappa$.  In particular $c' \in U(c,r,6\kappa)$.

If $c$ intersects $F_e$ and $c'$ is asymptotic to $F_e$ (but does not intersect $F_e$), then $c'$ is a limit of geodesics that intersect $F_e$.  In this case, we conclude that $c' \in U(c,r,7\kappa)$.

By Lemma~\ref{lem:FiniteItineraries} each ray based at $x_0$ and asymptotic to $\Psi(T_e)$ has an itinerary involving $F_e$---and hence intersects $F_e$---unless the ray is asymptotic to $F_e$ itself.  In all cases we see that $\Psi(T_e) \subseteq U(c,r,7\kappa)$ for any $c$ crossing $F_e$.
\end{proof}

\begin{proof}[Proof of Proposition~\ref{prop:nullcondition}]  Let $v\in \mathcal C$.
Choose a basepoint $x_0 \in C_v$.
Let $D=7\kappa$ where $\kappa$ is as in the previous lemma and let $r<\infty$. Let $\Psi(T_e)$ be a branch such that $e$ points away from $v$ and such that $\Psi(T_e)$ is not contained in any set of the form $U(\cdot,r,D)$.  Applying Lemma~\ref{lem:smallbranch} in the contrapositive implies $d(x_0,F_e) < r+3\kappa$.  
Since the collection of flats $\{F_e\}$ in $X_T$  is locally finite, there are only finitely many edges $e$ whose corresponding flat $F_e$ is that close to $x_0$.  Thus there are only finitely many possibilities for the branch $\Psi(T_e)$.
\end{proof}

\begin{prop}
\label{prop:BranchesClosed}
Branches $\Psi(T_e)$ are closed in $\boundary X_T$.
\end{prop}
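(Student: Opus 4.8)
The plan is to realize $\Psi(T_e)$ as the visual boundary of a closed convex subspace $Y_e$ of $X_T$, and then to apply the general fact that the boundary of a closed convex subspace of a proper $\CAT(0)$ space is closed in the ambient visual boundary. To define $Y_e$, write $F_e$ also for the flat $F_e\times\{1/2\}$ sitting in the middle of the edge space of $e$; this flat separates $X_T$ into two halfspaces, and let $Y_e$ be the closure of the halfspace on the $T_e$ side. Then $Y_e$ is the union of all component spaces $C_v$, flats $F_w$, and edge spaces indexed by the vertices and edges of $T_e$, together with a half of the edge space of $e$, and its topological frontier in $X_T$ is exactly the flat $F_e$. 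First I would observe that $Y_e$ is convex: the midflat $F_e$ is totally geodesic inside the edge space of $e$, and edge spaces are convex in $X_T$, so $F_e$ is convex in $X_T$; consequently a geodesic of $X_T$ with both endpoints in $Y_e$ cannot leave $Y_e$, since otherwise it would meet the frontier flat $F_e$ in two distinct points without lying in $F_e$ in between. Thus $Y_e$ is a closed convex subspace, hence itself a proper $\CAT(0)$ space.

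Next I would record the general principle: if $Y$ is a closed convex subspace of a proper $\CAT(0)$ space $Z$, then $\boundary Y$ is closed in $\boundary Z$. Fixing $y_0\in Y$, a point $\xi\in\boundary Z$ lies in $\boundary Y$ exactly when the geodesic ray $[y_0,\xi)$ lies in $Y$; so if $\xi_n\to\xi$ in $\boundary Z$ with each $\xi_n\in\boundary Y$, then the rays $[y_0,\xi_n)$ converge uniformly on compact sets to $[y_0,\xi)$, which therefore lies in the closed set $Y$, giving $\xi\in\boundary Y$. Applied to $Y_e\subseteq X_T$, this yields that $\boundary Y_e$ is closed in $\boundary X_T$.

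It then remains to identify $\Psi(T_e)=\boundary Y_e$. The inclusion $\Psi(T_e)\subseteq\boundary Y_e$ is direct: each vertex space $K_v$ with $v\in\mathcal{V}_{T_e}$ is the visual boundary of one of the pieces (a component space or a flat) constituting $Y_e$; each $i_e(\Sigma_e)$ lies in $K_{t(e)}$ with $t(e)\in\mathcal{V}_{T_e}$; and by Lemma~\ref{lem:idealpts}(\ref{item:idealpts-WellDef}) every point of $\boundary T_e$ is represented by a geodesic ray whose itinerary is a ray of $T_e$, hence by a ray that eventually lies in $Y_e$. For the reverse inclusion, fix a basepoint $x_0$ in a component $C_{v_0}$ with $v_0\notin T_e$ and take $\xi\in\boundary Y_e$; by Lemma~\ref{lem:dichotomy} either $\xi$ lies in the boundary of some component or $\xi$ has an infinite itinerary. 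In the first case $\xi$ is asymptotic to $Y_e$, so Lemmas~\ref{lem:treelike} and~\ref{lem:FiniteItineraries} show that the component or flat whose visual boundary contains $\xi$ is one of the pieces making up $Y_e$, placing $\xi$ in $\rho(\#\Theta_{T_e})$. In the second case $\xi\notin\boundary F_e$, so a geodesic ray in $Y_e$ representing $\xi$ has distance to the convex flat $F_e$ tending to infinity; since the ray from $x_0$ to $\xi$ is asymptotic to it, that ray eventually lies in $Y_e$ and hence crosses the frontier flat $F_e$, so the $x_0$-itinerary of $\xi$ is a geodesic ray of $T$ that traverses $e$ and thereafter stays inside $T_e$, determining a point of $\boundary T_e$ that maps to $\xi$ under $\rho$. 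This establishes $\Psi(T_e)=\boundary Y_e$, which is closed.

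I expect the main obstacle to be the identification $\Psi(T_e)=\boundary Y_e$, and in particular the bookkeeping --- distinguishing the two cases according to which endpoint of $e$ is the peripheral vertex --- of where the flat $F_e$ and the sphere $\boundary F_e$ sit relative to $T_e$ and to $\#\Theta_{T_e}$. The convexity of $Y_e$ in the first paragraph also warrants a careful write-up, though it is routine given that the edge spaces of $X_T$ are convex.
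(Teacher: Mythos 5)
Your proof is correct, but it takes a genuinely different route from the paper's. The paper argues directly with sequences: given geodesic rays $c_i$ from a fixed basepoint $x_0 \in C_v$ (with $v \notin T_e$) asymptotic to $\Psi(T_e)$ and converging to $c$, each $c_i$ either crosses or is asymptotic to the midflat $F_e$; after passing to a subsequence one is in one of the two regimes, and in either regime the limit $c$ again crosses or is asymptotic to $F_e$, hence lands in $\Psi(T_e)$. This is short and self-contained precisely because the supporting Lemma~\ref{lem:smallbranch} (together with the quasiconvexity constant of Theorem~\ref{thm:facts}) has already done the geometric work of showing that asymptotics to a branch force the ray through $F_e$.

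Your approach is more structural: you realize $\Psi(T_e)$ as $\boundary Y_e$ where $Y_e$ is a closed halfspace of $X_T$ bounded by the convex midflat, you verify that $Y_e$ is convex, and you invoke the general principle that the visual boundary of a closed convex subspace is closed in $\boundary X_T$. The convexity of $Y_e$ is correctly deduced from the convexity of $F_e$ (a maximal exit interval of a geodesic would have both endpoints in $F_e$, forcing that subsegment into $F_e$ by uniqueness of geodesics). The price you pay is that the identification $\Psi(T_e) = \boundary Y_e$ is itself nontrivial: it repackages essentially the same information as Lemmas~\ref{lem:dichotomy}, \ref{lem:treelike}, and~\ref{lem:FiniteItineraries} (and the bookkeeping of where $\boundary F_e$ sits relative to $T_e$ must be done in both orientations of $e$), and you are correct to flag it as the place needing the most care. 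What your route buys is a reusable object: $Y_e$ is a closed convex $G_{T_e}$-cocompact subspace realizing the branch, which gives the closedness of $\Psi(T_e)$ as a corollary of general $\CAT(0)$ geometry rather than as an ad hoc sequence argument, and in fact also gives at once that $\Psi(T_e)$ is itself the boundary of a $\CAT(0)$ space with isolated flats. The paper's proof is shorter in context because it piggybacks on lemmas already proved for Proposition~\ref{prop:nullcondition}; in isolation, the two arguments are comparable in length.
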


\begin{proof}
Let $x_0\in X_T$ be a basepoint contained in $C_v$ for some vertex $v\notin T_e$. 
Suppose $\{c_i\}$ is a sequence of geodesic rays in $X_T$ based at $x_0$ asymptotic to $\Psi(T_e)$ such that $c_i$ converges to the geodesic ray $c$ based at $x_0$.  As in the proof of Lemma~\ref{lem:smallbranch}, each such ray intersects $F_e$ or is asymptotic to $F_e$.  By passing to a subsequence we can assume all $c_i$ intersect $F_e$ or all $c_i$ are asymptotic to $F_e$.  

If each $c_i$ is asymptotic to $F_e$, then $c$ is as well, since $F_e$ is a closed convex subspace of $X_T$.
On the other hand, if each $c_i$ intersects $F_e$ then $c$ either intersects $F_e$ or is asymptotic to $F_e$ depending on whether the intersections of the $c_i$ with $F_e$ remain bounded as $i \to \infty$.  In all cases we conclude that $c$ is asymptotic to the branch $\Psi(T_e)$.
\end{proof}

Recall that $\lim \Theta$ is the inverse limit of an inverse system of spaces $K_F^* = K_F/\mathcal{A}_F$ for all finite subtrees $F$ of $T$ (see Definition~\ref{defn:TreeLimit}).  The collection $\mathcal{A}_F$ contains an edge space $i_e(\Sigma_e)$ for each edge $e \in N_F$, where $N_F$ contains all edges whose origin is outside $F$ and whose terminus is in $F$.

\begin{lem}
\label{lem:SameQuotient}
For each finite subtree $F$ of $T$, let $\boundary X_T / \mathcal{D}_F$ be the quotient of $\boundary X_T$ formed by collapsing each branch $\Psi(T_e)$ to a point whenever $\bar{e} \in N_F$.
Let $q_F\colon\boundary X\to \boundary X/\mathcal{D}_F$ denote the natural quotient map.  Then $\boundary X_T/\mathcal{D}_F$ is homeomorphic to the quotient $K^*_F = K_F/\mathcal{A}_F$. 
\end{lem}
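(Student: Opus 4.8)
The plan is to exhibit the homeomorphism directly: build a continuous bijection $\bar g\colon K_F^* \to \boundary X_T/\mathcal{D}_F$ and then upgrade it to a homeomorphism by the compact-to-Hausdorff principle. First I would check that $\mathcal{D}_F$ really is an upper semicontinuous decomposition. For each $e \in N_F$ write $B_e = \Psi(T_{\bar e})$; these are exactly the non-singleton members of $\mathcal{D}_F$. Since the subtrees $T_{\bar e}$ ($e\in N_F$) are pairwise disjoint and disjoint from $F$, and $\rho$ is a bijection by Proposition~\ref{prop:rhoProperties}, the branches $B_e$ are pairwise disjoint. Each $B_e$ is closed by Proposition~\ref{prop:BranchesClosed}, hence compact, and the family $\set{B_e}{e \in N_F}$ is null by Proposition~\ref{prop:nullcondition} (apply it with basepoint a vertex of $F$, so that every such edge points away from that vertex). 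By Proposition~\ref{prop:NullUSC} the decomposition $\mathcal{D}_F$ is upper semicontinuous, so $\boundary X_T/\mathcal{D}_F$ is compact and metrizable by Proposition~\ref{prop:Metrizable}; in particular it is Hausdorff. On the other side, $K_F = \#\Theta_F$ is compact and $\mathcal{A}_F$ is a null family of disjoint compact sets, so $K_F^* = K_F/\mathcal{A}_F$ is compact as well.

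Next I would record how $\boundary X_T$ sits over $F$. The tree $T$ is the union of $F$ with the branches $T_{\bar e}$ ($e \in N_F$), which are pairwise disjoint, each meeting $F$ only along its bridging edge, and $\boundary T = \bigcup_{e \in N_F} \boundary T_{\bar e}$ since $F$ is finite. Applying the bijection $\rho$ therefore yields
\[
   \boundary X_T \;=\; \Psi(F) \,\cup\, \bigcup_{e \in N_F} B_e ,
   \qquad
   \Psi(F) \cap B_e \;=\; \rho\bigl(i_e(\Sigma_e)\bigr),
\]
with the $B_e$ pairwise disjoint. Here $\Psi(F) = \rho(\#\Theta_F) = \rho(K_F)$, and $\rho|_{K_F}$ is a topological embedding by Proposition~\ref{prop:rhoProperties}; under it the subspace $i_e(\Sigma_e) \in \mathcal{A}_F$ of $K_F$ corresponds precisely to $\Psi(F) \cap B_e$.

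Finally I would assemble the map. Let $g = q_F \circ \rho|_{K_F}\colon K_F \to \boundary X_T/\mathcal{D}_F$, which is continuous since $\rho$ is continuous on $\#\Theta$ and $q_F$ is a quotient map. It is surjective: a point of $\boundary X_T/\mathcal{D}_F$ is either a class $[B_e]$, attained on $i_e(\Sigma_e)$, or the image of a point of $\boundary X_T - \bigcup_{e\in N_F} B_e = \Psi(F) - \bigcup_{e\in N_F}\rho\bigl(i_e(\Sigma_e)\bigr) \subseteq \rho(K_F)$. Since $g$ collapses each $i_e(\Sigma_e)$ to the single point $[B_e]$, it factors through a continuous surjection $\bar g\colon K_F^* \to \boundary X_T/\mathcal{D}_F$. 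For injectivity: if $x,y \in K_F$ satisfy $g(x)=g(y)$, then $\rho(x),\rho(y)$ lie in a common member of $\mathcal{D}_F$; if that member is a singleton then $x=y$ since $\rho$ is injective, and if it is some $B_e$ then $\rho(x),\rho(y) \in \Psi(F)\cap B_e = \rho\bigl(i_e(\Sigma_e)\bigr)$, whence $x,y \in i_e(\Sigma_e) \in \mathcal{A}_F$ and they represent the same point of $K_F^*$. Thus $\bar g$ is a continuous bijection from a compact space to a Hausdorff space, hence a homeomorphism.

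The only non-formal inputs are Propositions~\ref{prop:BranchesClosed} and~\ref{prop:nullcondition}, which guarantee that $\mathcal{D}_F$ is an upper semicontinuous decomposition and hence that the target is Hausdorff; everything else is the universal property of quotients plus the standard compact-to-Hausdorff fact. I expect the main place where care is needed to be the orientation bookkeeping—matching the index $e\in N_F$ used for $\mathcal{A}_F$ against the branch $T_{\bar e}$ collapsed in $\mathcal{D}_F$, and checking that $\rho|_{K_F}$ carries $i_e(\Sigma_e)$ onto $\Psi(F)\cap B_e$—but no genuine difficulty should arise there.
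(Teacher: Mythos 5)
Your overall strategy---construct the induced continuous bijection $\bar g \colon K_F^* \to \boundary X_T / \mathcal{D}_F$, prove that $\mathcal{D}_F$ is upper semicontinuous so the target is Hausdorff, and conclude via the compact-to-Hausdorff principle---is essentially the paper's. There is, however, a genuine gap: your claim that the branches $B_e$, $e \in N_F$, are pairwise disjoint is false.

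The flawed step is the inference ``the subtrees $T_{\bar e}$ are pairwise disjoint, and $\rho$ is a bijection, therefore the branches are pairwise disjoint.'' Disjointness of the subtrees $T_{\bar e}$ does \emph{not} pass to disjointness of the subspaces $\#\Theta_{T_{\bar e}}$ of $\#\Theta$, because of the identifications in the tree-system gluing. Concretely, suppose $F$ contains a peripheral vertex $w$ with at least two outward edges (for instance $F = \{w\}$ with $w$ of valence $\ge 2$). For two distinct edges $e, e' \in N_F$ with $t(e) = t(e') = w$, the subtrees $T_{\bar e}$ and $T_{\bar e'}$ contain the distinct component vertices $o(e)$, $o(e')$ adjacent to $w$. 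Since the peripheral space $K_w$ is glued homeomorphically into both $K_{o(e)}$ and $K_{o(e')}$, the images of those vertex spaces in $\#\Theta$ share the image of $K_w$, and hence
\[
   \Psi(T_{\bar e}) \cap \Psi(T_{\bar e'}) \supseteq \boundary F_w \ne \emptyset.
\]
(In this same situation the sets $i_e(\Sigma_e)$ and $i_{e'}(\Sigma_{e'})$ in $\mathcal{A}_F$ literally coincide with $K_w$, so $\mathcal{A}_F$ also has fewer distinct members than $|N_F|$.)

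This error propagates to two places in your argument. First, Proposition~\ref{prop:NullUSC} requires a null family of pairwise \emph{disjoint} compact sets; you apply it to $\{B_e\}$, which is not disjoint. Second, your injectivity argument treats only the case where the common member of $\mathcal{D}_F$ containing $\rho(x), \rho(y)$ is a single branch $B_e$; when it is a union of several branches over a common peripheral vertex, you must further show that $\Psi(F)$ meets the union only in the sphere $\boundary F_w = \rho(K_w)$, which is (the common value of) the corresponding sets $i_e(\Sigma_e) \in \mathcal{A}_F$.

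The paper handles this point explicitly: two branches intersect precisely when their defining edges share a common peripheral origin in $F$, and then they meet exactly in the peripheral sphere. Consequently each nontrivial member of $\mathcal{D}_F$ is a \emph{finite union} of branches with common peripheral origin; these unions \emph{are} pairwise disjoint, closed, compact, and form a null family (each is a finite union since peripheral vertices have finite valence). Proposition~\ref{prop:NullUSC} is then applied to $\mathcal{D}_F$ itself rather than to $\{B_e\}$. Without this observation your proof of upper semicontinuity does not go through.
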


\begin{proof}
The embedding $\rho \colon K_F \to \boundary X_T$ induces a continuous map $g_F \colon K_F^* \to \boundary X_T / \mathcal{D}_F$, which is clearly a bijection.  It suffices to verify that $g_F$ is a closed map.
We will show that the decomposition $\mathcal{D}_F$ is upper semicontinuous.  It then follows immediately from Proposition~\ref{prop:Metrizable} that $\boundary X_T / \mathcal{D}_F$ is Hausdorff, and since $K_F^*$ is compact, we can conclude that $g_F$ is closed. 

By Proposition~\ref{prop:rhoProperties}(\ref{item:rhoBijection}), two branches $\Psi(T_e)$ and $\Psi(T_{e'})$ with $\bar{e},\bar{e}' \in N_F$ intersect precisely when their origin vertices $o(e)$ and $o(e')$ are peripheral vertices (lying in $\mathcal{P}$) and are equal.  In this case the intersection of the branches is the peripheral vertex space $\Sigma_e = K_{o(e)} = \Sigma_{e'}$.  Therefore each nontrivial member of the decomposition $\mathcal{D}_F$ is the union of finitely many branches whose defining edges have a common origin.

By Proposition~\ref{prop:nullcondition} the branches being collapsed are a null family.  Therefore $\mathcal{D}_F$ is also null.  Similarly by Proposition~\ref{prop:BranchesClosed} we see that the members of $\mathcal{D}_F$ are closed.  Therefore $\mathcal{D}_F$ is upper semicontinuous by Proposition~\ref{prop:NullUSC}.
\end{proof}

\begin{prop}
\label{prop:TreeDecomposition}
Suppose $G$ is a one-ended group acting geometrically on a $\CAT(0)$ space $X$ with isolated flats.
Suppose the maximal peripheral splitting of $G$ is locally finite.
Then the boundary $\boundary X$ is homeomorphic to the limit $\lim\Theta$ of the associated tree system.
\end{prop}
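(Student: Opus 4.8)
The plan is to work throughout in the space $X_T$ of Construction~\ref{const:TheTreeSystem}, since $\boundary X$ and $\boundary X_T$ are $G$--equivariantly homeomorphic by Theorem~\ref{thm:WellDefinedBoundary}, and to assemble the facts established in this section into an explicit homeomorphism $\boundary X_T \to \lim\Theta$. For each finite subtree $F$ of $T$, Lemma~\ref{lem:SameQuotient} gives a homeomorphism $\boundary X_T/\mathcal{D}_F \cong K_F^*$; precomposing with the quotient map yields a continuous surjection $\tilde q_F\colon \boundary X_T \to K_F^*$. First I would check that the $\tilde q_F$ are compatible with the bonding maps of the inverse system of Definition~\ref{defn:TreeLimit}, i.e.\ that $f_{F_1F_2}\of \tilde q_{F_2} = \tilde q_{F_1}$ whenever $F_1 \subseteq F_2$. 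This is routine: one verifies that $\mathcal{D}_{F_2}$ refines $\mathcal{D}_{F_1}$ (every branch $\Psi(T_e)$ with $\bar{e}\in N_{F_2}$ is contained in a branch $\Psi(T_{e'})$ with $\bar{e}'\in N_{F_1}$, obtained by following $e$ back to the first edge entering $F_1$), and that the extra collapsing performed by $f_{F_1F_2}$ — of the vertex spaces $K_s$ for $s\in\mathcal V_{F_2}\setminus\mathcal V_{F_1}$ — exactly accounts for the difference between the two decompositions. The universal property of the inverse limit then produces a continuous map $\Phi\colon \boundary X_T \to \lim\Theta$ with $\pi_F\of\Phi = \tilde q_F$ for every $F$.

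Since $\boundary X_T$ is compact and $\lim\Theta$ is compact and metrizable, it then suffices to prove that $\Phi$ is a bijection, as a continuous bijection of a compact space onto a Hausdorff space is a homeomorphism. For surjectivity, let $(\bar{\xi}_F)_F \in \lim\Theta$. Each preimage $\tilde q_F^{-1}(\bar{\xi}_F)$ is a single member of $\mathcal{D}_F$, hence a nonempty set that is closed in $\boundary X_T$ by Proposition~\ref{prop:BranchesClosed}. Compatibility of the coordinates $\bar{\xi}_F$ with the bonding maps, together with $\tilde q_{F_1}=f_{F_1F_2}\of\tilde q_{F_2}$, gives $\tilde q_{F_2}^{-1}(\bar{\xi}_{F_2}) \subseteq \tilde q_{F_1}^{-1}(\bar{\xi}_{F_1})$ for $F_1\subseteq F_2$, so these sets form a downward--directed family of nonempty closed sets in a compact space, indexed by the directed poset of finite subtrees of $T$. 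Their intersection is therefore nonempty, and any point in it is carried by $\Phi$ to $(\bar{\xi}_F)_F$.

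The remaining and least formal point is injectivity: given $\xi\ne\xi'$ in $\boundary X_T$ I must exhibit a finite subtree $F$ with $\tilde q_F(\xi)\ne\tilde q_F(\xi')$. Using the bijection $\rho$ of Proposition~\ref{prop:rhoProperties} I would split into cases according to the dichotomy of Lemma~\ref{lem:dichotomy}: whether $\rho^{-1}(\xi)$ lies in a vertex space of $\#\Theta$ (equivalently, $\xi$ has a finite itinerary) or in $\boundary T$ (an infinite itinerary). In the first case, say $\xi$ lies in $K_v$, with itinerary $[v_0,v]$ by Lemma~\ref{lem:FiniteItineraries}; I claim one may choose a finite subtree $F$ for which the $\mathcal{D}_F$--class of $\xi$ is the singleton $\{\xi\}$. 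If $\xi$ is not on the boundary of any flat, $F = [v_0,v]$ already works, since then $\xi$ meets no collapsed branch; if $\xi\in\boundary F_w$ for an adjacent peripheral vertex $w$, one enlarges $F$ to contain $w$ together with \emph{all} of its neighbours, a finite set precisely because the maximal peripheral splitting is locally finite, and then no branch $\Psi(T_e)$ collapsed by $\mathcal{D}_F$ can contain a point of $K_w$, since that would force $w\in T_e$, contradicting $T_e\cap F=\emptyset$. In the second case, if both $\xi$ and $\xi'$ have infinite itineraries, then $\rho^{-1}(\xi)$ and $\rho^{-1}(\xi')$ are distinct ends of $T$; fixing a basepoint vertex and taking $F$ to be the segment along which the two geodesic rays agree, extended by the first edge of each ray past their branch point and the two resulting distinct vertices $a\ne b$, the points $\xi$ and $\xi'$ lie in branches $\Psi(T_e),\Psi(T_{e'})$ with $o(e)=a\ne b=o(e')$, hence in distinct members of $\mathcal{D}_F$ by the description of $\mathcal{D}_F$ in the proof of Lemma~\ref{lem:SameQuotient}; and a mixed pair is handled by the first case. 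I expect this injectivity bookkeeping — in particular the verification that the decompositions $\mathcal{D}_F$ separate points, which is exactly the step that invokes local finiteness of the peripheral splitting — to be the main obstacle, the rest of the argument being a formal consequence of Propositions~\ref{prop:rhoProperties}, \ref{prop:nullcondition}, \ref{prop:BranchesClosed} and Lemma~\ref{lem:SameQuotient}.
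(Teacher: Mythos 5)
Your argument is correct and follows essentially the same route as the paper: both construct the map $h$ (your $\Phi$) from the compatible family $g_F^{-1}\of q_F$, verify bijectivity, and conclude via compactness-plus-Hausdorff. The only cosmetic differences are that you establish surjectivity by intersecting a nested family of nonempty closed fibers rather than citing a general inverse-limit fact, and you organize injectivity into two cases (isolating any finite-itinerary point by a singleton $\mathcal{D}_F$--class, then handling the ideal/ideal pair) where the paper uses three; both versions invoke local finiteness at precisely the point where a peripheral vertex together with all its neighbors must fit inside a finite subtree.
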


\begin{proof}
By Theorem~\ref{thm:WellDefinedBoundary}, the boundary $\boundary X$ is $G$--equivariantly homeomorphic to the boundary $\boundary X_T$ constructed above.
In order to prove that $\boundary X_T$ is homeomorphic to $\lim\Theta$ we will define maps $h_F$ from $\boundary X_T$ onto each $K_F^*$ in the inverse system, and show that the induced map $h\colon \boundary X_T \to \lim\Theta$ is a homeomorphism.

The map $h_F$ is the composition $g_F^{-1}\of q_F$, where $g_F$ is the homeomorphism defined in Lemma~\ref{lem:SameQuotient} and $q_F$ is the natural quotient map $\boundary X_T \to \boundary X_T / \mathcal{D}_F$.
Since $f_{F_1 F_2} \of h_{F_2} = h_{F_1}$ whenever $F_1 \subseteq F_2$, the maps $h_F$ induce a continuous map $h \colon \boundary X_T \to \lim \Theta$.

Observe that $\boundary X_T$ is compact, each quotient space $K_F^*$ is Hausdorff, and $h_F$ is surjective for each finite subtree $F$.  It follows that $h$ is surjective (see for instance \S I.9.6, Corollary~2(b) of \cite{BourbakiTG}).  Since $\lim\Theta$ is Hausdorff, the map $h$ is closed.

Thus we only need to show that $h$ is injective.
Suppose $\xi \ne \eta$ are two distinct points of $\boundary X_T$.
Recall that by Proposition~\ref{prop:rhoProperties}(\ref{item:rhoBijection}) each point of $\boundary X_T$ is either contained in a block boundary $K_v$ for some $v \in \mathcal{C}$ or is equal to an ideal point $\rho(z)$ for some $z \in \boundary T$.  We also know that the map $\rho\colon \boundary T \to \boundary X_T$ from ends of the tree to ideal points is injective.

Case~1: Suppose $\xi$ and $\eta$ are both contained in block boundaries.
Recall that each point of $\boundary X_T$ lies in at most finitely many block boundaries.
Choose a finite subtree $F$ that contains all block boundaries $K_v$ that contain either $\xi$ or $\eta$.
Then $\xi$ and $\eta$ are contained in the partial union $K_F$, but neither $\xi$ nor $\eta$ is contained in any subspace $i_e(\Sigma_e)$ with ${e} \in N_F$.
It follows that $\xi$ and $\eta$ have distinct images $h_F(\xi)$ and $h_F(\eta)$ in the quotient $K_F^*$.

Case~2: Suppose $\xi$ and $\eta$ are equal to distinct ideal points $\rho(z)$ and $\rho(z')$ with $z\ne z'$ in $\boundary T$.  Let $c$ be the geodesic in the tree $T$ from $z$ to $z'$.  Choose any vertex $v \in \mathcal{C}$ of $c$, and let $F$ be the finite subtree $\{v\}$. Then $K_F=K_v$ consists of only one block boundary.  Since $c$ is a geodesic, there are distinct edges $e\ne e'$ with $o(e)=o(e')=v$ such that $\xi$ lies in the branch $\Psi(T_e)$ and $\eta$ lies in the branch $\Psi(T_{e'})$.
Therefore $\xi$ and $\eta$ have distinct images $h_F(\xi)$ and $h_F(\eta)$ in the quotient $K_F^*$.

Case~3: Suppose $\xi$ is contained in a block boundary and $\eta$ is an ideal point $\rho(z)$ for $z \in \boundary T$.
Choose a finite subtree $F$ containing all of the finitely many blocks $K_v$ that contain $\xi$.
Then $\xi$ lies in the partial union $K_F$ but is not contained in any branch $\Psi(T_e)$ with $\bar{e} \in N_F$.
On the other hand, $\eta$ is not contained in $K_F$, so it must lie in some branch $\Psi(T_e)$ with $\bar{e} \in N_F$.  It follows that $\xi$ and $\eta$ have distinct images $q_F(\xi)$ and $q_F(\eta)$ in the quotient $\boundary X_T / \mathcal{D}_F$.  Consequently their images in $K_F^*$ are distinct as well.
\end{proof}

We now use Proposition~\ref{prop:TreeDecomposition} to complete the proof of Theorem~\ref{thm:MainThm}.

\begin{proof}[Proof of Theorem~\ref{thm:MainThm}]
The reverse implication follows from work of Mihalik--Ruane \cite{MihalikRuane99,MihalikRuane01}.

We prove the forward implication using the contrapositive; i.e., if $G$ does not split over a virtually abelian subgroup as in the statement of Theorem~\ref{thm:MainThm}, we must show that $\boundary X$ is locally connected.

Let $\mathcal{G}$ be the maximal peripheral splitting of $G$ given by Theorem~\ref{thm:Bowditch}.  By hypothesis, this splitting is locally finite.
Proposition~\ref{prop:TreeDecomposition} implies that $\boundary X$ is homeomorphic to the limit of the associated tree system.
The trivial case in which $G$ is virtually abelian of higher rank is obvious since the boundary is a sphere in that case.
In all other cases the tree system is nondegenerate.
By Corollary~\ref{cor:ComponentCAT0} each component vertex group is an atomic $\CAT(0)$ group with isolated flats.
By Theorem~\ref{thm:UnsplittableLC} each component vertex space is connected and locally connected.
Each peripheral vertex space is a sphere of dimension at least one, hence is connected.
Therefore we may apply Theorem~\ref{thm:TreeLimLC} to conclude that $\boundary X$ is locally connected, as desired.
\end{proof}

\bibliographystyle{alpha}
\bibliography{chruska.bib}

\end{document}